%

\input ./style/arxiv-general.cfg
\documentclass[aop,MSNbibl,seceqn,dvips]{arximspdf}
\makeatletter
   \@ifpackageloaded{graphicx}{}{\usepackage{graphicx}}
\makeatother
\usepackage{mathbh}

%

\doi{10.1214/14-AOP988}
\volume{44}
\issue{1}
\pubyear{2016}
\firstpage{739}
\lastpage{805}
\docsubty{FLA}

\makeatletter
\newcommand{\rrvert}{\vert}
\newcommand{\rrVert}{\Vert}
\newcommand{\llvert}{\vert}
\newcommand{\llVert}{\Vert}
\renewcommand{\mid}{|}
\newcommand{\R}{\mathbb{R}}
\newcommand{\Z}{\mathbb{Z}}
\newcommand{\N}{\mathbb{N}}
\newcommand{\Exp}{\operatorname{Exp}}
\newcommand{\Sfl}{\mathcal{S}_{1}}
\renewcommand{\d}{\mathrm{d}}
\newtheorem{teo}{Theorem}[section]
\newproclaim{defn}[teo]{Definition}
\newtheorem{lem}[teo]{Lemma}
\newtheorem{prop}[teo]{Proposition}
\newtheorem{cor}[teo]{Corollary}
\newproclaim{rem}[teo]{Remark}
\makeatother

\begin{document}
\begin{frontmatter}

\title{Behavior near the extinction time in self-similar fragmentations II: Finite dislocation measures}
\runtitle{Behavior near the extinction time in fragmentations}

\begin{aug}
\author[A]{\fnms{Christina}~\snm{Goldschmidt}\thanksref{T1}\ead[label=e1]{goldschm@stats.ox.ac.uk}}
\and
\author[B]{\fnms{B\'en\'edicte}~\snm{Haas}\corref{}\thanksref{T2}\ead[label=e2]{haas@ceremade.dauphine.fr}}
\runauthor{C. Goldschmidt and B. Haas}
\affiliation{University of Oxford and Universit\'e Paris-Dauphine}
\address[A]{Department of Statistics\\
\quad and Lady Margaret Hall\\
University of Oxford\\
1 South Parks Road\\
Oxford\\
OX1 3TG\\
United Kingdom\\
\printead{e1}}
\address[B]{CEREMADE\\
Universit\'e Paris-Dauphine\\
Place du Mar\'echal De Lattre De Tassigny\\
75016 Paris\\
France\\
\printead{e2}}
\end{aug}
\thankstext{T1}{Supported in part by EPSRC Postdoctoral Fellowship
EP/D065755/1.}
\thankstext{T2}{Supported in part by ANR-08-BLAN-0190 and ANR-08-BLAN-0220-01.}

%
\received{\smonth{9} \syear{2013}}
%
\revised{\smonth{9} \syear{2014}}

%
\begin{abstract}
We study a Markovian model for the random fragmentation of an object.
At each time, the state consists of a collection of blocks. Each block
waits an exponential amount of
time with parameter given by its size to some power~$\alpha$,
independently of the other blocks. Every block then splits randomly
into sub-blocks whose
relative sizes are distributed according to the so-called dislocation
measure. We focus here on
the case where $\alpha< 0$. In this case, small blocks split
intensively, and so the whole
state is reduced to ``dust'' in a finite time, almost surely (we call this
the extinction time). In this paper, we investigate how the
fragmentation process
behaves as it approaches its extinction time. In particular, we prove a
scaling limit for the block sizes which, as a direct consequence, gives
us an expression for an invariant measure for the fragmentation
process. In an earlier paper 
[\textit{Ann. Inst. Henri Poincar\'e Probab. Stat.} \textbf{46} (2010)  338--368], we
considered the same problem for another family of fragmentation
processes, the so-called \emph{stable fragmentations}. The results
here are similar, but we emphasize that the methods used to prove them
are different. Our approach in the present paper is based on Markov
renewal theory and involves a somewhat unusual ``spine'' decomposition
for the fragmentation, which may be of independent interest.
\end{abstract}

%
\begin{keyword}[class=AMS]
\kwd{60J25}
\kwd{60G18}
\end{keyword}
\begin{keyword}
\kwd{Self-similar fragmentations}
\kwd{extinction time}
\kwd{scaling limits}
\kwd{invariant measure}
\kwd{Markov renewal theory}
\kwd{spine decomposition}
\end{keyword}
\end{frontmatter}

\tableofcontents

\section{Introduction and main results}

We consider a Markovian model for the random fragmentation of a
collection of blocks of some material, where the manner in which the
fragmentation occurs is controlled solely by the masses of the blocks.
More specifically, suppose that the current state consists of blocks of
masses $m_1, m_2, \ldots$ which are such that (for definiteness)
$\mathbf{m} = (m_1, m_2, \ldots)$ belongs to the state-space
\[
\mathcal{S}:= \Biggl\{ \mathbf s = (s_1, s_2, \ldots)\dvtx
s_1 \ge s_2 \ge\cdots\ge0, \sum
_{i=1}^{\infty} s_i < \infty\Biggr\},
\]
which is endowed with the $\ell_1$-distance
\[
d\bigl(\mathbf{s}, \mathbf{s}'\bigr) = \bigl\llVert\mathbf{s} -
\mathbf{s}' \bigr\rrVert_1:= \sum
_{i \ge1} \bigl\llvert s_i - s_i'
\bigr\rrvert\qquad\mbox{for } \mathbf s, \mathbf s' \in\mathcal S.
\]
The transition mechanism depends on two parameters: a real number
$\alpha$ and a probability measure $\nu$ on $\mathcal{S}_1:=
\{\mathbf{s} \in\mathcal{S}\dvtx  \llVert \mathbf{s}\rrVert _1 = 1 \}$, and
can be described as follows. The different blocks evolve independently.
For $i \ge1$, block $i$ splits after an exponential time of mean
$m_i^{-\alpha}$ into sub-blocks of masses $m_i \mathbf{S}$, where the
random sequence $\mathbf{S} = (S_1, S_2, \ldots)$ is distributed
according to $\nu$. To avoid ``phantom'' fragmentation events, we will
always assume that $\nu(\mathbf{1}) = 0$, where the state $\mathbf
{1} = (1,0, \ldots)$ consists of a single block of mass 1. We will
then write
\[
F(t) = \bigl(F_1(t), F_2(t), \ldots\bigr) \in\mathcal{S}
\]
for the state of the fragmentation process at time $t$, and \mbox{$\mathbb
{P}_{\mathbf{s}}$} for the law of \mbox{$(F(t), t \ge0)$} started from a
state $\mathbf{s} \in\mathcal{S}$. By default, we will start our
processes from the state $\mathbf{1}$, and we will write $\mathbb{P}$
instead of $\mathbb{P}_{\mathbf{1}}$. Whenever we write $(F(t), t\ge
0)$ without making explicit reference to its law, we implicitly assume
$F(0) = \mathbf{1}$. It is clear that (whatever its starting point)
$(F(t), t \ge0)$ is a transient Markov process with a single absorbing
state at $\mathbf{0} = (0, 0, \ldots)$.

This model described in the previous paragraph is a \emph{self-similar
fragmentation process}, as introduced by Filippov in \cite{Filippov}
and Bertoin in \cite{BertoinHom,BertoinSSF}. We refer to the second
pair of papers for a rigorous construction based on Poisson point
processes. This construction gives a version of the fragmentation which
is c\`adl\`ag for the topology of pointwise convergence. Proposition~1.9 of \cite{BertoinBook} shows, in addition, that the sum of the
masses of the blocks is a continuous function almost surely. Hence,
there exists a c\`adl\`ag version of the fragmentation for the $\ell
_1$-distance, which is the version we will always consider in this
paper. More precisely, $(F(t), t \ge0)$ is a c\`adl\`ag strong Markov
process which possesses the following self-similarity property:
\[
\begin{tabular}{p{306pt}}
$(F(t), t \ge0)$ has the same distribution under
$\mathbb {P}_{m \mathbf{1}}$ as  $(m F(m^{\alpha} t), t
\ge0)$ has under $\mathbb{P}_{\mathbf{1}}$
\end{tabular}
\]
(we will revisit a stronger version of this property in
Proposition~\ref{strongMarkov} below). Consequently, the parameter
$\alpha$ is known as the \emph{index of self-similarity}. The
probability measure $\nu$ is called the \emph{dislocation measure}.
In \cite{BertoinHom,BertoinSSF}, Bertoin constructs a more general
class of processes in which $\nu$ is allowed to be an infinite (but
$\sigma$-finite) measure satisfying a certain integrability condition;
roughly speaking, these processes are allowed to jump at a dense set of
times. He also allows dislocation measures which do not preserve the
original mass, and the possibility of deterministic erosion of the
block masses, but we will not consider any of these variants further here.

Henceforth, we will restrict our attention to the case $\alpha< 0$. In
this case, smaller blocks split (on average) faster than larger ones.
Despite the fact that each splitting event preserves the total mass
present in the system, the fragmentation exhibits the striking
phenomenon of \emph{loss of mass}, whereby splitting events accumulate
in such a way that blocks are reduced in finite time to blocks of mass
0 (known as \emph{dust}). This is reflected by the fact that the total
mass $M(t)=\sum_{i \geq1} F_i(t)$ decreases as time passes [so that
the dust has mass $1-M(t)$]. Moreover, if we define the \emph
{extinction time},
\[
\zeta= \inf\bigl\{t \ge0\dvtx  F(t) = \mathbf{0}\bigr\},
\]
then $\zeta< \infty$ almost surely; see \cite{BertoinAB}. The manner
in which mass is lost has been studied in detail by Bertoin~\cite
{BertoinAB} and Haas \cite{HaasLossMass,HaasRegularity}. Our focus
here is different: we aim to understand the behavior of the
fragmentation process close to its extinction time.

In most of the sequel, we will impose a further condition on the
dislocation measure $\nu$: we will require it to be \emph{nongeometric}. That is, for any $r \in(0,1)$, we have
\[
\nu\bigl( s_i \in r^{\N} \cup\{0\}, \forall i \ge1 \bigr)
< 1
\]
(where $\N:= \{1,2,\ldots\}$). Fragmentations with geometric
dislocation measures behave in a genuinely different way to their
nongeometric counterparts; we will discuss this difference further
below.\vspace*{1pt} For technical reasons, we will also need to \mbox{impose} the condition
that $\int_{\mathcal S_1}s_1^{-1-\rho}\nu(\mathrm d \mathbf
s)<\infty$ for some $\rho>0$. This assumption is not very
restrictive: for example, it is always satisfied for fragmentations
where blocks split into at most $N$ sub-blocks ($N$ being fixed) since
then $s_1+\cdots+s_N=1$, and so the largest mass $s_1$ is bounded
below by $1/N$ $\nu$-a.s.

We consider the usual Skorokhod topology on the space of c\`adl\`ag
functions $f\dvtx  [0,\infty) \to\mathcal{S}$.
By convention, we will set $F(t) = \mathbf{1}$ for $t < 0$. Our
principal result is then the following theorem.

\begin{teo} \label{teomainabstract}
Suppose that $\nu$ is nongeometric and that $\int_{\Sfl}
s_1^{-1-\rho} \nu(\d\mathbf s) < \infty$ for some $\rho>0$. Then
there exists $C_{\infty}$, a c\`adl\`ag $\mathcal{S}$-valued
self-similar process independent of $\zeta$, such that
\[
\bigl(\varepsilon^{1/\alpha} \bigl(F\bigl( (\zeta- \varepsilon t)-\bigr), t
\geq0
\bigr),\zeta\bigr) \stackrel{\mathit{law}} {\rightarrow} \bigl(
\bigl(C_{\infty}(t), t \geq0 \bigr),\zeta\bigr).
\]
Moreover, $C_{\infty}(0)=\mathbf0$ and $\mathbb P(C_{\infty,i}(1)>0)>0$
for all $i \geq1$.
\end{teo}

In particular, as $\varepsilon\to0$,
\[
\varepsilon^{1/\alpha} F( \zeta- \varepsilon) \stackrel{\mathrm{law}} {
\rightarrow} C_{\infty}(1).
\]
Since $\mathcal S$ is endowed with the $\ell_1$-distance, this entails
that the rescaled total mass
$
\varepsilon^{1/\alpha}M(\zeta-\varepsilon)
$
has a nontrivial limit in distribution as $\varepsilon\to0$.

The self-similarity of the limit process $C_{\infty}$ takes the form
\[
\bigl(a^{1/\alpha} C_{\infty}(a t), t \ge0\bigr) \stackrel{\mathrm
{law}} {=}\bigl(C_{\infty}(t), t \ge0\bigr)
\]
for all $a > 0$.
We will specify the distribution of $C_{\infty}$ more precisely below
once we have established the necessary notation; see Definition~\ref
{defnCinfty}. This process models the evolution of masses that
coalesce, with a regular immigration of infinitesimally small masses,
as illustrated in Figure~\ref{figCinfini}. Reversing time, this gives
a fragmentation process that starts from one infinitely large mass. A
connection with a biased randomized version of $F$ is made in
Proposition~\ref{CF}.

In a first paper \cite{GoldschmidtHaas}, we proved a result of the
same form as Theorem~\ref{teomainabstract} for a different subclass
of self-similar fragmentations with negative index, the stable
fragmentations. The stable fragmentations, which were introduced in
\cite{Miermont}, are qualitatively rather different in that they all
have infinite dislocation measures. They can be represented in terms of
stable L\'evy trees (see \cite{DLG02,DLG05} for a definition), and the
methods used in our earlier paper rely crucially on the excursion
theory available for these trees. The methods used in the present work
are quite different and are dependent on the finiteness of the
dislocation measure. We conjecture, nonetheless, that Theorem~\ref
{teomainabstract} is true for generic nongeometric self-similar
fragmentations with negative index.

The proof of Theorem~\ref{teomainabstract} proceeds in two main
steps. We begin by studying the \textit{last fragment process} $F_*$,
where $F_*(t)$ is the mass of the unique fragment present at time $t$
that dies exactly at time $\zeta$. We construct this process in
Section~\ref{Lastfrag}, where we also discuss some properties of
$\zeta$. We are, of course, interested in the asymptotic behavior of
$F_*$ close to time $\zeta$. A significant difficulty is that
the evolution of the process $F_*$ is not Markovian. To overcome this
difficulty, we introduce the discrete-time process
\[
Z_n=F_*(T_n)^{\alpha}(\zeta-T_n),
\qquad n \geq0,
\]
where $T_n$ denotes the $n$th jump time of the last fragment process
$F_*$. The quantity $Z_n$ can be thought of as an updated notion of the
extinction time seen in the natural timescale of the last fragment at
its $n$th jump time. It turns out that $(Z_n)_{n \ge0}$ is a Markov
chain which converges to a stationary distribution as $n \rightarrow
\infty$. This is proved in Section~\ref{cvZn} using standard
Foster--Lyapunov criteria. Moreover, the Markov chain $(Z_n)_{n \ge0}$
drives a bigger Markov chain which additionally tracks the relative
sizes of the fragments produced by the split at time $T_n$. From this
bigger Markov chain we derive a Markov renewal process in Section~\ref
{MarkovRW}, and we then use a version of renewal theory, developed for
such processes in \cite
{Alsmeyer2,Alsmeyer,AthreyaRen,JacodRen,KestenRen,OreyRen,ShurenkovRen},
to obtain the behavior of $F_*$ near $\zeta$.

The second step of the proof consists of decomposing the fragmentation
process along its \textit{spine} $F_*$, in order to get the behavior
of the whole process near $\zeta$. This is the purpose of
Sections~\ref{Spine} and~\ref{fullfrag}, where we prove a detailed
version of Theorem~\ref{teomainabstract}. Roughly speaking, the
limiting process $C_{\infty}$ is built from a spine, the limit process
of $F_*$ near $\zeta$, by grafting onto it independent fragmentation
processes conditioned to die before specific times. A significant
technical difficulty in this proof is to deal with blocks which
separated from the spine ``a long time in the past'' and have not yet
become extinct, and for this we will need to establish a tightness criterion.

Spine methods are standard in the study of branching processes. In
earlier work on fragmentation processes (e.g., in \cite
{BertoinSSF,BertoinAB}), the so-called \emph{tagged fragment} has
proved to be a very useful tool. This is again a sort of spine but of a
rather different nature to ours (in particular, the tagged fragment is
a Markov process). However, the tagged fragment vanishes at a time
which is strictly smaller than $\zeta$ and, as a consequence, cannot
help us to understand the behavior of the fragmentation near its
extinction time $\zeta$. We believe that the spine decomposition we
develop in the present paper, based on the last fragment process,
should not be particular to the finite dislocation measure case.
However, our results do not immediately extend to the case of infinite
dislocation measures.

As a direct consequence of Theorem~\ref{teomainabstract}, we are able
to construct an invariant measure for the fragmentation process (since
$F$ is transient, this is necessarily an infinite measure).

\begin{teo}
\label{teoinvariant}
Under the conditions of Theorem~\ref{teomainabstract}, consider the
occupation measure $\lambda$ of $C_{\infty}$, which is defined on
$(\mathcal{S}, \mathcal{B}(\mathcal{S}))$ by
\[
\lambda(A) = \int_0^{\infty} \mathbb{P}
\bigl(C_{\infty}(t) \in A \bigr)\, \mathrm{d} t
\]
for all $A \in\mathcal{B}(\mathcal{S})$.
Then $\lambda$ is a $\sigma$-finite invariant measure for the
transition kernel of the fragmentation process $F$; that is, for all
$u> 0$ and all $A \in\mathcal{B}(\mathcal{S})$,
\[
\lambda(A) = \int_{\mathcal{S}} \mathbb P_{\mathbf s }{\bigl(F(u)
\in A\bigr) } \lambda(\mathrm{d} \mathbf{s}).
\]
\end{teo}

We can interpret $\lambda$ heuristically as the ``law'' of $C_{\infty
}$ ``sampled at a uniform time in $[0, \infty)$.'' To the best of our
knowledge, this is the first time that invariant measures have been
considered for self-similar fragmentation processes. Theorem~\ref
{teoinvariant} is proved in Section~\ref{secinvariant}, where we
will see that it is an easy consequence of the convergence in
distribution of $\varepsilon^{1/\alpha} F(\zeta-\varepsilon)$ to
$C_{\infty}(1)$. In particular, this invariance result also holds for
the stable fragmentations and, more generally, for any fragmentation
process such that $\varepsilon^{1/\alpha} F(\zeta-\varepsilon)$ has a
nontrivial limit in distribution [in $(\mathcal S,d)$] as $\varepsilon
\rightarrow0$.

We conclude the main part of the paper in Section~\ref{geofrag} by
investigating the case of geometric fragmentations. These
fragmentations should not be viewed simply as a degenerate special
case: they can be interpreted in terms of various other models, in
particular discounted branching random walks (introduced by Athreya~\cite{Athreya}) and randomly growing $k$-ary trees (studied by Barlow,
Pemantle and Perkins \cite{BPP}). Theorem~\ref{teomainabstract} is
not valid for geometric fragmentations. Indeed, we will see in
Proposition~\ref{propgeo} that the rescaled sequence $\varepsilon
^{1/\alpha} F (\zeta- \varepsilon)$ does not converge in
distribution in this situation. However, we do obtain convergence along
suitable subsequences, which entails the existence of a continuum set
of distinct invariant measures, indexed by $x \in[0,1)$.

\hyperref[appendix]{Appendix} containing various technical
lemmas. It is split into two sections. The first concerns criteria for
convergence in the space $(\mathcal{S},d)$ and in the Skorokhod
topology on c\`adl\`ag processes taking values in $(\mathcal{S},d)$.
The second section contains the proofs of fine results about stationary
and biased versions of the Markov chain $(Z_n)_{n \ge0}$ which are
necessary for the proof of Theorem~\ref{teomainabstract} but which
are not of much intrinsic interest.

\section{The last fragment process}\label{Lastfrag}

In this section, we gather together some results on the extinction time
$\zeta$ and prove the existence of the last fragment process.
We refer to \cite{BertoinSSF,BertoinBook} for background on
fragmentation processes. In particular, we will use the following
\textit{strong fragmentation property} on several occasions.

\begin{prop}[(Bertoin \cite{BertoinSSF})]
\label{strongMarkov}
Let $T$ be a stopping time with respect to the filtration generated by
$F$. Write, for $t \geq T$,
\[
F(t) = \bigl(F^{(1,T)}(t), F^{(2,T)}(t), \ldots\bigr),
\]
where, for each $i \geq1$, $F^{(i,T)}$ is the process evolving in
$\mathcal S$ which has $F^{(i,T)}(T) = F_i(T)$ and, for
$t > T$, tracks the evolution of the fragments coming from the $i$th
block of $F(T)$. Then
\[
F^{(i,T)}(T+t)=F_{i}(T)G^{(i)}\bigl(tF_{i}(T)^{\alpha}
\bigr) \qquad\forall i \geq1,
\]
where the processes $G^{(i)}$ are independent and have the same
distribution as $F$. They are also independent of $T$ and
$F(T)$.
\end{prop}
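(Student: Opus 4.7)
The plan is to prove Proposition~\ref{strongMarkov} by combining the Poisson point process construction of $F$ (which encodes the branching property at deterministic times), the self-similarity of $F$, and a standard approximation argument to upgrade from deterministic to stopping times.

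First I would establish the statement for a deterministic time $T = t_0$. Recall Bertoin's construction: to each ``label'' attached to a block one associates an independent Poisson point process on $\mathbb{R}_+ \times \mathcal{S}_1$ with intensity $\mathrm{d} t \otimes \nu$, and splits of that block occur at the atoms, appropriately time-changed by the current mass through the factor $m_i^{-\alpha}$. Since the PPPs attached to distinct blocks of $F(t_0)$ are, by construction, independent of each other and of $\mathcal{F}_{t_0}$, the post-$t_0$ evolutions of the blocks of $F(t_0)$ are mutually independent given $\mathcal{F}_{t_0}$, and each evolves as a fragmentation started from a single block. Consequently, conditionally on $F(t_0)$, the process $F^{(i,t_0)}(t_0 + \cdot)$ has law $\mathbb{P}_{F_i(t_0)\mathbf{1}}$, and the $F^{(i,t_0)}$ are conditionally independent. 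Applying the self-similarity relation $\mathbb{P}_{m\mathbf{1}}(F(\cdot) \in A) = \mathbb{P}(m F(m^\alpha \cdot) \in A)$ then yields the representation $F^{(i,t_0)}(t_0 + t) = F_i(t_0)\, G^{(i)}(t F_i(t_0)^\alpha)$ with i.i.d.\ copies $G^{(i)}$ of $F$ under $\mathbb{P}$, independent of $\mathcal{F}_{t_0}$.

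Next I would extend to an arbitrary stopping time $T$ by the usual dyadic approximation. Define $T_n = \lfloor 2^n T \rfloor / 2^n + 2^{-n}$ on $\{T < \infty\}$; each $T_n$ is a stopping time taking countably many values, $T_n \downarrow T$, and the result for deterministic times, applied conditionally on $\{T_n = k 2^{-n}\}$ and summed over $k$, gives the desired identity at time $T_n$ with i.i.d.\ copies $G^{(i,n)}$ of $F$ independent of $\mathcal{F}_{T_n}$. Letting $n \to \infty$ and using the c\`adl\`ag property of $F$ in $(\mathcal{S},d)$, right-continuity of the filtration, and the continuity of the maps $(m, g) \mapsto m\, g(m^\alpha \cdot)$ on Skorokhod space for $m > 0$, one passes to the limit and obtains the $G^{(i)}$ independent of $\mathcal{F}_T$, and in particular of $(T, F(T))$. (For blocks with $F_i(T) = 0$ there is nothing to prove, since those blocks remain $0$.)

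The only slightly delicate step is the passage to the limit along the $T_n$: one must check that the collection $(G^{(i,n)})_{i,n}$ admits a joint limit in law and that this limit is still an i.i.d.\ sequence independent of $\mathcal{F}_T$. This can be handled by noting that, thanks to the Poisson point process construction, the $G^{(i,n)}$ can in fact be built from the \emph{same} underlying independent PPPs attached to the blocks tracked from time $T_n$; as $n \to \infty$, the labels stabilise (each block of $F(T)$ is eventually identified as one of the blocks at time $T_n$ for $n$ large enough), so the $G^{(i,n)}$ eventually coincide with a fixed i.i.d.\ sequence $(G^{(i)})$, making the limit identity deterministic rather than merely distributional. This avoids any measurability headache and concludes the proof.
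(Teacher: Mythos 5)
The paper gives no proof of Proposition~\ref{strongMarkov}: it is imported verbatim from Bertoin \cite{BertoinSSF} as a known result, so there is no in-paper argument to measure yours against. What you have written is essentially a reconstruction of the standard proof of the strong fragmentation property: the branching property at deterministic times read off from the independence structure of the Poissonian (or, in the finite-$\nu$ setting of this paper, exponential-clock) construction, self-similarity to identify the conditional law of each rescaled block, and a dyadic approximation from the right to pass from deterministic times to stopping times. The outline is sound, and in the finite dislocation measure case the process really is a continuous-time Markov branching process, so the direct construction you invoke is legitimate without the time-change machinery Bertoin needs for infinite $\nu$.

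Two steps would need tightening before this is a complete proof. First, your closing claim that the $G^{(i,n)}$ ``eventually coincide'' with a fixed i.i.d.\ sequence $(G^{(i)})$ is not accurate: once $n$ is large enough that the relevant block does not split in $(T,T_n]$, one has $G^{(i,n)}(s)=G^{(i')}\bigl(s+(T_n-T)F_{i'}(T)^{\alpha}\bigr)$ for the appropriate relabelling $i'$, i.e.\ the two processes differ by a random time shift tending to $0$; they converge almost surely in the Skorokhod sense but do not coincide. The limit identity still follows, but as written the assertion is false. Second, the relabelling itself deserves a word: the blocks of $F(T_n)$ are indexed by decreasing size, and this indexing need not match that of $F(T)$, so ``block $i$ at time $T_n$'' and ``block $i$ at time $T$'' are not a priori the same lineage; one should phrase the stabilisation in terms of lineages (ancestries) rather than ranks. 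Neither point is a fatal gap --- both are repairable with the material you already have --- but a referee would ask for them. Alternatively, you could bypass the dyadic approximation altogether by applying the strong Markov property of the driving Poisson point process (or of the exponential clocks) directly at the stopping time $T$, which is closer to how the result is actually established in \cite{BertoinSSF}.
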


\subsection{The extinction time} \label{secextinctiontime}

We begin by establishing some properties of
the
extinction time $\zeta$, which will be useful to us in the sequel.
We will make use of Proposition 14 of \cite{HaasLossMass}, which
states that
\[
\mathbb{E} \bigl[\exp(a\zeta) \bigr] < \infty\qquad\mbox{for all
positive $a$
sufficiently small.}
\]

\begin{lem} \label{LemmaDensity}
The distribution of $\zeta$ is absolutely continuous with respect
to Lebesgue measure on $(0,\infty)$, and there exists a continuous and strictly
positive version of its density, which we denote
$f_{\zeta}$. Furthermore:
\begin{longlist}[(iii)]
\item[(i)] $f_{\zeta}(x) \leq1$ for all $x \in(0,\infty)$;

\item[(ii)] $f_{\zeta}(x)=o(\exp(-cx))$ as $x \to\infty
$, for some $c>0$;

\item[(iii)] $f_{\zeta}(x)=o(1)$ as $x \to0$ and,
moreover, for each $\beta>\alpha$
such that $\int_{\mathcal S_1}s_1^{-\beta} \nu(\mathrm
d \mathbf s)<\infty$, $\mathbb F_{\zeta}(x):= \mathbb{P}
(\zeta\leq x ) =\mathcal O(x^{1-\beta/\alpha})$.
\end{longlist}
\end{lem}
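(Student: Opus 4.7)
The proof rests on the recursive identity obtained by applying the strong fragmentation property (Proposition~\ref{strongMarkov}) at the first jump time $T_1$, which is $\mathrm{Exp}(1)$-distributed under $\mathbb{P}_{\mathbf{1}}$. Writing $(S_1,S_2,\ldots)\sim\nu$ for the split (independent of $T_1$) and $(\zeta^{(i)})_{i\ge 1}$ for iid copies of $\zeta$ independent of everything else,
\[
\zeta \stackrel{\mathrm{law}}{=} T_1 + Y, \qquad Y := \max_{i\ge 1} S_i^{-\alpha}\zeta^{(i)},
\]
with $S_i^{-\alpha}\in[0,1]$ since $-\alpha>0$. Since $T_1$ has the continuous density $e^{-t}\mathbf{1}_{t\ge 0}$ independently of $Y$, the law of $\zeta$ admits the density
\[
f_\zeta(x)=\int_{[0,x]}e^{-(x-y)}\mathbb{P}_Y(\mathrm{d}y)\le\mathbb{P}(Y\le x)\le 1,
\]
which yields the bound in~(i). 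Moreover $\zeta$ has no atoms (being the sum of an atomless $T_1$ and $Y$), hence neither does $Y$ (an atom of $Y$ at $y_0$ would produce one of some $\zeta^{(i)}$ at $y_0/S_i^{-\alpha}$), and this makes $f_\zeta$ continuous on $(0,\infty)$. For strict positivity, suppose $c:=\mathrm{essinf}(Y)>0$; then each $\zeta^{(i)}\ge c$ a.s., yet on the positive-probability event where the dominant $\zeta^{(i)}$ in the max approach $c$, the fact that $S_1<1$ $\nu$-a.s.\ (from $\nu(\mathbf{1})=0$) forces $Y<c$, contradicting $Y\ge c$ a.s.

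For~(ii), Proposition~14 of~\cite{HaasLossMass} provides $a_0>0$ with $\mathbb{E}[\exp(a_0\zeta)]<\infty$; the identity $\zeta=T_1+Y$ with $T_1$ independent of $Y$ and $T_1\sim\mathrm{Exp}(1)$ then yields $\mathbb{E}[\exp(aY)]<\infty$ for any $a<a_0\wedge 1$. For such $a$ and $y\le x$, the algebraic identity $e^{-(x-y)+ax}=e^{-(1-a)(x-y)}e^{ay}\le e^{ay}$ gives
\[
f_\zeta(x)e^{ax}\le\int_{[0,x]}e^{ay}\mathbb{P}_Y(\mathrm{d}y)\le\mathbb{E}[e^{aY}]<\infty,
\]
so $f_\zeta(x)=O(e^{-ax})$; choosing $a$ strictly larger than any target $c<a_0\wedge 1$ upgrades this to $f_\zeta(x)=o(e^{-cx})$.

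For~(iii), the statement $f_\zeta(x)=o(1)$ as $x\to 0$ follows from continuity together with $\mathbb{P}_Y([0,x])\to 0$ (using $Y>0$ a.s.). For the polynomial bound, conditioning on the first split and using that $S_1^\alpha$ is the smallest among $(S_i^\alpha)_i$ (because $\alpha<0$ and $S_1$ is largest) gives
\[
F_\zeta(\epsilon)=\int_0^\epsilon e^{-t}\mathbb{E}\bigg[\prod_{i\ge 1}F_\zeta\big((\epsilon-t)S_i^{\alpha}\big)\bigg]\mathrm{d}t\le\int_0^\epsilon e^{-t}\mathbb{E}\big[F_\zeta((\epsilon-t)S_1^{\alpha})\big]\mathrm{d}t.
\]
Combining $F_\zeta(x)\le x$ (from $f_\zeta\le 1$) with the trivial $F_\zeta\le 1$ gives $F_\zeta(y)\le y^\theta$ uniformly in $y\ge 0$ for any $\theta\in(0,1]$. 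Substituting yields
\[
F_\zeta(\epsilon)\le\mathbb{E}[S_1^{\alpha\theta}]\int_0^\epsilon e^{-t}(\epsilon-t)^\theta\mathrm{d}t\le C\epsilon^{\theta+1},
\]
with $\mathbb{E}[S_1^{\alpha\theta}]=\int s_1^{-|\alpha|\theta}\nu(\mathrm{d}\mathbf{s})<\infty$ precisely when $\theta\le\beta/|\alpha|$. Iterating---if $F_\zeta(x)=O(x^\gamma)$ near $0$, the same computation (now with $F_\zeta(y)\le y^{\gamma\theta}$ uniformly and $\theta\le\beta/(|\alpha|\gamma)$) gives $F_\zeta(x)=O(x^{\min(\gamma,\beta/|\alpha|)+1})$---the exponent gains $1$ per round until it saturates at the fixed point $1+\beta/|\alpha|=1-\beta/\alpha$, reached after finitely many rounds.

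The main technical obstacle is the bootstrap in~(iii): because each round gains at most $\beta/|\alpha|$ in the exponent before the $\nu$-integrability constraint binds, several iterations must be composed when $\beta/|\alpha|>1$, with care to keep multiplicative constants controlled through each round. Strict positivity in~(i) also demands care when $\nu$ charges configurations with infinitely many nonzero blocks, which can be handled by truncating the maximum using $S_i^{-\alpha}\to 0$ together with the exponential tail of~$\zeta$.
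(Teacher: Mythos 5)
Your proposal is correct and follows essentially the same route as the paper: decompose $\zeta = T_1 + \max_i S_i^{-\alpha}\zeta^{(i)}$ at the first split, exploit the exponential law of $T_1$ to obtain the density $f_\zeta(x)=\int_{[0,x]}e^{-(x-y)}\,\mathbb{P}_Y(\mathrm dy)\le\mathbb{P}(Y\le x)$, bound $e^{ax}f_\zeta(x)$ by an exponential moment for (ii), and bootstrap the exponent of $F_\zeta$ by $+1$ per round subject to the $\nu$-integrability constraint for (iii). Your strict-positivity argument (ruling out $\mathrm{ess\,inf}(Y)>0$ via $S_1<1$ $\nu$-a.s.) and your atomlessness argument for continuity are mild variants of the paper's propagation-of-the-zero-set argument and its explicit density formula for $\xi$, and both correctly hinge on the same infinite-product estimate $\sum_j\mathbb{P}(\zeta>s_j^{\alpha}y)<\infty$ that you rightly flag at the end.
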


\begin{pf} Let $T_1:=\inf\{t \geq0\dvtx F(t)\neq(1,0,\ldots) \}$ be the
first splitting time of~$F$. Then $T_1$ is exponentially distributed
with parameter 1, and $F(T_1)$ is distributed according to $\nu$.
Moreover, since $T_1$ is a stopping time with respect to the filtration
generated by $F$, we get from Proposition~\ref{strongMarkov} that
\[
\zeta= T_1+\sup_{i\geq1}\bigl\{F_i(T_1)^{-\alpha}
\zeta^{(i)} \bigr\},
\]
where
$T_1$, $F(T_1)$ and $(\zeta^{(i)},i \geq1)$ are independent, and
$(\zeta^{(i)},i \geq1)$ is a collection of independent random
variables with
the same distribution as $\zeta$.
Since $T_1$ has an exponential
distribution, this implies that $\zeta$ possesses a density, say
$f_{\zeta}$, which in turn implies that $\xi:=\sup_{i\geq
1}\{F_i(T_1)^{-\alpha}\zeta^{(i)}\}$ possesses a density, given by
%
\begin{equation}
\label{EqZetamax} f_{\xi}(y)=\int_{\mathcal
S_1}\sum
_{i\dvtx s_i>0}f_{\zeta}\bigl(s_i^{\alpha}y
\bigr)s_i^{\alpha}\prod_{j\neq
i}\mathbb F_{\zeta}
\bigl(s_j^{\alpha}y\bigr) \nu(\mathrm d \mathbf s),
\end{equation}
where $\mathbb F_{\zeta}$ is the cumulative distribution function
corresponding to $f_{\zeta}$.
Note that if $\mathbb F_{\zeta}(s_j^{\alpha}y)>0$, for all\vspace*{2pt} $j\neq i$, then
necessarily $\prod_{j\neq i}\mathbb F_{\zeta}(s_j^{\alpha}y)>0$. This is
obvious when the set $\{j\dvtx s_j>0\}$ is finite. When it is infinite,
taking logarithms and using the fact that
\[
\log\bigl(\mathbb F_{\zeta}\bigl(s_j^{\alpha}y\bigr)
\bigr)\sim-\mathbb P \bigl(\zeta>s_j^{\alpha}y\bigr)
\]
as $j \rightarrow\infty$, we see that the above product is null
if and only if the sum $\sum_{j\neq i} \mathbb P
(\zeta>s_j^{\alpha}y)$ is infinite. But this never happens when
$\sum_{j\neq i}s_j\leq1$, since
\[
\mathbb P \bigl(\zeta>s_j^{\alpha}y\bigr) \leq\mathbb E
\bigl[\zeta^{-1/\alpha} \bigr]s_j y^{-1/\alpha}
\]
and $\zeta$ has exponential moments.

Now, choose $f_{\zeta}$ so that
%
\begin{equation}
\label{Zintermsofxi} f_{\zeta}(x)=\exp(-x)\int_{0}^{x}
\exp(y)f_{\xi}(y)\, \mathrm{d}y\qquad\mbox{for all }x>0.
\end{equation}
Then, $f_{\zeta}$ is continuous and $f_{\zeta}(x)\leq
\mathbb{P}(\xi\leq x) \rightarrow0$ as $x\rightarrow0$. In
particular, we get (i) and the first assertion of (iii). Note also
that if $f_\zeta(x)=0$ for some $x>0$, then $f_{\xi}$ equals $0$
a.e. on $[0,x]$. Hence, using (\ref{EqZetamax}) and the remark
following it, we see that $f_{\zeta}$ equals $0$ on $[0,x']$ for
some $x'>x$. This easily entails that $f_{\zeta}$ equals
$0$ on~$\mathbb R_+$, which is impossible. Hence, $f_{\zeta}(x)>0$
for all $x>0$.

Next, to prove (ii), note that for all $0 \leq a\leq1$,
\begin{eqnarray*}
\exp(ax)f_{\zeta}(x) &\leq&\int_{0}^{x}
\exp(ay) f_{\xi}(y) \,\mathrm d y \leq\mathbb{E} \bigl[ \exp(a\xi)
\bigr] \leq\mathbb{E} \bigl[ \exp(a\zeta) \bigr]
\end{eqnarray*}
since $\zeta=T_1+\xi$. The last expectation is finite for all
positive $a$ sufficiently small, and so $\exp(cx)f_{\zeta
}(x)\rightarrow0$ as $x\rightarrow\infty$ for all $c<a$.

It remains to prove the second assertion of (iii). Let $\Gamma:=\{
\gamma\geq0\mbox{ s.t. }\exists C_{\gamma}<\infty\dvtx  \mathbb F_{\zeta
}(x)\leq
C_{\gamma} x^{\gamma}, \forall x \geq0 \}$. Since $\mathbb F_{\zeta
}$ is
smaller than 1, $\Gamma$ is an interval whose left endpoint is
$0$. Moreover, since $f_{\zeta}(x)\leq1$ for all $x >0$, we have
$[0,1] \subseteq\Gamma$. In particular, we have checked the assertion
for $\beta\leq0$. Now consider $\gamma\in\Gamma$. We have
\begin{eqnarray*}
f_{\zeta}(x)&\leq&\mathbb{P}(\xi\leq x) \leq \int_{\mathcal
S_1}
\mathbb F_{\zeta}\bigl(s_1^{\alpha}x\bigr) \nu(\mathrm
d \mathbf s)\qquad \bigl[\mbox{since }\xi\geq F_1^{-\alpha}(T_1)
\zeta^{(1)}\bigr]
\\
&\leq& C_{\gamma}x^{\gamma} \int_{\mathcal S_1}
s_1^{\alpha\gamma} \nu(\mathrm d \mathbf s),
\end{eqnarray*}
which implies that $\gamma+1$ is in $\Gamma$ provided that
$\int_{\mathcal S_1} s_1^{\alpha\gamma} \nu(\mathrm d
\mathbf s) <\infty$. The second assertion of (iii) is then straightforward.
\end{pf}

\subsection{Building the last fragment}

For all $t \geq0$ and all $i \in\mathbb N$, denote by
$F^{(i,t)}$ the fragmentation process\vspace*{1pt} starting from
$(F_i(t),0,\ldots.)$ which tracks the evolution of the masses
emanating from $F_i(t)$. Let $Z^{(i,t)}:=\inf\{s\geq0\dvtx
F^{(i,t)}(s)=\mathbf{0}\}$ be the first time at which this process
is reduced to dust.

\begin{lem} \label{LemmaLastFragment}
Almost\vspace*{2pt} surely, for all $ 0 \leq t <
\zeta$, there exists a unique index $i(t)$ such that
$Z^{(i(t),t)}=\sup_{j\in\mathbb N}Z^{(j,t)}=\zeta-t$.
\end{lem}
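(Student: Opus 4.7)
The plan is to first observe that $t + Z^{(i,t)}$ is the \emph{absolute} extinction time of the sub-fragmentation emanating from block $i$, so that $\zeta = t + \sup_{j \in \N} Z^{(j,t)}$ for every $t \in [0,\zeta)$. The claimed equality $\sup_j Z^{(j,t)} = \zeta - t$ comes for free, and the task reduces to showing that, almost surely for all $t < \zeta$ simultaneously, the supremum is attained by a unique index.

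For attainment, I would note that conditional on $F(t)$, Proposition \ref{strongMarkov} gives $Z^{(j,t)} = F_j(t)^{-\alpha}\zeta^{(j)}$ for a family $(\zeta^{(j)})$ of i.i.d.\ copies of $\zeta$. The elementary bound $F_j(t) \leq M(t)/j$ together with the exponential moments of $\zeta$ (Lemma \ref{LemmaDensity}(ii)) and Borel--Cantelli forces $Z^{(j,t)} \to 0$ almost surely, so the supremum is a maximum.

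The heart of the argument is a reduction from the uncountable time-set $[0,\zeta)$ to the countable collection of jump times $0 = \tau_0 < \tau_1 < \tau_2 < \cdots$ of $F$. For $t \in [\tau_k,\tau_{k+1})$ and any block $i$ with $F_i(t)>0$, the sub-fragmentation rooted at block $i$ has not split between $\tau_k$ and $t$, so reading off its absolute extinction time from either base time gives the shift identity $Z^{(i,t)} = Z^{(i,\tau_k)} - (t-\tau_k)$. In particular, pairwise distinctness of the family $(Z^{(j,\tau_k)})_j$ propagates to pairwise distinctness of $(Z^{(j,t)})_j$ throughout $[\tau_k,\tau_{k+1})$ with the same maximizing index. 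At each jump time $\tau_k < \zeta$, Proposition \ref{strongMarkov} applied at the stopping time $\tau_k$ makes $(Z^{(j,\tau_k)})_{j : F_j(\tau_k)>0}$ conditionally independent given $F(\tau_k)$, with each term distributed as $F_j(\tau_k)^{-\alpha}\zeta$. Since $\zeta$ has a density (Lemma \ref{LemmaDensity}), each $Z^{(j,\tau_k)}$ has one, and for $i \neq j$ the difference $Z^{(i,\tau_k)} - Z^{(j,\tau_k)}$ does too, so $\Prob{Z^{(i,\tau_k)} = Z^{(j,\tau_k)}} = 0$. A union bound over countably many pairs $(i,j)$ and countably many jump times $\tau_k$ produces a single null exceptional set, outside of which uniqueness holds at every $\tau_k$, and therefore at every $t \in [0,\zeta)$ by the shift identity.

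The main obstacle is precisely this passage from a fixed-time density argument to uniqueness holding simultaneously on the whole interval $[0,\zeta)$, which the time-shift identity resolves cleanly. A secondary technical point is attainment of the supremum when $F(\tau_k)$ has infinitely many nonzero blocks, which is handled by the mass-summability and tail estimate sketched above.
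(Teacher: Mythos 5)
Your fixed-$t$ analysis is sound and matches the paper: the representation $Z^{(j,t)}=F_j(t)^{-\alpha}\zeta^{(j)}$ from Proposition \ref{strongMarkov}, the decay $Z^{(j,t)}\to 0$ forcing the supremum to be a maximum (your Borel--Cantelli route is a harmless variant of the paper's bound on $\mathbb{E}[\sum_j (Z^{(j,t)})^{-1/\alpha}]$), and the null-probability-of-ties argument via the densities of the independent $\zeta^{(j)}$.

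The gap is in the reduction to countably many times. You enumerate the jump times of $F$ as a discrete increasing sequence $0=\tau_0<\tau_1<\tau_2<\cdots$ and work on the intervals $[\tau_k,\tau_{k+1})$ of constancy of the block structure. For $\alpha<0$ no such enumeration exists: since $\nu$ is conservative, a block can only be reduced to dust through infinitely many splits accumulating at its death time, and the first block to die does so at some $\zeta_{\min}<\zeta$ almost surely. Hence the jump times of $F$ have accumulation points inside $[0,\zeta)$ (indeed they are dense in $[\zeta_{\min},\zeta]$, where mass is being lost continuously), there is no "$(k+1)$st jump time", and the interval $[\tau_k,\tau_{k+1})$ on which your shift identity $Z^{(i,t)}=Z^{(i,\tau_k)}-(t-\tau_k)$ would operate does not exist once splits accumulate. (The finiteness statement in Remark \ref{rem:finitejumps} concerns only the jump times of the last fragment $F_*$, not of $F$.) The correct reduction goes in the opposite direction: if $i(t)$ exists and is unique, then for every $s\le t$ the index $i(s)$ is forced to be the ancestor at time $s$ of block $i(t)$ — a second maximizer at time $s$ would produce, among its descendants at time $t$, a second maximizer at time $t$. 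So uniqueness propagates \emph{backwards} from $t$ to all $s\le t$ with no assumption on the jump structure, and it suffices to run your fixed-time argument along a countable set of times $t$ accumulating at $\zeta$ (e.g.\ the rationals, on the event $\{t<\zeta\}$) to cover all of $[0,\zeta)$.
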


\begin{pf}
Fix $t >0$. By Proposition~\ref{strongMarkov},
$Z^{(i,t)}=F_i(t)^{-\alpha}\zeta^{(i,t)}$, where\break
$(\zeta^{(i,t)}, i \geq1) $ is a collection of i.i.d. random variables,
with the same distribution as $\zeta$, independent of $F(t)$.
Hence
\[
\mathbb E \biggl[ \sum_{i\geq1} \bigl(Z^{(i,t)}
\bigr)^{-1/\alpha} \biggr]=\mathbb E \biggl[ \sum_{i\geq1}
F_i(t) \bigl(\zeta^{(i,t)}\bigr)^{-1/\alpha} \biggr] \leq
\mathbb E \bigl[ \zeta^{-1/\alpha} \bigr] <\infty.
\]
In particular, the sum $\sum_{i\geq1}
(Z^{(i,t)})^{-1/\alpha}$ is almost surely finite, which
implies that $Z^{(i,t)} \rightarrow0$ a.s. as $i
\rightarrow\infty$. Hence,\vspace*{1pt} the supremum $\sup_{j\in\mathbb
N}Z^{(j,t)}$ is attained for some $i \in\N$. Conditional on
$t< \zeta$, this index $i$ is necessarily a.s. unique, since
\begin{eqnarray*}
&&\mathbb{P} \bigl( \exists k,j\dvtx F_{k}^{-\alpha}(t)\zeta
^{(k,t)}=F_{j}^{-\alpha}(t)\zeta^{(j,t)},
F_{k}(t)\neq0, F_{j}(t)\neq0 \bigr)=0
\\
&&\qquad \Leftrightarrow\quad \forall k,j,\qquad\mathbb{P} \bigl(F_{k}^{-\alpha}(t)
\zeta^{(k,t)}=F_{j}^{-\alpha
}(t)\zeta^{(j,t)},
F_{k}(t)\neq0, F_{j}(t)\neq0 \bigr)=0,
\end{eqnarray*}
which is clearly satisfied, since $\zeta^{(k,t)}$ and $\zeta
^{(j,t)}$ are absolutely continuous (by Lemma~\ref{LemmaDensity})
and independent of $F(t)$. Hence, conditionally on $t <\zeta$,
there almost surely exists a unique index $i(t)$ such that
$Z^{(i(t),t)}=\sup_{j\in\mathbb N}Z^{(j,t)}$. To
conclude, note that when $i(t)$ exists and is unique, then, for
all $s \leq t$, $i(s)$ is automatically defined as the index of
the ancestor at time $s$ of $F_i(t)$. Therefore, with probability
one, the indices $i(t)$ are well defined for all $0 \leq t <
\zeta$.
\end{pf}

Let $(\Omega,\mathcal F)$ denote the measurable space on which we work.

\begin{defn}
Let $E:=\{\omega\in\Omega\dvtx  \forall t < \zeta(\omega), \exists!$
$i(t)(\omega)$ s.t. $Z^{(i(t)(\omega),t)}(\omega)=\sup_{j\in\mathbb
N}Z^{(j,t)}(\omega) \}$, and define for all $t \geq0$,
\[
F_{\ast}(t) (\omega)= \cases{ F_{i(t)(\omega)}(t) (\omega), &\quad if
$\omega\in E$ and $t<\zeta(\omega)$,
\cr
0, &\quad otherwise.}
\]
The process $F_{\ast}$ is called the \emph{last fragment
process}. It is nonincreasing, c\`adl\`ag and $\zeta=\inf\{t \ge0\dvtx
F_{\ast}(t)=0\}$ a.s. (by Lemma~\ref{LemmaLastFragment}).
\end{defn}

\begin{rem}
\label{remfinitejumps}
Almost surely, for all $t \geq0$, $F_*(t)>0$ implies that the number
of jumps of $F_*$ in $[0,t]$ is finite. This is obvious if $\nu(s_1
\leq a)=1$ for some $a<1$. Otherwise, it can be easily seen via the
Poissonian construction of the fragmentation in \cite{BertoinHom,BertoinSSF}.
\end{rem}

In the sequel, we will use the last fragment as a ``spine'' for the
fragmentation process: when blocks separate from the last fragment,
they evolve essentially as independent fragmentation processes which
are conditioned to die before the last fragment. We emphasize that it
is not measurable with respect to the natural filtration of the
fragmentation process.

\section{Asymptotics along a subsequence}\label{cvZn}

We now derive a convergent Markov chain from the last fragment process
$F_*$, which demonstrates that $F_*$ restricted to its jump times
behaves as expected near $\zeta$. We prove the Markov property of the
chain in Section~\ref{secMarkovchain} and show that it converges
exponentially fast to its stationary distribution in Section~\ref
{secergodicity}. In Section~\ref{secStatPro}, we consider an eternal
stationary version of the Markov chain. We also introduce a biased
version of this eternal chain, which is an essential building-block for
the process $C_{\infty}$.

\subsection{A Markov chain}\label{secMarkovchain}

Let $T_{1}<T_{2}< \cdots<T_{n}< \cdots$ be the increasing
sequence of times at which $F_{\ast}$ splits, that is, $T_{1}=\inf
{\{t\geq0\dvtx F_{\ast}(t)<1\}}$ and, for $n \geq2$,
\[
T_{n}=\inf{\bigl\{t\geq T_{n-1}\dvtx F_{\ast}(t)<F_{\ast}(T_{n-1})
\bigr\}}.
\]
For convenience, set $T_0 = 0$. We note that only $T_0$ and $T_1$ are
stopping times with respect to the natural filtration of the
fragmentation process. From Remark~\ref{remfinitejumps} and since
$\zeta=\inf\{t \geq0\dvtx F_*(t)=0\}$, we clearly have that
\[
T_n \to\zeta\qquad\mbox{a.s. as }n\rightarrow\infty.
\]
Define, for $n\geq0$,
%
\begin{equation}
\label{Markovprocess} Z_n:=\bigl(F_{\ast}(T_{n})
\bigr)^{\alpha}(\zeta-T_{n}),
\end{equation}
and note that $Z_n^{1/\alpha}$ is the value of the process $\varepsilon
^{1/\alpha}F_*(\zeta-\varepsilon)$ at $\varepsilon= \zeta-T_n$.
Intuitively, $Z_n$ is a version of the extinction time updated
according to what we know about the last fragment at time $T_n$.

Note also that $Z_0=\zeta$, and
set $\Theta_0 = 1$, ${\bolds\Delta}_0 = (0,0, \ldots)$. For $n \ge1$,
let $\Theta_n = F_*(T_n)/ F_*(T_{n-1})$, and let ${\bolds\Delta}_n =
(\Delta_{n,1},\Delta_{n,2}, \ldots)$ be the relative sizes of the
other sub-blocks resulting from the split of $F_*$ which occurs at time
$T_n$, ordered so that $\Delta_{n,1} \ge\Delta_{n,2} \ge\cdots\ge
0$. Then
\[
\bigl(F_*(T_{n-1}) \Delta_{n,1}, F_*(T_{n-1})
\Delta_{n,2}, \ldots\bigr)
\]
are the sizes of the blocks which \emph{split off} from the last
fragment at time $T_n$. As a consequence of the fact that $\nu$ is
conservative, we have $\Theta_n + \sum_{i=1}^{\infty} \Delta_{n,i}
= 1$ almost surely. See Figure~\ref{figspinedecomp} for an illustration.

\begin{prop} \label{propMarkovprop}
\textup{(a)} The process $(Z_n, \Theta_n, {\bolds\Delta}_n)_{n\geq
0}$ is a time-homogeneous Markov chain. Moreover, conditional on
$\sigma(Z_m, \Theta_m, {\bolds\Delta}_m, m \le n)$, the law of
$(Z_{n+1}, \Theta_{n+1}, {\bolds\Delta}_{n+1})$ depends only on the
value of $Z_n$.

\textup{(b)} The transition densities $P(x, \mathrm d y)$, $x>0$, of
$(Z_n)_{n \ge0}$ are given by
%
\begin{eqnarray}\label{ProbaTransition}
&& P(x,\d y)
\nonumber\\[-10pt]\\[-12pt]\nonumber
&&\qquad =\frac{e^{-x}}{f_{\zeta}(x)} {f_{\zeta
}(y)} \biggl( \int
_{\mathcal S_1} \sum_{i\dvtx s_i>0}e^{s_{i}^{-\alpha}y}
\prod_{j\neq i}\mathbb{F}_{\zeta}
\bigl(s_{j}^{\alpha}s_{i}^{-\alpha}y\bigr)
\mathbh{1}_{ \{ 0<y<s_{i}^{\alpha}x \} } \nu(\mathrm d \mathbf s) \biggr
) \,\mathrm d y,\hspace*{-20pt}
\end{eqnarray}
where $\mathbb F_{\zeta}$ is the cumulative distribution function of
$\zeta$.
\end{prop}

We refer to $(Z_n)_{n \ge0}$ as the \emph{driving chain} of $(Z_n,
\Theta_n, {\bolds\Delta}_n)_{n\geq0}$.

\begin{rem}\label{remdensitypositive}
The density
in (\ref{ProbaTransition}) is strictly positive for all $x,y>0$. This
is a consequence of the positivity of $f_{\zeta}$ on $(0, \infty)$
(Lemma~\ref{LemmaDensity}) and of the fact that
$\prod_{j\neq i}\mathbb{F}_{\zeta}(s_{j}^{\alpha}s_{i}^{-\alpha}y)
> 0$ when
$s_{i}^{-\alpha}y>0$ (as explained in the proof of
Lemma~\ref{LemmaDensity}).
\end{rem}

Let $Y_0:=\zeta^{1/\alpha}$, and for $n \geq1$, let
\[
Y_n:= \biggl(\frac{\zeta-T_n}{\zeta-T_{n-1}} \biggr)^{1/\alpha
}=
\frac{Z^{1/\alpha}_n}{Z^{1/\alpha}_{n-1} \Theta_n}.
\]
%

\begin{figure}

\includegraphics{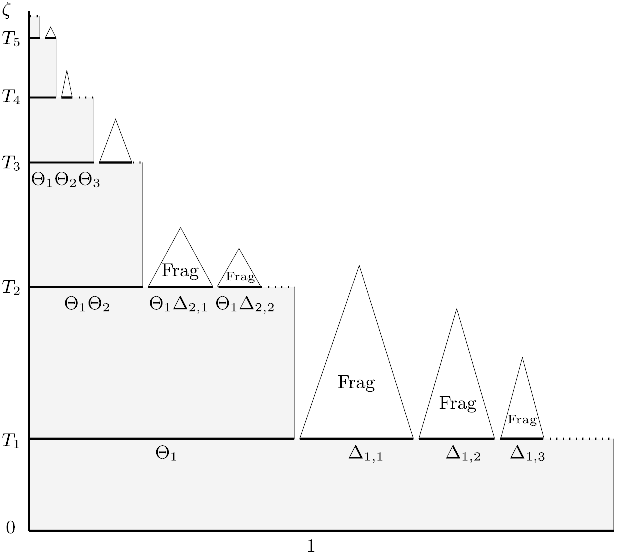}

\caption{The spine decomposition.  Time runs up the page.  The size of the last fragment, $F_{*}$, which
is constant on the intervals $[T_i,T_{i+1}-)$ is shaded. The blocks which split off from $F_{*}$ start their
own fragmentation processes, each conditioned to become extinct before $\zeta$.}\label{figspinedecomp}
\end{figure}

\noindent 
Later on it will turn out to be convenient to work with $Y_n$,
essentially because the times to extinction $\zeta-T_n$ can then be
expressed in the multiplicative form $\zeta\prod_{i=1}^nY_i^{\alpha
}$. To this end, we need the following simple corollary of
Proposition~\ref{propMarkovprop}.

\begin{cor}
\label{corotaun}
The process $ (Z_n, Y_n, {\bolds\Delta}_n )_{n \geq0}$ is a
time-homogeneous Markov chain with driving chain $(Z_n,n \geq0)$.
\end{cor}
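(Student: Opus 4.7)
The plan is to reduce the statement directly to Proposition~\ref{prop:Markovprop}, by observing that passing from $\Theta_n$ to $Y_n$ is an invertible deterministic transformation once one also knows $Z_{n-1}$ and $Z_n$. First I would note that, for $n \geq 1$, the definition $Y_n = Z_n^{1/\alpha}/(Z_{n-1}^{1/\alpha}\Theta_n)$ gives the explicit inverse relation
\[
\Theta_n \;=\; \frac{Z_n^{1/\alpha}}{Z_{n-1}^{1/\alpha}\, Y_n},
\]
while for $n=0$ we have $\Theta_0 = 1$ and $Y_0 = Z_0^{1/\alpha}$, both purely deterministic functions of the other coordinates. Consequently the filtrations
\[
\mathcal{F}_n^{\Theta} \;:=\; \sigma\bigl(Z_m, \Theta_m, \mathbf{\Delta}_m : m\leq n\bigr), \qquad \mathcal{F}_n^{Y} \;:=\; \sigma\bigl(Z_m, Y_m, \mathbf{\Delta}_m : m\leq n\bigr)
\]
coincide for every $n \geq 0$.

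Next I would apply part (a) of Proposition~\ref{prop:Markovprop}: conditional on $\mathcal{F}_n^\Theta$, the joint law of $(Z_{n+1}, \Theta_{n+1}, \mathbf{\Delta}_{n+1})$ depends only on $Z_n$, and the corresponding kernel does not depend on $n$. Since $\mathcal{F}_n^Y = \mathcal{F}_n^\Theta$, the same is true with $\mathcal{F}_n^\Theta$ replaced by $\mathcal{F}_n^Y$. Finally, because
\[
Y_{n+1} \;=\; \frac{Z_{n+1}^{1/\alpha}}{Z_n^{1/\alpha}\, \Theta_{n+1}}
\]
is a fixed measurable function of $(Z_n, Z_{n+1}, \Theta_{n+1})$ that does not depend on $n$, and $Z_n$ is $\mathcal{F}_n^Y$-measurable, the conditional law of $(Z_{n+1}, Y_{n+1}, \mathbf{\Delta}_{n+1})$ given $\mathcal{F}_n^Y$ is a time-homogeneous kernel depending only on the value of $Z_n$. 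This simultaneously establishes the Markov property of $(Z_n, Y_n, \mathbf{\Delta}_n)_{n\geq 0}$ and the fact that $(Z_n)_{n\geq 0}$ is its driving chain.

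There is no real obstacle here; the only point requiring a moment's care is to make sure the invertibility between $\Theta_n$ and $Y_n$ is well-defined at $n=0$ (handled by the initial conventions $\Theta_0=1$, $Y_0=\zeta^{1/\alpha}$) and that, for $n\geq 1$, the identity $\Theta_n = Z_n^{1/\alpha}/(Z_{n-1}^{1/\alpha}Y_n)$ makes sense, which it does $\mathbb{P}$-a.s.\ since $Z_n > 0$ for all $n \geq 0$ by the positivity of $f_\zeta$ (Lemma~\ref{LemmaDensity}) and Remark~\ref{rem:densitypositive}.
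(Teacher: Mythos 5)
Your proof is correct and is essentially the argument the paper intends: the corollary is stated as an immediate consequence of Proposition~\ref{prop:Markovprop}, and your observation that $\Theta_n \leftrightarrow Y_n$ is an a.s.\ well-defined, $n$-independent bimeasurable correspondence given $(Z_{n-1},Z_n)$ (with the $n=0$ conventions handled separately) is exactly the missing bookkeeping. Nothing further is needed.
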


The rest of this section is devoted to the proof of Proposition~\ref
{propMarkovprop}. Recall from Proposition~\ref{strongMarkov} that for
$t \geq0$, $F(T_1+t)$ is the decreasing rearrangement of the terms of
the sequences
\[
F_{1}(T_{1})G^{(1)}\bigl(tF_{1}(T_{1})^{\alpha
}
\bigr),F_{2}(T_{1})G^{(2)}\bigl(tF_{2}(T_{1})^{\alpha}
\bigr),\ldots,
\]
where the processes $G^{(i)}$ are independent fragmentations, all
having the same
distribution as $F$. They are also independent of $T_{1}$ and
$F(T_{1})$. Now let $\zeta^{(i)}=\inf\{t\geq
0\dvtx G^{(i)}(t)=\mathbf0\}$, so that
%
\begin{equation}
\label{eqzeta} \zeta=T_{1}+\sup_{i\geq1}\bigl
\{F_{i}(T_{1})^{-\alpha}\zeta^{(i)}\bigr\}.
\end{equation}
By Lemma~\ref{LemmaLastFragment}, this supremum is a maximum. Let
$I:=\operatorname{argmax}_{i\geq
1}\{F_{i}(T_{1})^{-\alpha}\zeta^{(i)}\}$, and note that $F_{\ast
}(T_{1})=F_{I}(T_{1})$ and $Z_1=\zeta^{(I)}$. Let
%
\begin{equation}
\label{Hs} H^{(i, j)} =G^{ (j+ \mathbh{1}_{ \{ j \ge i \}
} )}= \cases{ G^{(j)}, &
\quad if $j < i$,
\cr
G^{(j+1)}, &\quad if $j \ge i$.}
\end{equation}
Finally, for $x > 0$ and suitable test functions $\phi$ and $\psi$,
we write
\[
A(\phi,x) = \mathbb{E} \bigl[\phi(F) \mid\zeta= x \bigr] \quad\mbox
{and}\quad
B(\psi,x) = \mathbb{E} \bigl[\psi(F) \mid\zeta< x \bigr].
\]

\begin{rem}
The function $A(\phi, \cdot)$ is well defined only up
to a Borel set of Lebesgue measure 0, and is Borel-measurable.
However, when applied to a positive and absolutely continuous random
variable, say $X$, this is enough to define the random variable $A(\phi
,X)$ properly up to a set of probability 0. This remark is also valid
for any forthcoming functions defined as expectations conditional on
$\zeta=x$.
\end{rem}

The following lemma is the key result
needed to prove the Markov property of $(Z_n, \Theta_n, \bolds
{\Delta}_n)_{n\geq0}$.

\begin{lem} \label{TheLemma}
For all suitable test functions $\phi$ and $\psi_j, j \ge1$,
\begin{eqnarray*}
&& \mathbb E \Biggl[\phi\bigl(G^{(I)} \bigr) \prod
_{j=1}^{\infty} \psi_j
\bigl(H^{(I,j)} \bigr) \bigg| \zeta, \zeta^{(I)},
F_{I}(T_1), \bigl(F_{k}(T_1), k
\ne I\bigr) \Biggr]
\\
&&\qquad = A \bigl(\phi,\zeta^{(I)} \bigr) \prod_{j=1}^{\infty}
B \bigl(\psi_j, F_I^{-\alpha}(T_1)F^{\alpha}
_{j + \mathbh{1}_{ \{ j \ge I
\} }}(T_1) \zeta^{(I)} \bigr).
\end{eqnarray*}
\end{lem}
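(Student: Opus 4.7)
The strategy is to enlarge the conditioning $\sigma$-algebra so that the independence of the $G^{(i)}$'s coming from Proposition~\ref{strongMarkov} can be exploited directly, and then to project back down to the $\sigma$-algebra appearing in the statement.

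By Proposition~\ref{strongMarkov}, $(G^{(i)})_{i \geq 1}$ are i.i.d.\ copies of $F$, independent of $(T_1, F(T_1))$, so $(\zeta^{(i)})_{i\ge 1}$ are i.i.d.\ copies of $\zeta$ independent of $(T_1, F(T_1))$. Set
\[
\mathcal{G} := \sigma\big(T_1, F(T_1), (\zeta^{(i)})_{i\ge 1}\big),
\]
so that $I=\argmax_i\{F_i(T_1)^{-\alpha}\zeta^{(i)}\}$ is $\mathcal{G}$-measurable. Since the $G^{(i)}$ are independent and each $\zeta^{(i)}$ is a measurable functional of $G^{(i)}$, a standard disintegration shows that conditionally on $\mathcal{G}$ the $G^{(i)}$ remain independent, with $G^{(i)}$ having the conditional law of $F$ given $\zeta = \zeta^{(i)}$. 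This gives
\begin{equation*}
\mathbb{E}\bigg[\phi(G^{(I)})\prod_{j\ge 1}\psi_j(H^{(I,j)})\,\bigg|\,\mathcal{G}\bigg] = A(\phi,\zeta^{(I)}) \prod_{j\ge 1} A\big(\psi_j,\zeta^{(j+\I{j\ge I})}\big).
\end{equation*}

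Next, I would take conditional expectation with respect to the smaller $\sigma$-algebra $\mathcal{H}:=\sigma(\zeta,\zeta^{(I)},F_I(T_1),(F_k(T_1),k\ne I))$, which also determines $T_1 = \zeta - F_I(T_1)^{-\alpha}\zeta^{(I)}$. The key point is that, given $\mathcal{H}$, the remaining extinction times $(\zeta^{(j)})_{j\ne I}$ are conditionally independent, and each $\zeta^{(j)}$ has density $f_\zeta(x)\I{0<x<c_j}/F_\zeta(c_j)$, where
\[
c_j := F_I(T_1)^{-\alpha}F_j(T_1)^{\alpha}\zeta^{(I)}.
\]
Indeed, fixing the value of $\zeta^{(I)}$ together with the identity of the argmax $I$ translates into the product constraint $\{\zeta^{(j)}<c_j \text{ for all } j\ne I\}$ on the otherwise i.i.d.\ family $(\zeta^{(j)})_{j\ne I}$, and this constraint factorises. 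Using the identity
\[
\int_0^{c} A(\psi,x)f_\zeta(x)\,\d x = \mathbb{E}\!\left[\psi(F)\I{\zeta<c}\right] = B(\psi,c)\,F_\zeta(c),
\]
each factor $A(\psi_j,\zeta^{(j+\I{j\ge I})})$ then integrates against $f_\zeta(x)\,\d x/F_\zeta(c_{j+\I{j\ge I}})$ to produce $B(\psi_j, c_{j+\I{j\ge I}})$, yielding the formula in the statement.

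The main obstacle will be to justify the second conditioning step cleanly: one must show that the factorisation of the truncation constraint persists after disintegrating with respect to $\{I=i^*, \zeta^{(I)}=z\}$, which is possible because $\zeta$ is atom-free (Lemma~\ref{LemmaDensity}) and the argmax is a.s.\ unique (Lemma~\ref{LemmaLastFragment}). Since infinitely many $j$'s appear, one must also verify that the infinite product is well-defined and that the interchange of expectation and infinite product is valid; this is handled exactly as in the argument immediately following~\eqref{EqZetamax}, using $\sum_j F_j(T_1)\le 1$ together with the estimates on $F_\zeta$ from Lemma~\ref{LemmaDensity}. The first conditioning step is a routine disintegration.
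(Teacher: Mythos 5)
Your proof is correct and rests on exactly the same two pillars as the paper's argument: the conditional independence of the $G^{(i)}$ given their own extinction times $\zeta^{(i)}$, and the fact that, once $F(T_1)$ and $\zeta^{(i)}$ are fixed, the event $\{I=i\}$ is the product event $\bigcap_{j\ne i}\big\{\zeta^{(j)} < F_i^{-\alpha}(T_1)F_j^{\alpha}(T_1)\zeta^{(i)}\big\}$ over the remaining independent copies (with the a.s.\ uniqueness of the argmax and the atomlessness of $\zeta$ making strict versus non-strict inequality irrelevant). The only difference is organizational: you condition on the enlarged $\sigma$-algebra $\sigma\big(T_1,F(T_1),(\zeta^{(i)})_{i\ge1}\big)$ and then marginalize each truncated $\zeta^{(j)}$ via $\int_0^{c} A(\psi,x)f_{\zeta}(x)\,\d x = B(\psi,c)F_{\zeta}(c)$, whereas the paper verifies the conditional expectation directly against test functions of the conditioning variables and evaluates $\mathbb{E}\big[\psi_j(G^{(k)})\I{\zeta^{(k)}<c_k}\big]$ in a single step -- the same identity in disguise.
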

In particular, conditional on $\zeta^{(I)}$, $G^{(I)}$ is independent
of $\zeta$, $F(T_1)$ and $F_I(T_1)$, and is distributed as a
fragmentation process conditioned to die at time $\zeta^{(I)}$.

\begin{pf*}{Proof of Lemma \ref{TheLemma}}
We will, in fact, prove that
\begin{eqnarray*}
&& \mathbb E \Biggl[\phi\bigl(G^{(I)} \bigr) \prod
_{j=1}^{\infty} \psi_j
\bigl(H^{(I,j)} \bigr) \bigg|\zeta, \zeta^{(I)},
F(T_1), I \Biggr]
\\
&&\qquad = A \bigl(\phi,\zeta^{(I)} \bigr) \prod
_{j=1}^{\infty} B \bigl(\psi_j,
F_I^{-\alpha}(T_1)F^{\alpha}
_{j + \mathbh{1}_{ \{ j \ge I
\} }}(T_1) \zeta^{(I)} \bigr),
\end{eqnarray*}
which\vspace*{2pt} implies the statement of the lemma. Let $\chi$ be another test
function. For $i\neq j$, set $S_{i,j}=\{F_i^{-\alpha}(T_1)\zeta^{(i)}
\geq F_{j}^{-\alpha}(T_1)^{-\alpha} \zeta^{(j)} \}$ and note that $\{
I=i\}=\bigcap_{j\geq1}S_{i,j+ \mathbh{1}_{ \{ j \ge i
\} }}$. We have
\begin{eqnarray*}
&& \mathbb E \Biggl[\phi\bigl(G^{(I)} \bigr) \prod
_{j=1}^{\infty} \psi_j
\bigl(H^{(I,j)} \bigr) \chi\bigl(\zeta,\zeta^{(I)},
F(T_1) \bigr) \mathbh{1}_{ \{ I=i \} } \Biggr]
\\
&&\qquad  = \mathbb E \Biggl[\phi\bigl(G^{(i)} \bigr) \prod
_{j=1}^{\infty} \psi_j
\bigl(H^{(i,j)} \bigr) \chi\bigl(T_1+F_i^{-\alpha}(T_1)
\zeta^{(i)},\zeta^{(i)}, F(T_1) \bigr)
\mathbh{1}_{ \{ I=i \}
} \Biggr]
\\
&&\qquad  = \mathbb{E} \Biggl[ \chi\bigl(T_1+F_i^{-\alpha}(T_1)
\zeta^{(i)},\zeta^{(i)}, F(T_1) \bigr)
\\
&&\hspace*{10pt}\quad\qquad{} \times\mathbb E \Biggl[ \phi\bigl(G^{(i)} \bigr) \prod
_{j =1}^{\infty} \psi_j
\bigl(G^{(j + \mathbh{1}_{ \{ j \ge i \} })} \bigr) \mathbh{1}_{S_{i,j+
\mathbh{1}_{ \{ j \ge i \} }}} \bigg| T_1,F(T_1),
\zeta^{(i)} \Biggr] \Biggr].
\end{eqnarray*}
Since $G^{(j)}, j \ge1$ are independent fragmentations, independent of
$T_{1}$ and $F(T_{1})$,
we see that
\begin{eqnarray*}
& &\mathbb E \Biggl[\phi\bigl(G^{(i)}\bigr) \prod
_{j =1}^{\infty} \psi_j
\bigl(G^{(j + \mathbh{1}_{ \{ j \ge i \} })} \bigr) \mathbh
{1}_{S_{i,j+ \mathbh{1}_{ \{ j \ge i \} }}} \bigg|
T_1,F(T_1),\zeta^{(i)} \Biggr]
\\
&&\qquad = \mathbb{E} \bigl[\phi\bigl(G^{(i)} \bigr) \mid
\zeta^{(i)} \bigr] \prod_{j=1}^{\infty}
\mathbb{E} \bigl[\psi_j \bigl(G^{(j +
\mathbh{1}_{ \{ j \ge i \} })} \bigr)
\mathbh{1}_{S_{i,j+
\mathbh{1}_{ \{ j \ge i \} }}} \mid F(T_1),\zeta^{(i)} \bigr]
\\
&&\qquad = A \bigl(\phi, \zeta^{(i)} \bigr) \prod
_{j=1}^{\infty} B \bigl(\psi_j,
F_i^{-\alpha}(T_1)F^{\alpha}
_{j + \mathbh{1}_{ \{ j \ge i
\} }}(T_1)\zeta^{(i)} \bigr)
\\
&&\hspace*{60pt}\quad\qquad {}\times\mathbb{P} \bigl(\zeta^{(j + \mathbh{1}_{
\{ j \ge i \} })} <
F_i^{-\alpha}(T_1)F^{\alpha}
_{j+
\mathbh{1}_{ \{ j \ge i \} }}(T_1) \zeta^{(i)} \mid F(T_1),
\zeta^{(i)} \bigr)
\\
&&\qquad = A \bigl(\phi, \zeta^{(i)} \bigr) \Biggl[ \prod
_{j=1}^{\infty} B \bigl(\psi_j,
F_i^{-\alpha}(T_1)F^{\alpha}
_{j + \mathbh{1}_{
\{ j \ge i \} }}(T_1) \zeta^{(i)} \bigr) \Biggr] \mathbb{P}
\bigl(I=i \mid F(T_1), \zeta^{(i)} \bigr).
\end{eqnarray*}
Then
\begin{eqnarray*}
&& \mathbb E \Biggl[\phi\bigl(G^{(I)} \bigr) \prod
_{j=1}^{\infty} \psi_j
\bigl(H^{(I,j)} \bigr) \chi\bigl(\zeta,\zeta^{(I)},
F(T_1) \bigr) \mathbh{1}_{ \{ I=i \} } \Biggr]
\\
&&\qquad = \mathbb E \Biggl[A \bigl(\phi, \zeta^{(I)} \bigr) \prod
_{j=1}^{\infty} B \bigl(\psi_j,
F_I^{-\alpha}(T_1)F^{\alpha}
_{j +
\mathbh{1}_{ \{ j \ge I \} }}(T_1) \zeta^{(I)} \bigr)
\\
&&\hspace*{145pt}{}\times  \chi\bigl(\zeta,
\zeta^{(I)}, F(T_1) \bigr) \mathbh{1}_{ \{ I=i
\} }
\Biggr],
\end{eqnarray*}
and the result follows.
\end{pf*}

\begin{pf*}{Proof of Proposition~\ref{propMarkovprop}}
(a) We start by proving that $(Z,\Theta,{\bolds\Delta})$ is a
time-homogeneous Markov chain with driving chain $Z$. To see this, we
will show that for all suitable test functions $f,g_i$ and all $n \geq1$,
{\renewcommand{\theequation}{$\mathfrak{R}_n$}
\begin{eqnarray}\label{err}
&& \mathbb E
\Biggl[f(Z_n,\Theta_n,{\bolds\Delta}_n) \prod
_{i=0}^{n-1}g_i(Z_i,
\Theta_i,{\bolds\Delta}_i) \Biggr]
\nonumber\\[-8pt]\\[-8pt]\nonumber
&&\qquad =\mathbb E \Biggl[
\mathsf{F}_f(Z_{n-1}) \prod_{i=0}^{n-1}g_i(Z_i,
\Theta_i,{\bolds\Delta}_i) \Biggr],
\end{eqnarray}\setcounter{equation}{4}}%
where $\mathsf{F}_f(x)=\mathbb E[f(Z_1,\Theta_1,{\bolds\Delta}_1) \mid
Z_0=x]$. Note that $\mathsf{F}_f(x)$ is well defined for Lebesgue
a.e. $x>0$, since $Z_0=\zeta$ is absolutely continuous. We will prove
by induction on $n$ that (\ref{err}) is valid and that $Z_n$ is
absolutely continuous, so that $\mathsf{F}_f(Z_{n-1})$ is almost
surely well defined. In fact, once (\ref{err}) is proved, the
absolute continuity of $Z_n$ is a direct consequence of the absolute
continuity of $Z_{n-1}$ and of (\ref{err}), taking test
functions $f$ of the form $f=\mathbh{1}_A$ for Borel sets $A$ with
Lebesgue measure 0. So it is enough to focus in the following on the
proof of (\ref{err}) for $n\geq1$.

$(\mathfrak{R}_1)$ is an immediate consequence of the fact that
$\Theta_0$ and ${\bolds\Delta}_0$ are deterministic. Now assume that
(\ref{err}) holds, and
recall that the last fragment process $F_*$ can be written as
%
\begin{equation}
\label{passageI} F_*(T_1+t)=F_I(T_1)G^{(I)}
\bigl(t F^{\alpha}_I(T_1) \bigr), \qquad t \geq0.
\end{equation}
As for the standard fragmentation process, the last fragment process of
$G^{(I)}$ is well-defined since $G^{(I)}$ is a randomized version of
the fragmentation. We denote it by $(G_*^{(I)}(t),t \geq0)$.
Then for $k \geq1$, let $T_{k}^{(I)}$ be the $k$th time at which
$G_*^{(I)}$ splits, let
\[
\Theta_k^{(I)}:=G_*^{(I)}\bigl(T_{k}^{(I)}
\bigr)/G_*^{(I)}\bigl(T_{k-1}^{(I)}\bigr)
\]
and let ${\bolds\Delta}^{(I)}_k$ be the relative sizes of the other
sub-blocks resulting from the split of $G_*^{(I)}$ at time
$T_{k}^{(I)}$. From (\ref{passageI}), we get that
$T_{k+1}=T_1+F^{-\alpha}_I(T_1)T_{k}^{(I)}$, $\Theta_{k+1}=\Theta
_k^{(I)}$, ${\bolds\Delta}_{k+1}={\bolds\Delta}^{(I)}_k$
and
\[
Z_{k+1} = \bigl( G^{(I)}\bigl( T_{k}^{(I)}
\bigr) \bigr) ^{\alpha} \bigl( Z_1-T_{k}^{(I)}
\bigr):=Z^{(I)}_k.
\]
Therefore,
\begin{eqnarray*}
&& \mathbb E \Biggl[f(Z_{n+1},\Theta_{n+1},{\bolds
\Delta}_{n+1}) \prod_{i=0}^{n}g_i(Z_i,
\Theta_i,{\bolds\Delta}_i) \Biggr]
\\
&&\qquad =\mathbb E \Biggl[f\bigl(Z^{(I)}_n,\Theta^{(I)}_n,{
\bolds\Delta}^{(I)}_n\bigr)g_0(Z_0,
\Theta_0,{\bolds\Delta}_0)g_1(Z_1,
\Theta_1,{\bolds\Delta}_1)
\\
&&\hspace*{141pt}{}\times\prod
_{i=1}^{n-1}g_{i+1} \bigl(Z^{(I)}_i,
\Theta^{(I)}_i,{\bolds\Delta}^{(I)}_i
\bigr) \Biggr]
\\
&&\qquad = \mathbb E \Biggl[g_0(Z_0,\Theta_0,{\bolds
\Delta}_0)g(Z_1,\Theta_1,{\bolds
\Delta}_1)
\\
&&\hspace*{44pt}{}\times  \mathbb E \Biggl[f\bigl(Z^{(I)}_n,
\Theta^{(I)}_n,{\bolds\Delta}^{(I)}_n
\bigr)
\\
&&\hspace*{69pt}{}\times \prod_{i=1}^{n-1}g_{i+1}
\bigl(Z^{(I)}_i,\Theta^{(I)}_i,{\bolds
\Delta}^{(I)}_i\bigr) \bigg| Z_0,
Z_1,F(T_1), F_I(T_1) \Biggr]
\Biggr].
\end{eqnarray*}
Similarly,
\begin{eqnarray*}
&& \mathbb E \Biggl[\mathsf{F}_f(Z_{n}) \prod
_{i=0}^{n}g_i(Z_i,\Theta
_i,{\bolds\Delta}_i) \Biggr]
\\
&&\qquad = \mathbb E \Biggl[g_0(Z_0,\Theta_0,{\bolds
\Delta}_0)g(Z_1,\Theta_1,{\bolds
\Delta}_1)
\\
&&\hspace*{43pt}{}\times  \mathbb E \Biggl[\mathsf{F}_f
\bigl(Z^{(I)}_{n-1}\bigr) \prod_{i=1}^{n-1}g_{i+1}
\bigl(Z^{(I)}_i,\Theta^{(I)}_i,{\bolds
\Delta}^{(I)}_i \bigr) \bigg| Z_0,
Z_1,F(T_1),F_I(T_1) \Biggr]
\Biggr].
\end{eqnarray*}
Then by Lemma~\ref{TheLemma} (recall that $Z_0=\zeta$, $Z_1=\zeta
^{(I)}$) applied to the functions $\psi_j \equiv1, \forall j \in
\mathbb N$ and $\phi(G^{(I)})=f(Z^{(I)}_n,\Theta^{(I)}_n,{\bolds\Delta
}^{(I)}_n ) \prod_{i=1}^{n-1}g_{i+1}(Z^{(I)}_i,\Theta^{(I)}_i,{\bolds
\Delta}^{(I)}_i)$,
\begin{eqnarray*}
&& \mathbb E \Biggl[f\bigl(Z^{(I)}_n,\Theta^{(I)}_n,{
\bolds\Delta}^{(I)}_n \bigr) \prod_{i=1}^{n-1}g_{i+1}
\bigl(Z^{(I)}_i,\Theta^{(I)}_i,{\bolds
\Delta}^{(I)}_i\bigr) \bigg| Z_0,
Z_1,F(T_1),F_I(T_1)
\Biggr]
\\
&&\qquad =u(Z_1),
\end{eqnarray*}
where
\[
u(x)=\mathbb E \Biggl[f(Z_n,\Theta_n,{\bolds
\Delta}_n ) \prod_{i=1}^{n-1}g_{i+1}(Z_i,
\Theta_i,{\bolds\Delta}_i) \bigg| \zeta=x \Biggr],
\]
and similarly
\[
\mathbb E \Biggl[\mathsf{F}_f\bigl(Z^{(I)}_{n-1}
\bigr) \prod_{i=1}^{n-1}g_{i+1}
\bigl(Z^{(I)}_i,\Theta^{(I)}_i,{\bolds
\Delta}^{(I)}_i\bigr) \bigg|  Z_0,
Z_1,F(T_1),F_I(T_1)
\Biggr]=v(Z_1),
\]
where
\[
v(x)=\mathbb E \Biggl[\mathsf{F}_f(Z_{n-1}) \prod
_{i=1}^{n-1}g_{i+1}(Z_i,
\Theta_i,{\bolds\Delta}_i) \bigg| \zeta=x \Biggr].
\]
To get $(\mathfrak{R}_{n+1})$, it remains to prove that $u(x)=v(x)$
for Lebesgue-a.e. $x>0$. For this we use the induction hypothesis
(\ref{err}) which implies that the random variables
\[
f(Z_n,\Theta_n,{\bolds\Delta}_n ) \prod
_{i=1}^{n-1}g_{i+1}(Z_i,
\Theta_i,{\bolds\Delta}_i)\quad\mbox{and}\quad
\mathsf{F}_f(Z_{n-1}) \prod_{i=1}^{n-1}g_{i+1}(Z_i,
\Theta_i,{\bolds\Delta}_i)
\]
have the same expectation conditional on $\zeta$ since
\begin{eqnarray*}
&& \mathbb E \Biggl[ h(\zeta)f(Z_n,\Theta_n,{\bolds
\Delta}_n ) \prod_{i=1}^{n-1}g_{i+1}(Z_i,
\Theta_i,{\bolds\Delta}_i) \Biggr]
\\
&&\qquad = \mathbb E \Biggl[ h(
\zeta)\mathsf{F}_f(Z_{n-1}) \prod
_{i=1}^{n-1}g_{i+1}(Z_i,
\Theta_i,{\bolds\Delta}_i) \Biggr] %
\end{eqnarray*}
for all bounded measurable functions $h$. The result follows by induction.

(b) It remains to prove that the transition densities of the chain
$(Z_n)_{n\geq0}$ are given by identity (\ref{ProbaTransition}). To
get this, we compute the joint density of $(Z_0,Z_1)$. The first step
is to use the independence
of $T_1,F(T_1)$ and $(\zeta^{(j)},j \geq1)$ [defined in~(\ref{eqzeta})] and the fact that $F(T_1)$ is distributed according to $\nu
$, to get that, for any test
function~$\chi$,
\begin{eqnarray*}\label{law3uplet}
&& \mathbb{E} \bigl[\chi\bigl(F_I(T_1),
\zeta,T_1\bigr) \bigr]
\\
&&\qquad =\sum_{i=1}^{\infty}
\mathbb{E} \bigl[\chi\bigl(F_i(T_1),T_1+F_i(T_1)^{-\alpha}
\zeta^{(i)},T_1\bigr)\mathbh{1}_{ \{ I=i \} } \bigr]
\\
&&\qquad  =\int_{\mathcal S_1}\sum_{i\dvtx s_i>0}
\int_0^{\infty} \mathbb{E} \bigl[\chi
\bigl(s_i,t+s_i^{-\alpha}\zeta^{(i)},t\bigr)
\mathbh{1}_{ \{ s_i^{-\alpha} \zeta^{(i)} \geq\max_{j \neq i}
s_j^{-\alpha} \zeta^{(j)} \} } \bigr] e^{-t} \,\mathrm d t\, \nu(\mathrm d
\mathbf s)
\\
&&\qquad  = \int_{\mathcal S_1} \sum_{i\dvtx s_i>0}
\int_0^{\infty}\!\!\int_0^{\infty}
\chi\bigl(s_i,t+s_i^{-\alpha}z,t
\bigr)f_{\zeta}(z) \prod_{j \neq i} \mathbb
F_{\zeta}\bigl(s_j^{\alpha}s_i^{-\alpha}z
\bigr) e^{-t}\,\mathrm d t\, \mathrm d z\,\nu(\mathrm d \mathbf s).
\end{eqnarray*}
In the inner integral, let $x=t+s_i^{-\alpha}z$ [then $z=s_i^{\alpha}
(x-t)$] to get that this last is equal to
\[
\int_{\mathcal S_1}\sum_{i\dvtx s_i>0}\int
_0^{\infty}\!\!\int_0^{x}s_i^{\alpha}
\chi(s_i,x,t)f_{\zeta}\bigl(s_i^{\alpha}(x-t)
\bigr) \prod_{j
\neq i} \mathbb F_{\zeta}
\bigl(s_j^{\alpha}(x-t)\bigr)e^{-t}\, \mathrm d t \,
\mathrm d x\, \nu(\mathrm d \mathbf s).
\]
Taking $\chi(F_I(T_1),\zeta,T_1)=\phi(\zeta,F_I(T_1)^{\alpha
}(\zeta-T_1))$, we obtain
\begin{eqnarray*}
&&  \mathbb{E} \bigl[\phi(Z_0,Z_1) \bigr]
\\
&&\qquad = \mathbb{E}
\bigl[\phi\bigl(\zeta,F_I(T_1)^{\alpha}(
\zeta-T_1)\bigr) \bigr]
\\
&&\qquad  = \int_{\mathcal S_1}\sum_{i\dvtx s_i>0}\int
_0^{\infty}\!\!\int_0^x
s_i^{\alpha}\phi\bigl(x,s_i^{\alpha}(x-t)
\bigr) f_{\zeta}\bigl(s_i^{\alpha}(x-t)\bigr)
\\
&&\hspace*{106pt}{}\times \prod
_{j \neq i}\mathbb F_{\zeta}\bigl(s_j^{\alpha}(x-t)
\bigr) e^{-t}\, \mathrm d t \,\mathrm d x\, \nu(\mathrm d \mathbf s)
\\
&&\qquad  = \int_{\mathcal S_1} \sum_{i\dvtx s_i>0}\int
_0^{\infty}\!\!\int_0^{s_i^{\alpha}x}
e^{s_i^{-\alpha}y-x}\phi(x,y)f_{\zeta}(y) \prod_{j \neq i}
\mathbb F_{\zeta}\bigl(s_j^{\alpha}s_i^{-\alpha}y
\bigr)\, \mathrm d y \,\mathrm d x\, \nu(\mathrm d \mathbf s),
\end{eqnarray*}
where we have used the change of variable
$y=s_i^{\alpha}(x-t)$ in the inner integral, so that $t =
x-s_i^{-\alpha} y$. It follows that
the joint density of $(Z_0,Z_1)$ is given by
%
\begin{eqnarray}
f_{Z_0,Z_1}(x,y)=e^{-x}f_{\zeta}(y) \int
_{\mathcal S_1} \sum_{i\dvtx s_i>0}
e^{s_i^{-\alpha}y}\mathbh{1}_{ \{ y <
s_i^{\alpha}x \} } \prod_{j \neq i}
\mathbb F_{\zeta}\bigl(s_j^{\alpha}s_i^{-\alpha}y
\bigr) \nu(\mathrm d \mathbf s),\nonumber
\\[-2pt]
\eqntext{x,y >0.}
\end{eqnarray}
In particular, the density of $Z_1$ conditioned on $Z_0=x$ is given by
$f_{Z_0,Z_1}(x,y)/\break f_{\zeta}(x)$, as desired.
\end{pf*}

\subsection{Geometric ergodicity of the driving chain} \label{secergodicity}

In view of the role of $(Z_n)_{n \ge0}$ as driving chain, it will
suffice to study its ergodic properties in order to deduce those of
$(Z_n, \Theta_n, {\bolds\Delta}_n)_{n \ge0}$. This section is devoted
to the proof of the following result.

\begin{teo} \label{Theorem1}
Suppose that $\int_{{\mathcal S}_1}s_1^{-1}\nu(\mathrm d
\mathbf{s})<\infty$. Then the Markov chain $(Z_n)_{n\geq0}$ is
positive Harris recurrent and possesses a unique stationary
distribution on $(0,\infty)$, $\pi_{\mathrm{stat}}$. This stationary
distribution is absolutely continuous (with respect to
Lebesgue measure) and its density, which (with a slight abuse of
notation) we also denote by $\pi_{\mathrm{stat}}$, is the unique
solution to the equation
%
\begin{equation}
\label{Eqstat} \qquad\pi(x)=f_{\zeta}(x)\int_{{\mathcal
S}_1} \Biggl(
\sum_{i=1}^{\infty}e^{s_{i}^{-\alpha}x}\prod
_{j\neq
i}\mathbb{F}_{\zeta}\bigl(s_{j}^{\alpha}s_{i}^{-\alpha}x
\bigr) \biggl(\int_{s_{i}^{-\alpha
}x}^{\infty}\frac{e^{-y}\pi(y)}{f_{\zeta}(y)}\,
\mathrm d y \biggr) \Biggr)\nu(\mathrm d \mathbf s).
\end{equation}
Moreover, the distribution $\mathcal{L}(Z_n)$ of $Z_n$ converges to
$\pi_{\mathrm{stat}}$ exponentially fast;
more precisely, there exists a constant $r>1$ such that
%
\begin{equation}
\label{expcv} \sum_{n \geq1}r^n \bigl
\llVert\mathcal{L}(Z_n)-\pi_{\mathrm{stat}} \bigr\rrVert
_{\mathrm{TV}}<\infty,
\end{equation}
where $\llVert \cdot\rrVert _{\mathrm{TV}}$ denotes the total variation norm.
\end{teo}

We have not been able to extract an explicit expression for $\pi
_{\mathrm{stat}}$ from (\ref{Eqstat}). (However, Lemmas~\ref
{lemqualitatif} and~\ref{logZfiniteexpect} in the \hyperref[appendix]{Appendix} give some
qualitative information about it.) Note also that (\ref{Eqstat})
implies that $\pi_{\mathrm{stat}}(x)>0$ for $x>0$.

To prove Theorem~\ref{Theorem1}, we use the geometric
ergodic theorem of Meyn and Tweedie \cite{MeynTweedie}, Theorem 15.0.1, which is based on a Foster--Lyapounov drift
criterion; see (\ref{eqdrift}) below. To understand the meaning of
this criterion, we first need to introduce the concept of a small set.
With this in hand, all we will require in order to obtain Theorem~\ref
{Theorem1} from the geometric ergodic theorem are the forthcoming
Lemmas~\ref{lemIrr} and~\ref{lemFosterLyap}.
In the following, for each integer $n$, $P^n$ denotes the $n$-step
transition probability kernel of the chain $(Z_n)_{n \ge0}$.\vspace*{1pt}

Following page 109 of Meyn and Tweedie \cite{MeynTweedie}, a \textit
{small set} $C$
is a Borel subset of
$\R^{*}_+$, for which there exist an integer $m_C>0$ and a nontrivial
measure $\mu_C$ such that
%
\begin{equation}
\label{SmallSets} \qquad P^{m_C}(x,B)\geq\mu_C(B)\qquad\mbox{for
all Borel sets }B\subseteq(0,\infty)\mbox{ and all }x \in C.
\end{equation}
In our case, subsets of a compact subset of $(0,\infty)$ are clearly
small sets. Indeed, let $C\subseteq[a,b]$, $0<a<b$, and recall from Lemma
\ref{LemmaDensity} that $f_{\zeta}(x) \leq1$ for all $x >0$. It is
then easy to see that for all Borel sets $B \subseteq(0,\infty)$
and all $x\in C$,
\[
P(x,B)\geq e^{-b}\mu_C(B),
\]
where the measure $\mu_{C}$ is defined for all $B$ by
%
\begin{equation}
\label{eqnboundingmeasure} \qquad\mu_{C}(B)=\int_B{f_{\zeta}(y)}
\biggl( \int_{\mathcal
S_1}\sum_{i\dvtx s_i>0}e^{s_{i}^{-\alpha}y}
\prod_{j\neq i}\mathbb{F}_{\zeta}
\bigl(s_{j}^{\alpha}s_{i}^{-\alpha}y\bigr)
\mathbf1_{\{
0<y<s_{i}^{\alpha
}a\}} \nu(\mathrm d \mathbf s) \biggr) \,\mathrm d y.
\end{equation}
The Markov chain $(Z_n,n\geq0)$ is
\emph{Lebesgue-irreducible} if, for all Borel sets $B \subseteq
(0,\infty)$
with strictly positive Lebesgue measure and all $x>0$, there exists
an integer $n$ with $P^n(x,B)>0$. It is said to be \emph{strong
aperiodic} if there exists a small set $C$ with $m_C=1$ and $\mu_C(C)>0$.

\begin{lem}
\label{lemIrr}
$(Z_n, n \ge0)$ is both Lebesgue-irreducible and strong aperiodic.
\end{lem}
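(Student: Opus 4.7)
The plan is to derive both properties directly from the positivity of the transition density computed in Proposition \ref{prop:Markovprop}(b), together with the explicit bounding measure $\mu_C$ for a compact small set $C \subset (0,\infty)$ that was already constructed just before the statement in (\ref{eqn:boundingmeasure}).

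For Lebesgue-irreducibility, the key observation is Remark \ref{rem:densitypositive}: the transition density $p(x,y)$ of $P(x,\d y)$ is strictly positive on $(0,\infty)^2$. Hence for any $x > 0$ and any Borel set $B \subset (0,\infty)$ of strictly positive Lebesgue measure,
\[
P(x,B) = \int_B p(x,y)\,\d y > 0,
\]
so the irreducibility condition is already satisfied at $n=1$.

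For strong aperiodicity I take the small set $C = [a,b]$ with $0 < a < b$, which (as noted just before the statement) satisfies $P(x,B) \ge e^{-b}\mu_C(B)$ with $m_C = 1$, so what remains is to calibrate $a,b$ in such a way that $\mu_C(C) > 0$. Since $\nu(\mathbf 1) = 0$ and $\nu$ is a probability measure on $\mathcal S_1$, there exists $\eta \in (0,1)$ with $\nu(s_1 \le 1 - \eta) > 0$; because $\alpha < 0$, on this $\nu$-set one has $s_1^\alpha \ge (1-\eta)^\alpha > 1$. I then choose any $b \in (a,\,(1-\eta)^\alpha a)$. For every $y \in [a,b]$ and every $\mathbf s$ with $s_1 \le 1-\eta$ we then have $y \le b < s_1^\alpha a$, so the indicator $\I{0 < y < s_1^\alpha a}$ appearing in the $i = 1$ term of (\ref{eqn:boundingmeasure}) equals $1$ on a $\nu$-set of positive measure and on a $y$-interval of positive Lebesgue measure. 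The remaining factors $f_\zeta(y)$, $e^{s_1^{-\alpha}y}$ and $\prod_{j \ne 1} F_\zeta(s_j^\alpha s_1^{-\alpha} y)$ are all strictly positive, by Lemma \ref{LemmaDensity} and the argument recalled in Remark \ref{rem:densitypositive}. Integrating gives $\mu_C(C) > 0$, whence strong aperiodicity with the same small set $C$.

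The only delicate point, and therefore the main (albeit mild) obstacle, is the choice of $b$ relative to $a$ that produces a uniform separation $b < s_1^\alpha a$ on a $\nu$-set of positive mass, so as to keep the indicator in (\ref{eqn:boundingmeasure}) active. Everything else is a routine consequence of the positivity facts already established in Section \ref{Lastfrag}.
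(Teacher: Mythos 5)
Your proof is correct and takes essentially the same approach as the paper: Lebesgue-irreducibility from the strict positivity of the transition density (Remark~\ref{rem:densitypositive}), and strong aperiodicity from the fact that compact intervals $[a,b]$ are small with $m_C=1$, the paper merely leaving the verification of $\mu_C(C)>0$ implicit where you spell it out. One small simplification: the calibration $b<(1-\eta)^{\alpha}a$ is not needed, since $\mu_C(C)>0$ only requires the indicator $\I{0<y<s_1^{\alpha}a}$ to be active for a positive-Lebesgue-measure set of $y\in[a,b]$, and the nonempty interval $[a,\min(b,(1-\eta)^{\alpha}a))$ already does the job for any $b>a$.
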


(In fact, the geometric ergodic theorem is valid if we replace strong
aperiodicity by aperiodicity, but the definition of strong aperiodicity
is easier to write down and easy to check in our context.)

\begin{pf*}{Proof of Lemma \ref{lemIrr}}
By (\ref{ProbaTransition}) and Remark~\ref{remdensitypositive} we have $P(x,B)>0$ for all $x>0$ and all Borel
sets $B$ with strictly positive Lebesgue measure;
Lebesgue-irreducibility follows.
Strong aperiodicity follows directly from the above proof that subsets
of compact subsets of $(0,\infty)$ are small.
\end{pf*}

\begin{lem}[(Foster--Lyapounov drift criterion)]
\label{lemFosterLyap}
Assume that\break $\int_{\mathcal S_1} s_1^{-1} \nu(\mathrm d \mathbf
s)<\infty$.
Then there exists a small
set $C$, a function $V\dvtx (0,\infty) \rightarrow[1,\infty)$ and
constants $b<\infty$ and $\beta>0$ satisfying
%
\begin{equation}
\label{eqdrift} \mathbb P V(x)-V(x) \leq-\beta V(x)+b \mathbh{1}_C(x)
\qquad\forall x >0,
\end{equation}
where $\mathbb P V(x):=\int_{0}^{\infty} V(y) P(x,\mathrm
d y)$.
Moreover, $\int_{0}^{\infty} V(x)f_{\zeta}(x)\,\mathrm d x < \infty$.
\end{lem}

Note that in Theorem 15.0.1 of \cite{MeynTweedie}, the words \textit
{small sets} are replaced by \textit{petite sets}. However, small
implies petite, and so we lose nothing here by using the former notion.

\begin{pf*}{Proof of Lemma \ref{lemFosterLyap}}
Let
\[
\label{DefV} V(x):=\frac{\exp(-cx)}{f_{\zeta}(x)}, \qquad x>0,
\]
where $c \in(0,1/2)$ is such that $\exp(cx)f_{\zeta}(x) \rightarrow
0$ as $x
\rightarrow\infty$; such a $c$ exists by Lemma
\ref{LemmaDensity}. Hence, $V(x) \to\infty$ as $x
\rightarrow\infty$ and, still by Lemma
\ref{LemmaDensity}, it
is continuous and $V(x) \to\infty$ as $x \rightarrow0$. In
particular, it possesses a strictly positive minimum on
$(0,\infty)$, which, up to normalization, may be supposed to be 1.

For the remainder of the proof, we proceed in three steps. The goal of
the first two steps is to check that $\mathbb PV(x)<\infty$ for all
$x>0$ and that
\[
\frac{\mathbb PV(x)}{V(x)}=f_{\zeta}(x)\exp(cx)\mathbb P V(x)\rightarrow
0\qquad\mbox{as } x \rightarrow0\mbox{ or } x \rightarrow\infty.
\]
To this end, write $\mathbb P V(x)=\mathbb P_{1}
V(x)+\mathbb P_{2} V(x)$ where
\begin{eqnarray*}
\hspace*{-2pt}&& \mathbb P_{1} V(x)
\\
\hspace*{-2pt}&&\qquad := \frac{e^{-x}}{f_{\zeta}(x)}
\\
\hspace*{-2pt}&&\quad\qquad{}\times \int_{0}^{\infty}
V(y){f_{\zeta}(y)} \biggl( \int_{\mathcal S_1}\sum
_{i\dvtx s_i>c_1}e^{s_{i}^{-\alpha}y}
\prod_{j\neq i}
\mathbb{F}_{\zeta
}\bigl(s_{j}^{\alpha}s_{i}^{-\alpha}y
\bigr)\mathbh{1}_{ \{
0<y<s_{i}^{\alpha}x \} } \nu(\mathrm d \mathbf s) \biggr)\, \mathrm d y,
\end{eqnarray*}
with $c_1 \in(0,c^{-1/\alpha})$.

\begin{longlist}[\textit{Step} 2.]
\item[\textit{Step} 1.] We prove that the quantity $f_{\zeta}(x)\exp
(cx)\mathbb P_{1} V(x)$ is finite for
all $x>0$ and converges to 0 as $x$ tends to 0 or $\infty$.
To see this, note first that $s_i \leq i^{-1}$, $\forall i \geq1$, for
$\nu$-a.e. sequence $\mathbf s$, and, therefore, that the
sum involved in $\mathbb P_{1} V(x)$ only concerns indices
$i<c_1^{-1}$. Since this set of indices is finite, it is sufficient to
check that for all $i<c_1^{-1}$,
\[
e^{(c-1)x}\int_{\mathcal S_1} \mathbh{1}_{ \{ s_i >c_1
\} }
\biggl(\int_0^{s_{i}^{\alpha}x} V(y){f_{\zeta}(y)}
e^{s_{i}^{-\alpha
}y}\prod_{j\neq i}\mathbb{F}_{\zeta}
\bigl(s_{j}^{\alpha}s_{i}^{-\alpha}y\bigr)\,
\mathrm d y \biggr) \nu(\mathrm d \mathbf s)
\]
is finite and converges to 0 as $x$ tends to 0 or to $\infty$. This
term is bounded above by
%
\begin{equation}
\label{majorant} e^{(c-1)x}\int_{\mathcal S_1}
\mathbh{1}_{ \{ s_i >c_1
\} } \biggl(\int_0^{s_{i}^{\alpha}x}
e^{-cy + s_{i}^{-\alpha}y} \,\mathrm d y \biggr) \nu(\mathrm d \mathbf s)
\end{equation}
which is clearly finite and converges to 0 as $x \rightarrow0$. To get
a similar result when $x \rightarrow\infty$, recall that $c<1/2$, and
note that
\begin{eqnarray*}
&& e^{(c-1)x} \int_0^{s_{i}^{\alpha}x} e^{-cy + s_{i}^{-\alpha}y}\,
\mathrm d y
\\
&&\qquad \leq\cases{ e^{(c-1)x}s_i^{\alpha}x, &\quad
if $c_1^{-\alpha}<s_i^{-\alpha} \leq c$,
\vspace*{3pt}\cr
e^{(1-s_i^{\alpha})cx}s_i^{\alpha}x, &\quad if $c<s_i^{-\alpha}
\leq\frac{1}{2}$,
\vspace*{3pt}\cr
\bigl(e^{(1-s_i^{\alpha})cx}-e^{(c-1)x} \bigr)
\bigl(s_i^{-\alpha
}-c\bigr)^{-1}, &\quad if
$s_i^{-\alpha} >\frac{1}{2}$.} %
\end{eqnarray*}
In all three cases, the upper bound converges to 0 (since $s_i<1$) $\nu
$-a.e. as $x \rightarrow\infty$ and is bounded above by a finite
constant independent both of $x \geq1$ and of $s_i$ in the interval
under consideration. Hence, by dominated convergence, term (\ref
{majorant}) tends to 0 as $x \rightarrow\infty$.

\item[\textit{Step} 2.] We now prove a similar result to the one proved in
step~1, but for $\mathbb P_{2} V$. Here we use the hypothesis
$\int_{\mathcal S_1}s_1^{-1} \nu(\mathrm d
\mathbf s)<\infty$.
It will be sufficient to show that
%
\begin{equation}
\label{Eqsmalls} \int_{\mathcal S_1}\sum_{i\dvtx s_i \leq c_1}
\biggl( \int_0^{\infty} e^{(s_{i}^{-\alpha}-c)y} \prod
_{j\neq i}\mathbb{F}_{\zeta}\bigl(s_{j}^{\alpha}s_{i}^{-\alpha}y
\bigr)\, \mathrm d y \biggr) \nu(\mathrm d \mathbf s) <\infty,
\end{equation}
using the fact that $\exp{(c-1)x} \rightarrow
0$ as $x \rightarrow\infty$ and monotone convergence near $0$. To get
(\ref{Eqsmalls}), we use the existence of some finite constant $m$
[see Lemma~\ref{LemmaDensity}, and note that $\int_{\mathcal
S_1}s_1^{-\alpha-1} \nu(\mathrm d \mathbf s)\leq
\int_{\mathcal S_1}s_1^{-1} \nu(\mathrm d \mathbf
s)<\infty$] such that $\mathbb{F}_{\zeta}(s_{1}^{\alpha
}s_{i}^{-\alpha}y) \leq m s_{1}^{-1
}s_{i}y^{-1/\alpha}$, for all $y>0$. Hence, the double integral in
(\ref{Eqsmalls}) is
bounded above by
\begin{eqnarray*}
&& \int_{\mathcal S_1} \mathbh{1}_{ \{ s_1 \leq c_1 \} } \biggl(\int
_0^{\infty} e^{(c_1^{-\alpha}-c)y} \prod
_{j \geq2}\mathbb{F}_{\zeta}\bigl(s_{j}^{\alpha}s_{1}^{-\alpha}y
\bigr)\, \mathrm d y \biggr) \nu(\mathrm d \mathbf s)
\\
&&\quad{} + m \int_{\mathcal S_1} \sum_{i \geq2\dvtx s_i \leq c_1}
s_{1}^{-1}s_{i} \biggl(\int
_0^{\infty} e^{(c_1^{-\alpha}-c)y}y^{-1/\alpha}\, \mathrm
d y \biggr)\nu(\mathrm d \mathbf s)
\\
&&\qquad \leq\bigl(c-c_1^{-\alpha}\bigr)^{-1} \int
_{\mathcal S_1} \mathbh{1}_{
\{ s_1 \leq c_1 \} } \nu(\mathrm d \mathbf s) +
m'\int_{\mathcal S_1} s_{1}^{-1 }
\nu(\mathrm d \mathbf s) < \infty.
\end{eqnarray*}

\item[\textit{Step} 3.] From expression (\ref{ProbaTransition}) for the
transition density and from the fact that $f_{\zeta}$ is continuous,
we see that the function $x \mapsto\mathbb P V(x)$ is
continuous on $(0,\infty)$. Let $0<\beta<1$, and introduce the set
$C:=\{x>0\dvtx  \mathbb P
V(x)-(1-\beta)V(x) \geq0\}$. The continuity of $\mathbb P V/V$ on
$(0,\infty)$,
together with steps 1~and~2, imply that $C$ is a compact subset of
$(0,\infty)$, and so it is a small set.
Moreover $b:=\sup_{x \in C}(\mathbb P
V(x)-(1-\beta)V(x))<\infty$, since $\mathbb PV-(1-\beta)V$ is
continuous on $(0,\infty)$.
Finally, for all $x>0$,
\[
\mathbb PV(x) \leq(1-\beta) V(x)+ b \mathbh{1}_C(x), %
\]
which is the required drift criterion.

Finally, note that $\int_{0}^{\infty} V(x) f_{\zeta}(x) \,\mathrm
dx<\infty$ since $V(x) f_{\zeta}(x)=\exp(-cx), x>0$ for some $c>0$.\quad\qed
\end{longlist}\noqed
\end{pf*}

Theorem~\ref{Theorem1} now follows from the geometric ergodic theorem.

\subsection{The stationary and biased Markov chains}\label{secStatPro}

In order to construct the limit process $C_{\infty}$ appearing in
Theorem~\ref{teomainabstract}, we need to introduce an eternal
stationary version of $(Z_n,Y_n, {\bolds\Delta}_n)_{n \geq1}$ and then
a biased version of this stationary version; see the forthcoming
Definition~\ref{defnCinfty} of $C_{\infty}$. This biased version
will appear in the limit when using the techniques of Markov renewal
theory to pass from the convergence of $(Z_n)$ to the asymptotic
behavior of the continuous-time processes $F_{\ast}$ and $F$ near
their extinction time.

First, we can construct a stationary version of $(Z_n,Y_n, {\bolds\Delta
}_n)_{n \geq1}$ from a fragmentation process conditioned to have an
extinction time distributed according to $\pi_{\mathrm{stat}}$.
Formally, the Markov chain $ ((Z^{\mathrm{stat}}_n, Y^{\mathrm
{stat}}_n, {\bolds\Delta}^{\mathrm{stat}}_n)_{n \geq1}, Z^{\mathrm
{stat}}_0 )$ is defined by
\begin{eqnarray*}
&&\mathbb{E}\bigl[f \bigl( \bigl(Z_n^{\mathrm{stat}}, Y_n^{\mathrm{stat}}, \Delta_n^{\mathrm{stat}} \bigr)_{n \ge
1}, Z_0^{\mathrm{stat}} \bigr)\bigr]
\\
&&\qquad =\int
_{0}^{\infty}\mathbb E \bigl[f\bigl((Z_n,
Y_n, {\bolds\Delta}_n)_{n \geq1}, Z_0
\bigr) \mid\zeta=x \bigr]\pi_{\mathrm
{stat}}(\mathrm dx) %
\end{eqnarray*}
for suitable test functions $f$. Since $Z_0=\zeta$, the chain is then
stationary: $Z^{\mathrm{stat}}_n$ is distributed according to $\pi
_{\mathrm{stat}}$ for all $n \geq0$ and
\[
\bigl(Z^{\mathrm{stat}}_n, Y^{\mathrm{stat}}_n, {\bolds
\Delta}^{\mathrm
{stat}}_n\bigr) \stackrel{\mathrm{law}}=
\bigl(Z^{\mathrm{stat}}_1, Y^{\mathrm
{stat}}_1, {\bolds
\Delta}^{\mathrm{stat}}_1\bigr) \qquad\mbox{for } n \geq1, %
\]
since $(Z^{\mathrm{stat}}_n,n \geq0)$ is the driving chain of the
Markov chain $(Z^{\mathrm{stat}}_n, Y^{\mathrm{stat}}_n,\break {\bolds\Delta
}^{\mathrm{stat}}_n)_{n \geq1}$.

Now let
%
\begin{equation}
\label{defstat} \bigl(Z^{\mathrm{stat}}_n,Y^{\mathrm{stat}}_n,{
\bolds\Delta}^{\mathrm
{stat}}_n\bigr)_{n \in\mathbb Z}
\end{equation}
be an \textit{eternal} stationary version of $(Z_n, Y_n,{\bolds\Delta
}_n)_{n \geq1}$. Recall that such process always exists: for all
positive integers $k$, the distribution of the chain $(Z^{\mathrm
{stat}}_n,Y^{\mathrm{stat}}_n,{\bolds\Delta}^{\mathrm{stat}}_n)_{n
\geq-k}$ is defined to be that of $(Z^{\mathrm{stat}}_n,Y^{\mathrm
{stat}}_n,{\bolds\Delta}^{\mathrm{stat}}_n)_{n \geq1}$ and so,\vspace*{1pt} by
Kolmogorov's consistency theorem, the full process $(Z^{\mathrm
{stat}}_n,Y^{\mathrm{stat}}_n,{\bolds\Delta}^{\mathrm{stat}}_n)_{n \in
\mathbb Z}$ is well defined.

Observe that
\[
\int_{0}^{\infty} \mathbb{P} (Y_1 \leq1
\mid Z_0 = x ) f_{\zeta}(x)\, \mathrm dx = \mathbb{P}
(Y_1 \leq1 )=0
\]
and that, by Lemma~\ref{LemmaDensity}, $f_{\zeta}(x)>0$ for all $x$.
It follows that $\mathbb{P} (Y_1 > 1 \mid Z_0 = x ) = 1$ for
Lebesgue-a.e. $x$, and so we also have $\mathbb{P}
(Y_1^{\mathrm{stat}} > 1 ) = 1$. The following lemma is a
consequence of Lemma~\ref{mu} in the \hyperref[appendix]{Appendix}.

\begin{lem} \label{lemmufinite}
Suppose that $\int_{\Sfl}s_1^{-1} \nu(\mathrm d \mathbf s)<\infty$. Let
\[
\mu= \mathbb{E} \bigl[\log\bigl(Y_1^{\mathrm{stat}}\bigr) \bigr].
\]
Then $\mu\in(0, \infty)$.
\end{lem}

The biased version $(Z^{\mathrm{bias}}, {Y}^{\mathrm{bias}}, {\bolds
\Delta}^{\mathrm{bias}})$ of the eternal stationary Markov chain
constructed just above is then defined by
\[
\mathbb{E} \bigl[g \bigl(\bigl({Z}^{\mathrm{bias}}_n,
{Y}^{\mathrm
{bias}}_n, {\bolds\Delta}^{\mathrm{bias}}_n
\bigr)_{n \in\Z} \bigr) \bigr] = \frac{1}{\mu} \mathbb E \bigl[ \log
\bigl({Y}^{\mathrm{stat}}_1\bigr) g \bigl(\bigl({Z}^{\mathrm{stat}}_n,
{Y}^{\mathrm{stat}}_n, {\bolds\Delta}^{\mathrm{stat}}_n
\bigr)_{n \in\Z} \bigr) \bigr].
\]
Note that the eternal process $(Z^{\mathrm{bias}}_n, {Y}^{\mathrm
{bias}}_n, {\bolds\Delta}^{\mathrm{bias}}_n)_{n \in\Z}$ is a
time-inhomogeneous Markov chain. However, if we restrict to times $n
\ge1$, it is time-homogeneous, with the same transition kernel as the
stationary and standard versions (although a different initial
distribution). As in the standard case, we set
\[
\Theta_n^{\mathrm{bias}}:=\frac{(Z^{\mathrm{bias}}_n)^{1/\alpha
}}{(Z^{\mathrm{bias}}_{n-1})^{1/\alpha} Y^{\mathrm{bias}}_n}\qquad\mbox
{for } n \in
\mathbb Z.
\]

In Appendix~\ref{secappendix2} we will prove various
technical results about the stationary and biased Markov chains, which
will be used in the main body of the paper.

\section{Asymptotics of the last fragment}\label{MarkovRW}

We will now determine the asymptotics of $\varepsilon^{1/\alpha}
F_{*}(\zeta- \varepsilon)$ as $\varepsilon\rightarrow0$, and then of the
whole process $t \in\mathbb R_+\mapsto\varepsilon^{1/\alpha}
F_{*}(\zeta- \varepsilon t)$. The key point in our approach is the
ergodicity of the driving chain proved in the previous section.

From the biased Markov chain introduced in Section~\ref{secStatPro},
we can now define what will\vspace*{1pt} be the \emph{limit process}, which is
denoted by $(C_{\infty,*}(t),t \geq0)$.
Let $U$ be uniformly distributed on $[0,1]$, independently of
$({Z}^{\mathrm{bias}}, {Y}^{\mathrm{bias}}, {{\bolds\Delta}}^{\mathrm
{bias}})$.
Let
\[
R(k) = \cases{
\displaystyle\bigl(Y_1^{\mathrm{bias}}\bigr)^{-\alpha U} \prod_{i=1}^k \bigl(Y_i^{\mathrm{bias}}
\bigr)^{\alpha}, &\quad if $k \ge1$,
\vspace*{3pt}\cr
\displaystyle\bigl(Y_1^{\mathrm{bias}}
\bigr)^{-\alpha U}, &\quad if $k=0$,
\vspace*{3pt}\cr
\displaystyle\bigl(Y_1^{\mathrm{bias}}
\bigr)^{-\alpha U} \prod_{i=k+1}^0
\bigl(Y_i^{\mathrm
{bias}}\bigr)^{-\alpha}, &\quad if $k \le-1$,}
\]
so that $R(k)$ is a decreasing function of $k \in\Z$. Note the
multiplicative relation $R(k+1)=R(k)(Y_{k+1}^{\mathrm{bias}})^{\alpha
}, \forall k \in\mathbb Z$.
The following result follows from Lemma~\ref{lemLLN} in the \hyperref[appendix]{Appendix}.

\begin{lem} \label{lemras}
We have $R(k)\rightarrow0$ as $k \rightarrow\infty$ and
$R(k)\rightarrow\infty$ as $k \rightarrow-\infty$ almost surely.
\end{lem}

The process $C_{\infty,*}$ is then a nondecreasing piecewise constant
right-continuous process, which is defined by
$C_{\infty,*}(0)=0$ and, for $t>0$,
\[
C_{\infty,*}(t) = \bigl(Z^{\mathrm{bias}}_{k}
\bigr)^{1/\alpha} \bigl(R(k)\bigr)^{-1/\alpha} \qquad\mbox{if } t \in
\bigl[R(k+1),R(k) \bigr). %
\]
See Figure~\ref{figCinfinistar} for an illustration. The monotonicity
of $C_{\infty,*}$ comes from the identity
\[
\bigl(Z^{\mathrm{bias}}_{k}\bigr)^{1/\alpha}\prod
_{i=1}^{k} \bigl(Y_i^{\mathrm
{bias}}
\bigr)^{-1}=\bigl(Z^{\mathrm{bias}}_{0}\bigr)^{1/\alpha}
\prod_{i=1}^{k} \Theta_i^{\mathrm{bias}},
\qquad k \geq1 %
\]
and from the fact that the random variables $\Theta_i^{\mathrm
{bias}}$ lie in $(0,1)$ a.s. A similar equality holds\vspace*{1pt} for negative $k$.
Note that $R(1)<1<R(0)$ a.s. and so $C_{\infty,*}(1)=(Y^{\mathrm
{bias}}_1)^U (Z^{\mathrm{bias}}_0)^{1/\alpha}$.

\begin{figure}[b]

\includegraphics{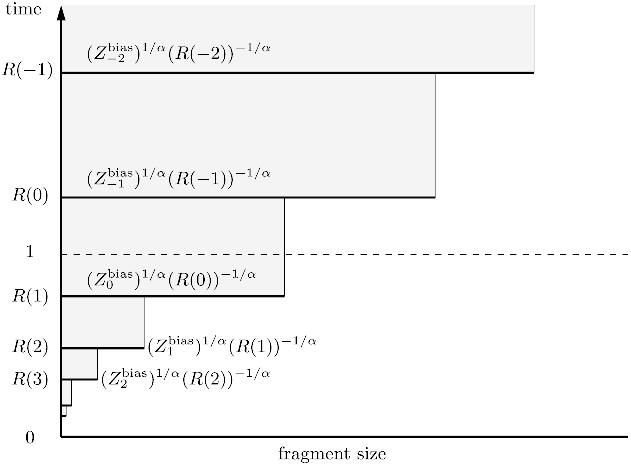}

\caption{The limit $(C_{\infty,*}(t), t \ge 0)$ of the last fragment.  The process is piecewise constant
between the jumps which are indicated. Compare to Figure~\protect\ref{figspinedecomp}: here time has been
reversed.}\label{figCinfinistar}
\end{figure}

\begin{teo} \label{teolastfrag}
Suppose that $\int_{\Sfl}s_1^{-1} \nu(\mathrm d \mathbf s)<\infty$
and that $\nu$ is nongeometric. Then, as $\varepsilon\to0$,
\[
\bigl( \bigl(\varepsilon^{1/\alpha} F_*\bigl((\zeta-\varepsilon t)-\bigr), t
\geq
0 \bigr), \zeta\bigr) \stackrel{\mathit{law}} {\rightarrow} \bigl(
\bigl(C_{\infty,*}(t), t \geq0 \bigr),\zeta\bigr),
\]
where $\zeta$ and $C_{\infty,*}$ are independent in the limit.
In particular,
\[
\varepsilon^{1/\alpha} F_*(\zeta-\varepsilon) \stackrel{\mathit{law}} {
\rightarrow}\bigl(Y^{\mathrm{bias}}_1\bigr)^U
\bigl(Z^{\mathrm
{bias}}_0\bigr)^{1/\alpha}.
\]
\end{teo}

The proof of this result is based on the convergence in distribution of
the driving chain $(Z_n)_{n\geq0}$, proved in the previous section,
and uses results from Markov renewal theory, which are gathered in
Section~\ref{backgroundMRT} below. In Section~\ref{Onedim}, we prove
the convergence of the one-dimensional marginal distributions of the
rescaled last fragment process. The full functional convergence is then
proved in Section~\ref{Wholepro}.

\subsection{Background on Markov renewal theory}\label{backgroundMRT}

Let $S_0=0$, and for $n \geq1$,
\[
S_n:= \sum_{i=1}^n
\log{Y_i}.
\]
As a consequence of Corollary~\ref{corotaun}, $(Z_n, S_n)_{n \ge0}$
is a \textit{Markov renewal process}
in the terminology of \cite
{Alsmeyer2,Alsmeyer,AthreyaRen,JacodRen,KestenRen,OreyRen,ShurenkovRen}.
We refer to Alsmeyer's paper \cite{Alsmeyer} for background on this
topic and results about asymptotic behaviors.
As in standard renewal theory, these results depend on hypotheses of
nonarithmeticity/arithmeticity for the support of the process. In our
context, this is formulated as follows: the process is called \emph
{$d$-arithmetic} if $d \geq0$ is the largest number for which there
exists a measurable function $\gamma\dvtx (0,\infty) \rightarrow[0,d)$
such that
%
\begin{equation}
\label{defarthm} \mathbb P \bigl(\log Y_1 \in\gamma(Z_0)-
\gamma(Z_1)+d \mathbb Z \bigr)=1.
\end{equation}
The process is \emph{nonarithmetic} if no such $d$ exists. The
condition for nonarithmeticity in our setting is unsurprising.

\begin{lem}
\label{lemarithmetic}
The process $(Z_n,S_n)_{n\geq0}$ is nonarithmetic if and only if the
dislocation measure $\nu$ is nongeometric.
\end{lem}

\begin{pf}
Recall that $Y_1=((\zeta-T_1)/\zeta)^{1/\alpha}$ and $\zeta
=T_1+\Theta_1^{-\alpha} Z_1$, with $T_1$ independent of $(\Theta
_1,Z_1)$, and $Z_0=\zeta$. If $\nu$ is $r$-geometric for some $r \in
(0,1)$, then $\Theta_1 \in r^{\mathbb N}$ a.s. and, consequently,
$\log Y_1 \in\alpha^{-1} ( \log Z_1-\log Z_0 )+(-\log
r)\mathbb N$ a.s. The arithmeticity of $(Z_n,S_n)_{n\geq0}$ follows.

Conversely, assume that (\ref{defarthm}) holds for some $d \geq0$
and some measurable function $\gamma$. This is equivalent to
\[
\mathbb P \bigl(\log\Theta_1 \in\overline\gamma\bigl(T_1+
\Theta_1^{-\alpha}Z_1\bigr)-\overline
\gamma(Z_1)+d \mathbb Z \bigr)=1 %
\]
for some suitable function $\overline\gamma$.
Since $\Theta_1^{-\alpha}Z_1$ has a strictly positive density on
$(0,\infty)$ [see the discussion around (\ref{EqZetamax})], and since
$T_1$ is independent of $(\Theta_1,Z_1)$, this implies that for
Lebesgue a.e. $a>0$,
there exists a real number $b_a$ such that
$
\mathbb P (\overline\gamma(T_1+a)\in b_a + d \mathbb Z )=1$.
But $T_1$ is exponentially distributed, and so $\overline\gamma(u+a)
\in b_a + d \mathbb Z$ for Lebesgue-a.e. $u>0$. This implies that
\[
\mathbb P \bigl(\overline\gamma(Z_0)-\overline\gamma(Z_1)
\in d\mathbb Z \mid Z_0 >a, Z_1>a \bigr)=1\qquad\mbox{for
Lebesgue a.e. }a >0. %
\]
Hence, $\mathbb P (\overline\gamma(Z_0)-\overline\gamma(Z_1)
\in d\mathbb Z )=1$, and so $\mathbb P(\log\Theta_1 \in
d\mathbb Z)=1$. Note that this implies that $d>0$. To conclude, assume
that $\nu$ is\vspace*{2pt} nongeometric; that is, that for all $r \in(0,1)$,
there exists some $i_r \in\mathbb N$ such that $\nu(s_{i_r} \notin
r^{\mathbb N}, s_{i_r}>0)>0$. Then
\[
\mathbb P\bigl(\log\Theta_1 \notin(\log r) \mathbb N\bigr) \geq
\mathbb P\bigl(\Theta_1=F_i(T_1),
F_i(T_1)\notin r ^{\mathbb N}\bigr)\qquad\forall i \in
\mathbb N. %
\]
Since $\mathbb P(\Theta_1=F_i(T_1) \mid F_i(T_1))>0$ when $F_i(T_1)>0$
[this is due to the fact that $\prod_{j\neq i} \mathbb F_{\zeta
}(s_j^{\alpha}x) > 0$, for $\mathbf s \in\mathcal S_1$, when $x>0$,
as explained in the proof of Lemma~\ref{LemmaDensity}] and, since
$\mathbb P(F_{i_r}(T_1)\notin r ^{\mathbb N}\cup\{0\})>0$ by
assumption, we have that $\mathbb P(\log\Theta_1 \notin(\log r)
\mathbb N)>0$ for all $r \in(0,1)$, which contradicts the fact that
$\mathbb P(\log\Theta_1 \in d\mathbb Z)=1$ for some $d>0$. Hence,
$\nu$ is geometric when (\ref{defarthm}) holds.
\end{pf}

Theorem 1 of Alsmeyer~\cite{Alsmeyer} applied to $(Z_n,S_n)_{n \ge0}$
yields the following result, with $\mu=\mathbb E [\log(Y_1^{\mathrm
{stat}})] \in(0,\infty)$; see Lemma~\ref{lemmufinite}.

\begin{teo}\label{teoAlsmeyer}
Suppose that the dislocation measure $\nu$ is nongeometric and such
that $\int_{\mathcal S_1}s_1^{-1} \nu(\mathrm d \mathbf
s)<\infty$. Suppose that $g\dvtx  \R_+ \times\R_+ \to\R$ is a
measurable function which is such that \textup{(a)}~$g(x,\cdot)$ is
Lebesgue-almost\break everywhere continuous for Lebesgue-almost all $x \in\R
_+$ and \textup{(b)}\break $\int_0^{\infty} \sum_{n \in\Z_+} \sup_{n
\rho\le y < (n+1)\rho} \llvert g(x,y)\rrvert\* \pi_{\mathrm
{stat}}(\mathrm d x) <
\infty$ for some $\rho> 0$. Then as $r \to\infty$,
\[
\mathbb{E} \biggl[\sum_{n \ge0} g(Z_n, r -
S_n) \Big| Z_0 = z \biggr] \to\frac{1}{\mu} \int
_{\R_+} \int_{\R_+} g(x,y)\, \mathrm d y\,
\pi_{\mathrm{stat}}(\mathrm d x),
\]
for Lebesgue-almost all $z \in\R_+$.
\end{teo}

In terms of the biased process introduced in Section~\ref{secStatPro},
Corollary 1 of \cite{Alsmeyer} reads as follows.

\begin{cor}\label{CoroAlsmeyer} Suppose that the dislocation measure
$\nu$ is nongeometric and such that $\int_{\mathcal
S_1}s_1^{-1} \nu(\mathrm d \mathbf s)<\infty$. Let $h\dvtx \R_+\times\R
_+ \rightarrow\R$ be a measurable function such that $g\dvtx  \R_+\times
\R_+ \to\R$ defined by
$g(x,y)=h(x,y)\mathbb{P} (\log(Y_1)>y \mid Z_0=x )$
satisfies conditions \textup{(a)} and \textup{(b)} of Theorem~\ref{teoAlsmeyer}. Let
\[
J(r)=\sup\{n\geq0\dvtx  S_n \leq r \}, %
\]
and assume that $J(r)<\infty$ for all $r \in\R_+$.
Then for Lebesgue-almost all $z \in\R_+$, as $r \rightarrow\infty$,
\[
\mathbb{E} \bigl[h (Z_{J(r)}, r-S_{J(r)} ) \mid
Z_0=z \bigr] \rightarrow\mathbb{E} \bigl[h \bigl(Z^{\mathrm{bias}}_0,
U \log\bigl(Y^{\mathrm{bias}}_1\bigr) \bigr) \bigr], %
\]
where $U$ is uniformly distributed on $[0,1]$ and independent of
$(Z^{\mathrm{bias}}_0, \log(Y^{\mathrm{bias}}_1))$.
\end{cor}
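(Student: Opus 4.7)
The plan is to reduce the statement to an application of Theorem~\ref{thm:Alsmeyer} and then to identify the resulting limit with the biased-chain expectation. First, I would decompose
\begin{equation*}
\E{h(Z_{J(r)}, r - S_{J(r)}) \mid Z_0 = z}  = \sum_{n \geq 0} \E{h(Z_n, r - S_n) \I{S_n \leq r < S_{n+1}} \mid Z_0 = z}.
\end{equation*}
By Corollary~\ref{corotaun}, $(Z_n, Y_n, {\bf \Delta}_n)_{n \geq 0}$ is a Markov chain with driving chain $(Z_n)_{n\geq 0}$, so conditional on $\sigma(Z_k, Y_k, {\bf \Delta}_k : k \leq n)$, the increment $\log Y_{n+1} = S_{n+1} - S_n$ has the same law as $\log Y_1$ under $\mathbb{P}(\cdot \mid Z_0 = Z_n)$. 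Integrating the indicator $\I{S_{n+1} > r}$ out on $\{S_n \leq r\}$ yields
\begin{equation*}
\E{h(Z_{J(r)}, r - S_{J(r)}) \mid Z_0 = z}  = \E{\sum_{n \geq 0} g(Z_n, r - S_n) \I{S_n \leq r} \mid Z_0 = z}
\end{equation*}
with $g(x,y) = h(x,y) \Prob{\log Y_1 > y \mid Z_0 = x}$ as in the statement. Extending $g(x,\cdot) \equiv 0$ on $(-\infty, 0)$ absorbs the remaining indicator, so the right-hand side becomes $\E{\sum_{n \geq 0} g(Z_n, r - S_n) \mid Z_0 = z}$.

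Next, the hypothesis on $h$ is precisely that $g$ satisfies conditions (a) and (b) of Theorem~\ref{thm:Alsmeyer}. Applying that theorem gives, for Lebesgue-a.e.\ $z \in \R_+$,
\begin{equation*}
\E{h(Z_{J(r)}, r - S_{J(r)}) \mid Z_0 = z} \underset{r \to \infty}{\longrightarrow} \frac{1}{\mu} \int_{\R_+} \pi_{\mathrm{stat}}(\d x) \int_0^{\infty} h(x, y) \Prob{\log Y_1 > y \mid Z_0 = x} \, \d y.
\end{equation*}

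It remains to identify this limit with $\E{h(Z_0^{\mathrm{bias}}, U \log Y_1^{\mathrm{bias}})}$. Using the definition of the biased chain in Section~\ref{secStatPro} and the independence of $U \sim \mathrm{Unif}[0,1]$, the change of variable $y = u \log Y_1^{\mathrm{stat}}$ gives
\begin{align*}
\E{h(Z_0^{\mathrm{bias}}, U \log Y_1^{\mathrm{bias}})}
&= \frac{1}{\mu} \E{\log Y_1^{\mathrm{stat}} \int_0^1 h(Z_0^{\mathrm{stat}}, u \log Y_1^{\mathrm{stat}}) \, \d u} \\
&= \frac{1}{\mu} \E{\int_0^{\log Y_1^{\mathrm{stat}}} h(Z_0^{\mathrm{stat}}, y) \, \d y}.
\end{align*}
Since $Z_0^{\mathrm{stat}}$ has law $\pi_{\mathrm{stat}}$ and, conditional on $Z_0^{\mathrm{stat}} = x$, $\log Y_1^{\mathrm{stat}}$ is distributed as $\log Y_1$ under $\mathbb{P}(\cdot \mid Z_0 = x)$, Fubini yields exactly the double integral above.

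The main obstacle is really just the bookkeeping of the Markov-property step: identifying the conditional law of the last increment $S_{J(r)+1} - S_{J(r)}$ as a function of $Z_{J(r)}$ alone, using the driving-chain property of Corollary~\ref{corotaun}. The direct Riemann-integrability condition on $g$ is encoded in the hypothesis, and the biased-chain identification is a one-line change of variables.
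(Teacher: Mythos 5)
Your argument is correct. Note, however, that the paper does not actually prove this statement: it presents the corollary as a direct transcription of Corollary 1 of Alsmeyer's paper on Markov renewal theory, just as Theorem~\ref{thm:Alsmeyer} is a transcription of his Theorem 1. What you have done is reconstruct the derivation of that corollary from the theorem in the present setting, and every step checks out: the partition $\{J(r)=n\}=\{S_n\le r<S_{n+1}\}$ is legitimate because $Y_1>1$ a.s.\ makes $(S_n)$ strictly increasing and $J(r)<\infty$ is assumed; the conditioning step is exactly the driving-chain property of Proposition~\ref{prop:Markovprop}(a) and Corollary~\ref{corotaun}, since $S_n$ is measurable with respect to $\sigma(Z_k,Y_k,{\bf \Delta}_k:k\le n)$ and $\log Y_{n+1}$ given this $\sigma$-field has the law of $\log Y_1$ given $Z_0=Z_n$; and the final identification
\begin{equation*}
\E{h\bigl(Z_0^{\mathrm{bias}}, U\log (Y_1^{\mathrm{bias}})\bigr)}=\frac{1}{\mu}\int_{\R_+}\int_0^{\infty}h(x,y)\,\Prob{\log (Y_1)>y\,|\,Z_0=x}\,\mathrm{d}y\,\pi_{\mathrm{stat}}(\mathrm{d}x)
\end{equation*}
follows from the definition of the biased chain, the substitution $y=u\log Y_1^{\mathrm{stat}}$, and Fubini. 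Two small points deserve a word: the interchange of the infinite sum with the expectation (and the Fubini step above) should be justified by first treating $|h|$, for which condition (b) of Theorem~\ref{thm:Alsmeyer} applied to $|g|$ supplies the required absolute integrability; and one should observe that $r-S_{J(r)}\ge 0$ so that $h$ is only ever evaluated on its domain $\R_+\times\R_+$. With these remarks your proof is a complete and self-contained substitute for the citation.
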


\begin{rem} \label{remals}
We have replaced all the ``for $\pi_{\mathrm{stat}}$-almost all $x$''
in\break Alsmeyer's results by ``for Lebesgue-almost all $x$'' since $\pi
_{\mathrm{stat}}$ is equivalent to Lebesgue measure on $\mathbb R_+$.
Note also that a bounded measurable function $h\dvtx \mathbb R_+\times
\mathbb R_+ \rightarrow\mathbb R$ which is such that $h(x,\cdot)$ is
Lebesgue-almost everywhere continuous for Lebesgue-almost all $x \in\R
_+$, satisfies the conditions of Corollary~\ref{CoroAlsmeyer}. Indeed,
the measurability and condition (a) are obvious. For condition (b),
take $\rho=1$, set $\llVert h\rrVert _{\infty}=\sup_{x \geq0}\llvert
h(x)\rrvert $ and note that
\begin{eqnarray*}
&&\int_{\mathbb R_+} \sum_{n \in\Z_+} \sup
_{n \le y < n+1} \bigl\llvert h(x,y)\bigr\rrvert\mathbb P \bigl(
\log(Y_1)>y \mid Z_0=x \bigr) \pi_{\mathrm
{stat}}(\mathrm d
x)
\\
&&\qquad \leq\llVert h\rrVert_{\infty}\!\!\int_{\mathbb R_+} \sum
_{n \in\Z_+} \mathbb P \bigl(\log(Y_1)>n \mid
Z_0=x \bigr) \pi_{\mathrm{stat}}(\mathrm d x)
\\
&&\qquad \leq\llVert h\rrVert_{\infty} (1+\mu)<\infty,
\end{eqnarray*}
since $\mathbb E[Z]+1 \geq\sum_{n \in\mathbb Z_+} \mathbb P(Z > n)$
for any positive random variable $Z$.
\end{rem}

\subsection{One-dimensional convergence}\label{Onedim}

We use Corollary~\ref{CoroAlsmeyer} to obtain the convergence in
distribution of the rescaled last fragment at time $\zeta- \varepsilon$
as $\varepsilon\rightarrow0$,
%
\begin{equation}
\label{cvfd} \bigl(\varepsilon^{1/\alpha} F_*(\zeta-\varepsilon),\zeta\bigr)
\stackrel{\mathrm{law}} {\rightarrow} \bigl(\bigl(Z^{\mathrm
{bias}}_0
\bigr)^{1/\alpha}\bigl(Y^{\mathrm{bias}}_1\bigr)^U,
\zeta\bigr),
\end{equation}
with $\zeta$ independent of $(Z^{\mathrm{bias}}_0)^{1/\alpha
}(Y^{\mathrm{bias}}_1)^U$ in the limit.
In fact, this result will be an immediate consequence of the proof of
Theorem~\ref{teolastfrag} in the next section. However, its proof is
instructive and so, by way of a brief warm-up, we give the details here.

Let
%
\begin{eqnarray}
\label{defNeps} N_{\varepsilon} & =& \sup\{n \ge0\dvtx  \zeta- \varepsilon\ge
T_n\} = \sup\Biggl\{n \geq0\dvtx  \prod_{i=0}^n
Y_i^{\alpha} \ge\varepsilon\Biggr\}
\nonumber\\[-8pt]\\[-8pt]\nonumber
& =& \sup\Biggl\{n \geq0\dvtx  \sum_{i=0}^n
\log{Y_i} \leq\frac
{1}{\alpha}\log\varepsilon\Biggr\},
\end{eqnarray}
with the convention that $\sup\varnothing=-\infty$.
Note that for all $\varepsilon> 0$, since $T_n \to\zeta$ almost surely,
$N_\varepsilon<\infty$ almost surely. Also,
\[
\mathbb{P} (N_\varepsilon\neq-\infty) = \mathbb{P} (\zeta\geq\varepsilon)
\rightarrow1\qquad\mbox{as } \varepsilon\rightarrow0.
\]
Therefore,
\[
F_*(\zeta- \varepsilon) = F_*(T_{N_{\varepsilon}}) \mathbh{1}_{ \{
N_{\varepsilon} \neq-\infty \} } +
\mathbh{1}_{ \{
N_{\varepsilon} = -\infty \} } = \prod_{i=0}^{N_{\varepsilon}}
\Theta_{i} \mathbh{1}_{ \{ N_{\varepsilon} \neq-\infty
\} } + \mathbh{1}_{ \{ N_{\varepsilon} = -\infty \} }.
\]
Hence, since $\prod_{i=0}^{N_{\varepsilon}} \Theta_{i}=Z_{N_{\varepsilon
}}^{1/\alpha}\prod_{i=0}^{N_{\varepsilon}} Y^{-1}_{i}$,
\begin{eqnarray*}
&& \varepsilon^{1/\alpha} F_*(\zeta-\varepsilon)
\\
&&\qquad = Z_{N_{\varepsilon
}}^{1/\alpha}
\exp\biggl( \frac{1}{\alpha} \log\varepsilon- S_{N_{\varepsilon}}-\frac
{1}{\alpha}
\log\zeta\biggr) \mathbh{1}_{
\{ N_{\varepsilon} \neq-\infty \} } + \varepsilon^{1/\alpha
}\mathbh{1}_{ \{ N_{\varepsilon} = -\infty \} }.
\end{eqnarray*}

Next, let $f\dvtx \mathbb R_+ \rightarrow\mathbb R$ be a bounded continuous
test function. To obtain (\ref{cvfd}), it is sufficient to prove that
for Lebesgue-almost all $z>0$,
\[
\mathbb E \bigl[f \bigl( \varepsilon^{1/\alpha} F_*(\zeta-\varepsilon) \bigr)
\mid\zeta=z \bigr] \rightarrow\mathbb E \bigl[ f \bigl(\bigl
(Z_0^{\mathrm{bias}}
\bigr)^{1/\alpha} \bigl({Y}_1^{\mathrm
{bias}}\bigr)^U
\bigr) \bigr]. %
\]
So let $z>0$ and note that, conditional on $\zeta=z$, $N_{\varepsilon
}\neq-\infty$ for all $\varepsilon\leq z$. Hence, for $\varepsilon\leq
z$, since $Z_0=\zeta$,
\begin{eqnarray*}
&& \mathbb E \bigl[f \bigl( \varepsilon^{1/\alpha} F_*(\zeta-\varepsilon)
\bigr)
\mid\zeta=z \bigr]
\\
&&\qquad = \mathbb E \biggl[f \biggl( Z_{N_{\varepsilon}}^{1/\alpha}\exp\biggl(
\frac{1}{\alpha} \log\varepsilon- S_{N_{\varepsilon}}-\frac{1}{\alpha
}\log z \biggr)
\biggr) \Big|  Z_0=z \biggr]
\\
&&\qquad = \mathbb E \biggl[f \biggl( Z_{J(\alpha^{-1} \log(\varepsilon
/z))}^{1/\alpha}\exp\biggl(
\frac{1}{\alpha} \log(\varepsilon/z) - S_{J(\alpha^{-1} \log(\varepsilon/z))}
\biggr) \biggr) \Big|
Z_0=z \biggr],
\end{eqnarray*}
where $J$ is defined in Corollary~\ref{CoroAlsmeyer}.
The last expectation converges to\break $\mathbb E [ f
((Z_0^{\mathrm{bias}})^{1/\alpha} ({Y}_1^{\mathrm{bias}})^U )
]$ as $\varepsilon\rightarrow0$, by Corollary~\ref
{CoroAlsmeyer}, since the function $h$ defined on $(0,\infty) \times
[0,\infty)$ by
\[
h(x,y)=f\bigl(x^{1/\alpha}\exp(y)\bigr) %
\]
and by, say, $h(0,y)=0$ for $y \in\mathbb R_+$, satisfies the
conditions of Corollary~\ref{CoroAlsmeyer}; see Remark~\ref{remals}.

\subsection{Functional convergence}\label{Wholepro}

We take as a convention (for the standard version of our Markov chain,
started from $Z_0 = \zeta$) that $Z_i=Y_i=0$ and $\bolds{\Delta
}_i=\bolds 0$ for $i<0$.

\begin{lem}
\label{cvpoints}
Endow $\mathbb(\mathbb R^2_+\times\mathcal S_1)^{{\mathbb Z}} \times
\mathbb R_+$ with the product topology. Then
for Lebesgue a.e. $z>0$, conditional on $\zeta=z$, we have
\begin{eqnarray*}
&& \biggl( (Z_{N_{\varepsilon}+n},Y_{N_{\varepsilon} +n},{\bolds{ \Delta
}}_{N_{\varepsilon} +n}
)_{n \in\mathbb Z},\frac{1}{\alpha}\log(\varepsilon/\zeta)-S_{N_{\varepsilon}}
\biggr)
\\
&&\qquad \stackrel{\mathit{law}} {\rightarrow} \bigl( \bigl(Z^{\mathrm
{bias}}_n,Y^{\mathrm
{bias}}_n,{
\bolds{\Delta}}^{\mathrm{bias}}_n \bigr)_{n \in\mathbb
Z},U\log
\bigl(Y^{\mathrm{bias}}_1\bigr) \bigr)
\end{eqnarray*}
as $\varepsilon\to0$, where $U$ is independent of the process $
(Z^{\mathrm{bias}}_n,Y^{\mathrm{bias}}_n,{\bolds{\Delta}}^{\mathrm
{bias}}_n )_{n \in\mathbb Z}$.
\end{lem}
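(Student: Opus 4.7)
The plan is to reduce convergence in the product topology to convergence of finite-dimensional marginals, and then combine the forward Markov property (for indices $\ge N_\eps + 2$) with an augmented Markov renewal argument (for indices up to $N_\eps + 1$ together with the overshoot). Fix $K \ge 1$, write $W_n := (Z_n, Y_n, {\bf \Delta}_n)$ and $r_\eps := \alpha^{-1}\log(\eps/\zeta)$; working conditional on $\zeta = z$ for Lebesgue almost every $z > 0$, it suffices to show
\[
\left((W_{N_\eps + n})_{-K \le n \le K}, \, r_\eps - S_{N_\eps}\right) \, \convdist \, \left((W^{\mathrm{bias}}_n)_{-K \le n \le K}, \, U \log Y^{\mathrm{bias}}_1\right).
\]

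First I would remove the future piece. Since $N_\eps + 1$ is a stopping time of $(S_n)$ relative to the natural filtration $\mathcal G_n = \sigma(W_0, \ldots, W_n)$, the strong Markov property at $N_\eps + 1$ gives that, conditional on $\mathcal G_{N_\eps + 1}$, the block $(W_{N_\eps + 2}, \ldots, W_{N_\eps + K})$ consists of $K - 1$ unconditioned steps of the Markov chain from $Z_{N_\eps + 1}$; and the conditional law of $W_{N_\eps + 1}$ given $\mathcal G_{N_\eps}$ is that of one step of the chain from $Z_{N_\eps}$ restricted (up to normalization) to the event $\{\log Y_1 > r_\eps - S_{N_\eps}\}$, this being precisely the constraint that $N_\eps$ be the largest renewal index with $S_n \le r_\eps$. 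Integrating out these forward steps, the task reduces to proving
\[
(W_{N_\eps - K}, \ldots, W_{N_\eps}, r_\eps - S_{N_\eps}) \, \convdist \, (W^{\mathrm{bias}}_{-K}, \ldots, W^{\mathrm{bias}}_0, U \log Y^{\mathrm{bias}}_1).
\]

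For this I would apply Corollary~\ref{CoroAlsmeyer} to the \emph{augmented} Markov renewal process $(\tilde M_n, S_n)$ with sliding-window state $\tilde M_n := (W_{n - K}, \ldots, W_n)$: the increment $\log Y_n$ remains a function of $\tilde M_n$, so $(\tilde M_n, S_n)$ is indeed a Markov renewal process. The augmented chain $(\tilde M_n)$ inherits positive Harris recurrence, strong aperiodicity, non-arithmeticity and the Foster-Lyapounov drift from the original driving chain, via the lifted Lyapounov function $(w_{-K}, \ldots, w_0) \mapsto V(z_0)$ with $V$ as in Lemma~\ref{lemFosterLyap} (whose expected one-step decrease is unchanged by the lifting). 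Its stationary distribution is the joint law of the stationary window $(W^{\mathrm{stat}}_{-K}, \ldots, W^{\mathrm{stat}}_0)$, and tilting by $\log Y_1^{\mathrm{stat}}$ reproduces, by the definition in Section~\ref{secStatPro}, the biased window $(W^{\mathrm{bias}}_{-K}, \ldots, W^{\mathrm{bias}}_0)$. Corollary~\ref{CoroAlsmeyer} then yields exactly the displayed limit.

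It remains to check that the two halves fit together, i.e., that the conditional law of $W^{\mathrm{bias}}_1$ given $(W^{\mathrm{bias}}_0, U \log Y^{\mathrm{bias}}_1) = (w_0, y)$ is one Markov step from $w_0$ conditioned on $\{\log Y_1 > y\}$. A direct unfolding of the biased law shows that the joint density of $(W^{\mathrm{bias}}_0, W^{\mathrm{bias}}_1, U \log Y^{\mathrm{bias}}_1)$ at $(w_0, w_1, y)$ is proportional to $\pi_{\mathrm{stat}}(w_0) \, k(w_1 \mid z_0) \, \I{0 \le y \le \log y_1}$, with $k$ the one-step Markov kernel: the factor $\log y_1$ from the bias cancels the conditional density of $U \log Y^{\mathrm{bias}}_1$ given $Y^{\mathrm{bias}}_1$. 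The main obstacle I expect is verifying Alsmeyer's hypotheses for the augmented chain $\tilde M$: lifting the small-set structure and the Foster-Lyapounov drift of Section~\ref{sec:ergodicity} to the sliding-window state space (small sets become $\{\tilde m : z_0 \in C\}$, with the $(K+1)$-step kernel dominating a fixed measure since the window is fully refreshed after $K + 1$ steps). This should follow by standard functions-of-Harris-chains arguments, but requires some careful bookkeeping.
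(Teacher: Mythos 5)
Your architecture differs genuinely from the paper's. You lift the renewal theorem to a sliding-window chain $\tilde M_n=(W_{n-K},\dots,W_n)$ and treat the indices $n\ge 1$ separately via the Markov property and the overshoot-conditioned kernel. The paper never augments the chain: it keeps the original Markov renewal process $(Z_n,S_n)$, shifts the summation index to $i=N_\eps-k$ (the left edge of the window), and packs the entire window \emph{and} the renewal constraint into the test function
$g(x,y)=\mathbb E\big[f\big((W_{k+n+1})_{n\ge -k},\,y-S_{k+1}\big)\I{S_{k+1}\le y<S_{k+2}}\,\big|\,Z_0=x\big]$,
to which Theorem~\ref{thm:Alsmeyer} applies directly; the biased window then emerges from a change of variables in the limiting integral. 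That route only ever uses the ergodic theory already established for $(Z_n)_{n\ge0}$ in Section~\ref{sec:ergodicity}, which is precisely what your route forfeits.

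The genuine gap is in the sentence asserting that $\tilde M$ ``inherits \dots non-arithmeticity'' from the driving chain. Arithmeticity in Alsmeyer's sense (equation~(\ref{def:arthm})) quantifies over shift functions $\gamma$ of the \emph{state}; on the lifted state space the admissible $\gamma$'s are functions of the whole window $(W_{n-K},\dots,W_n)$, a strictly larger class, so the lifted process could a priori be arithmetic even though $(Z_n,S_n)$ is not. Lemma~\ref{lem:arithmetic} only rules out shift functions of $Z_n$ alone, and its proof (which exploits the exponential splitting time $T_1$ and the structure of $\zeta=T_1+\Theta_1^{-\alpha}Z_1$) does not transfer verbatim; you would need a separate argument, e.g.\ reducing a window-dependent $\tilde\gamma$ to a $Z$-dependent one by conditioning on $Z_n$ and using that the window is refreshed after $K+1$ steps. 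The Harris/small-set/drift lifting you sketch is essentially fine (the minorization of $P(\cdot,\cdot)$ on the $Z$-marginal suffices after one extra step, since the driving-chain property makes the refreshed window a fixed kernel applied to $Z_1$), but the lattice condition is the one hypothesis that does not follow by soft arguments. A secondary, smaller issue: integrating out the forward steps $n\ge 1$ requires passing the overshoot-conditioned kernel $(z,y)\mapsto\mathcal L\big(W_1\,\big|\,Z_0=z,\ \log Y_1>y\big)$ through the weak limit, which needs (a.e.) continuity of $(z,y)\mapsto\Prob{\log Y_1>y\,|\,Z_0=z}$ and its positivity; this is available from (\ref{ProbaTransition}) and Remark~\ref{rem:densitypositive} but must be said. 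Both points are repairable, but as written the proof is incomplete where it matters most.
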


\begin{pf}
It is sufficient to prove that for all $k \geq1$ and Lebesgue a.e.
$z>0$, conditional on $\zeta=z$,
\begin{eqnarray*}
&& \biggl( (Z_{N_{\varepsilon}+n},Y_{N_{\varepsilon}+n},{\bolds{\Delta
}}_{N_{\varepsilon}+n}
)_{n \geq-k},\frac{1}{\alpha}\log(\varepsilon/\zeta)-S_{N_{\varepsilon}}
\biggr)
\\
&&\qquad \stackrel{\mathrm{law}} {\rightarrow} \bigl( \bigl(Z^{\mathrm
{bias}}_n,Y^{\mathrm
{bias}}_n,{
\bolds{\Delta}}^{\mathrm{bias}}_n \bigr)_{n \geq-k},U\log
\bigl(Y^{\mathrm{bias}}_1\bigr) \bigr).
\end{eqnarray*}
So, in the following, we fix $k \geq1$.

Recall that, conditionally on $\zeta=Z_0=z$, $N_\varepsilon\neq-\infty
$ for all $\varepsilon\leq z$. Moreover, $N_\varepsilon\rightarrow\infty$
as $\varepsilon\rightarrow0$ almost surely. It is therefore sufficient
to show that for Lebesgue a.e. $z>0$ and all bounded continuous
functions $f\dvtx (\mathbb R^2_+\times\mathcal S_1)^{{\mathbb Z}}\times
\mathbb R_+ \rightarrow\mathbb R$,
\begin{eqnarray*}
&&\mathbb E \biggl[f \biggl( (Z_{N_{\varepsilon}+n},Y_{N_{\varepsilon
}+n},{\bolds{
\Delta}}_{N_{\varepsilon}+n} )_{n \geq-k},\frac
{1}{\alpha}\log(
\varepsilon/z)-S_{N_{\varepsilon}} \biggr)\mathbh{1}_{
\{ N_{\varepsilon} \geq k+1 \} } \Big| Z_0=z \biggr]
\\
&&\qquad \rightarrow\mathbb E \bigl[f \bigl( \bigl(Z^{\mathrm
{bias}}_n,Y^{\mathrm{bias}}_n,{
\bolds\Delta}^{\mathrm{bias}}_n \bigr)_{n \geq-k},U\log
\bigl(Y^{\mathrm{bias}}_1\bigr) \bigr) \bigr].
\end{eqnarray*}
To show this, note that for $\varepsilon\leq z$,
\begin{eqnarray*}
&& \mathbb E \biggl[f \biggl( (Z_{N_{\varepsilon}+n},Y_{N_{\varepsilon
}+n},{\bolds
\Delta}_{N_{\varepsilon}+n} )_{n \geq-k},\frac
{1}{\alpha}\log(
\varepsilon/z)-S_{N_{\varepsilon}} \biggr)\mathbh{1}_{
\{ N_{\varepsilon} \geq k+1 \} } \Big|
Z_0=z \biggr]
\\
&&\qquad  = \sum_{i=1}^{\infty} \mathbb E \biggl[f
\biggl( (Z_{i+k+n},Y_{i+k+n},{\bolds\Delta}_{i+k+n}
)_{n \geq-k},\frac
{1}{\alpha}\log(\varepsilon/z)-S_{i+k} \biggr)
\\
&&\hspace*{158pt}{}\times\mathbh{1}_{ \{ S_{i+k}\leq {1}/{\alpha}\log
(\varepsilon/z)< S_{i+k+1} \} } \Big| Z_0=z \biggr]
\\
&&\qquad = \sum_{i=0}^{\infty} \mathbb E \biggl[ g
\biggl(Z_i, \frac
{1}{\alpha}\log(\varepsilon/z)-S_{i}
\biggr) \Big| Z_0=z \biggr],
\end{eqnarray*}
where
\begin{eqnarray*}
g(x,y)
&=& \mathbb E \Biggl[ f \Biggl( (Z_{k+n+1},Y_{k+n+1},{\bolds\Delta
}_{k+n+1} )_{n \geq-k},y-\sum_{j=1}^{k+1}
\log Y_j \Biggr)
\\
&&\hspace*{82pt}{}\times \mathbh{1}_{ \{ \sum_{j=1}^{k+1} \log Y_j \leq y< \sum
_{j=1}^{k+2} \log Y_j \} } \bigg| Z_0 =
x \Biggr],
\end{eqnarray*}
the first equality being a consequence of the definition of $N_\varepsilon
$ and the second of
the Markov property of the process. Note that $g$ satisfies the
assumptions of Theorem~\ref{teoAlsmeyer}; see Remark~\ref{remals}.
Consequently, as $\varepsilon\rightarrow0$, for Lebesgue a.e. $z>0$,
\begin{eqnarray*}
&&\mathbb{E} \biggl[f \biggl( (Z_{N_{\varepsilon}+n},Y_{N_{\varepsilon
}+n},{\bolds
\Delta}_{N_{\varepsilon}+n} )_{n \geq-k},\frac
{1}{\alpha}\log(
\varepsilon/z)-S_{N_{\varepsilon}} \biggr)\mathbh{1}_{
\{ N_{\varepsilon} \geq k+1 \} }
\Big|
Z_0=z \biggr]
\\
&&\qquad \rightarrow\frac{1}{\mu} \int_{\mathbb R_+} \int
_{\mathbb R_+} g(x,y)\, \mathrm dy\, \pi_{\mathrm{stat}}(\mathrm dx).
\end{eqnarray*}
Using the change of variables $u=(y-\sum_{j=1}^{k+1} \log Y_j )/\log
(Y_{k+2})$, we get, for $U$ uniform on $[0,1]$ and independent of the
process $(X,Y,{\bolds\Delta})$, that this limit can be written as
\begin{eqnarray*}
&& \frac{1}{\mu} \int_{\R_+} \mathbb{E} \bigl[
\log(Y_{k+2}) f \bigl( (Z_{k+n+1},Y_{k+n+1},{\bolds
\Delta}_{k+n+1} )_{n \geq
-k},U \log Y_{k+2} \bigr) \mid
Z_0 = x \bigr]
\\
&&\hspace*{26pt}{}\times  \pi_{\mathrm
{stat}}(\mathrm dx)
\\
&&\qquad = \frac{1}{\mu} \mathbb E \bigl[\log\bigl(Y^{\mathrm{stat}}_{k+2}
\bigr) f \bigl( \bigl(Z^{\mathrm{stat}}_{k+n+1},Y^{\mathrm{stat}}_{k+n+1},{
\bolds\Delta}^{\mathrm{stat}}_{k+n+1} \bigr)_{n \geq-k},U \log
Y^{\mathrm
{stat}}_{k+2} \bigr) \bigr]
\\
&&\qquad = \frac{1}{\mu} \mathbb E \bigl[\log\bigl(Y^{\mathrm{stat}}_{1}
\bigr) f \bigl( \bigl(Z^{\mathrm{stat}}_{n},Y^{\mathrm{stat}}_{n},{
\bolds\Delta}^{\mathrm{stat}}_{n} \bigr)_{n \geq-k},U \log
Y^{\mathrm
{stat}}_{1} \bigr) \bigr],
\end{eqnarray*}
by stationarity of the process $(Z^{\mathrm{stat}},Y^{\mathrm
{stat}},{\bolds\Delta}^{\mathrm{stat}})$.
\end{pf}

\begin{pf*}{Proof of Theorem~\ref{teolastfrag}}
Let $\varepsilon\leq\zeta$. Recall that for $0 < t \leq\zeta/\varepsilon$,
\[
N_{\varepsilon t} = \sup\Biggl\{n \ge0\dvtx  \prod_{i=0}^n
Y_i^{\alpha} \ge\varepsilon t \Biggr\} \neq-\infty
\]
and
\[
\varepsilon^{1/\alpha} F_*(\zeta- \varepsilon t) = \varepsilon^{1/\alpha}
Z_{N_{\varepsilon t}}^{1/\alpha} \prod_{i=0}^{N_{\varepsilon t}}
Y_i^{-1}.
\]
We will want to re-center all times around $N_{\varepsilon}$ (which is
$\neq-\infty$ since $\varepsilon\leq\zeta$). To this end, let
\[
R_\varepsilon(k) = \varepsilon^{-1} \prod
_{i=0}^{N_\varepsilon+k} Y_i^{\alpha}, \qquad k
\geq-N_{\varepsilon}
\]
so that $R_\varepsilon(k)$ is strictly decreasing in $k \geq-N_{\varepsilon
}$ and
\[
N_{\varepsilon t} = N_\varepsilon+ \sup\bigl\{k \ge-N_\varepsilon\dvtx
R_\varepsilon(k) \ge t \bigr\}.
\]
Note that $R_\varepsilon(k)=\varepsilon^{-1}(\zeta-T_{N_{\varepsilon}+k})$
and therefore that $(R_\varepsilon(k), k \geq-N_{\varepsilon}+1)$ is the
(decreasing) sequence of jump times of the process $(\varepsilon
^{1/\alpha}F_*(\zeta-\varepsilon t), t \geq0)$.
Re-centering times around $N_{\varepsilon}$, we obtain that $R_{\varepsilon
}(k)$ may be written as
%
\begin{equation}\label{defRepsk}
\qquad R_{\varepsilon}(k) = \cases{ \displaystyle\exp\bigl(\alpha
S_{N_{\varepsilon}}
- \log(\varepsilon/\zeta)\bigr) \prod_{i=1}^k
Y_{N_{\varepsilon}+i}^{\alpha}, &\quad if $k \ge1$,
\cr
\displaystyle\exp\bigl(\alpha
S_{N_{\varepsilon}} - \log(\varepsilon/\zeta)\bigr), &\quad if $k = 0$,
\cr
\displaystyle\exp\bigl(
\alpha S_{N_{\varepsilon}} - \log(\varepsilon/\zeta)\bigr) \prod
_{i=k+1}^0 Y_{N_{\varepsilon}+i}^{-\alpha}, &\quad
if $-N_{\varepsilon} \le k \le-1$.}
\end{equation}
Similar to the construction of $C_{\infty,*}$, the process
$(\varepsilon^{1/\alpha}F_*(\zeta-\varepsilon t), t \ge0)$ is piecewise
constant and may be constructed from $(Z_n,Y_n)_{n \geq0}$ as follows:
for $0<t \leq\zeta/\varepsilon$,
%
\begin{eqnarray}
\varepsilon^{1/\alpha} F_*(\zeta- \varepsilon t) = (Z_{N_{\varepsilon
}+k})^{1/\alpha}
\bigl(R_{\varepsilon}(k)\bigr)^{-1/\alpha}\nonumber
\\
\eqntext{\mbox{when } t \in
\bigl(R_{\varepsilon}(k+1),R_{\varepsilon}(k) \bigr]\mbox{ }\forall k \geq
-N_{\varepsilon}.}
\end{eqnarray}

Next, by Lemma~\ref{cvpoints} and the Skorokhod representation theorem
[the space $\mathbb(\mathbb R^2_+\times\mathcal S_1)^{{\mathbb Z}}
\times\mathbb R_+$ is Polish], for Lebesgue a.e. $z>0$, there exists
for all $\varepsilon>0$ a version of
\[
\biggl( (Z_{N_{\varepsilon}+n},Y_{N_{\varepsilon}+n},{\bolds{ \Delta
}}_{N_{\varepsilon}+n}
)_{n \in\mathbb Z},\frac{1}{\alpha}\log(\varepsilon/\zeta)-S_{N_{\varepsilon}}
\biggr) \Big|\zeta=z
\]
that converges almost surely as $\varepsilon\rightarrow0$ to a
version of $ ( (Z^{\mathrm{bias}},Y^{\mathrm{bias}},{\bolds
{\Delta}}^{\mathrm{bias}} ),\break U\log(Y^{\mathrm{bias}}_1)
)$. Then for all $t>0$ and all $\varepsilon\leq z$, construct from this
new version a process $\varepsilon^{1/\alpha}(\tilde F_*(\tilde\zeta-
\varepsilon t),t \geq0)$ (with $\tilde\zeta=z)$, exactly as $\varepsilon
^{1/\alpha}(F_*(\zeta-\varepsilon t),t \geq0)$ is constructed above from
\[
\biggl( (Z_{N_{\varepsilon}+n},Y_{N_{\varepsilon}+n},{\bolds{ \Delta
}}_{N_{\varepsilon}+n}
)_{n \in\mathbb Z},\frac{1}{\alpha}\log(\varepsilon/\zeta)-S_{N_{\varepsilon}}
\biggr).
\]
By Lemma~\ref{Skocv} in the \hyperref[appendix]{Appendix}, the c\`adl\`ag process $\varepsilon
^{1/\alpha}(\tilde F_*((\tilde\zeta- \varepsilon t)-), t \geq0 )$ then
converges almost surely as $\varepsilon\rightarrow0$ to a process
which is distributed as~$C_{\infty,*}$.
\end{pf*}

\section{The spine decomposition for the fragmentation} \label{Spine}

We are now ready to introduce our spine decomposition for a
fragmentation process. It may help the reader to refer to Figure~\ref
{figspinedecomp}. We need a little notation. Write $\bar{F}^{(x)}$ to
denote the (left-continuous) time-reversal of a fragmentation process
$F$ conditioned to become extinct before time $x$, that is, $\bar
{F}^{(x)}(0) = \mathbf{0}$, $\bar{F}^{(x)}(x) = \mathbf{1}$, $\bar
{F}^{(x)}$ is l\`adc\`ag on $\mathbb R_+$, and for any suitable test
function $f$,
\[
\mathbb{E} \bigl[f \bigl( \bar{F}^{(x)}(t), t \ge0 \bigr) \bigr] =
\mathbb{E} \bigl[f \bigl(F(x - t) \bigr)\mathbh{1}_{ \{ 0
\le t \le x \} }+f(\mathbf0)
\mathbh{1}_{ \{ t>x
\} }\mid\zeta< x \bigr].
\]
[We emphasize that $\bar{F}^{(x)}(t) = \mathbf{0}$ for $t > x$.]
Note that since $\bar{F}^{(x)}$ is l\`adc\`ag, the process $(\bar
{F}^{(x)}(t +),t \geq0)$ is c\`adl\`ag. Moreover, the probability that
a fragmentation process jumps at a fixed deterministic time $t$ is 0.
(This can be seen as a consequence of its Poissonian construction \cite
{BertoinHom,BertoinSSF}. Equivalently, and in a more elementary way
since the dislocation measure is finite here, this can be seen using
the genealogical description of the fragmentation\vspace*{1pt} developed in
Chapter~1.2 of \cite{BertoinBook}.)
It is clear from its definition that $\bar F^{(x)}$ inherits this
property on $(0,x)$.

Recall the definitions of $N_{\varepsilon}$ and $R_{\varepsilon}(k)$ from
(\ref{defNeps}) and (\ref{defRepsk}), respectively.

\begin{prop}[(Spine decomposition)] \label{propspine}
On the event $\{N_{\varepsilon}\neq-\infty\}=\{\varepsilon\leq\zeta\}$,
the process $(F(\zeta- \varepsilon t), 0 < t \le\zeta/\varepsilon)$ can
be rewritten in the form
\begin{eqnarray*}
\hspace*{-5pt}&& \Biggl( \Biggl\{ \prod_{j=0}^{N_{\varepsilon}+K_{\varepsilon}(t)} \Theta_j,
\\
\hspace*{-5pt}&&\qquad{} \Biggl( \prod_{j=0}^{N_{\varepsilon}+i-1}
\Theta_j \Biggr) \Delta_{N_{\varepsilon}+i,m} \bar{F}_{i,m}^{(\Delta
_{N_{\varepsilon} +
i,m}^{\alpha} \Theta_{N_{\varepsilon} + i}^{-\alpha} Z_{N_{\varepsilon}
+ i})}
\Biggl(\varepsilon t \Biggl(\prod_{j=0}^{N_{\varepsilon} +i-1}
\Theta_j \Biggr)^{\alpha} \Delta_{N_{\varepsilon} + i,m}^{\alpha
}\Biggr)\dvtx
\\
\hspace*{-5pt}&&\hspace*{153pt} m \ge1, -N_{\varepsilon}+1 \le i \le K_{\varepsilon}(t) \Biggr
\}^{\downarrow}, 0 < t \le\zeta/\varepsilon\Biggr),
\end{eqnarray*}
where $K_{\varepsilon}(t)$ is the unique integer $k \ge-N_{\varepsilon}$
such that $R_{\varepsilon}(k) \ge t > R_{\varepsilon}(k+1)$, and $\bar
{F}_{i,m}^{(\Delta_{N_{\varepsilon} + i,m}^{\alpha} \Theta
_{N_{\varepsilon} + i}^{-\alpha} Z_{N_{\varepsilon} + i})}, i \in\Z, m
\ge1$ is a collection of conditioned fragmentation processes which are
independent for distinct $i$ and $m$, conditionally on $(Z_n, \Theta
_n, {\bolds\Delta}_n)_{n \ge0}$.
\end{prop}

Although this expression may seem a little intimidating, the idea
behind it is simple: the decreasing sequence $F(\zeta-\varepsilon t)$ is
composed of $F_*(\zeta-\varepsilon t)$ (the spine term) and the masses of
fragments coming from the fragmentation of all blocks that detached
from the spine $F_*$ before time $\zeta-\varepsilon t$.

\begin{pf*}{Proof of Proposition \ref{propspine}}
Consider the state of the fragmentation at some time $\zeta- \varepsilon
t$. Each block present is either the last fragment, or descends from a
block which split off from the last fragment at time $T_n$ for some $1
\le n \le N_{\varepsilon t}$ (this ensures that $T_n \le\zeta- {\varepsilon
t}$). In other words, the current state may be written as the
decreasing rearrangement of the blocks of
%
\begin{equation}
\label{state} \bigl( F_*(\zeta-\varepsilon t), \tilde{F}_{n,m}(\zeta-
\varepsilon t - T_n), m \ge1, 1 \le n \le N_{\varepsilon t} \bigr),
\end{equation}
where $\tilde{F}_{n,m}(s)$ represents the collection of blocks present
at time $T_n + s$ which are descended from the $m$th-largest block to
split off from $F_*$ at time $T_n$. Note that the process $\tilde
{F}_{n,m}$ must itself have extinction time at most $\zeta- T_n$
(since it must die before the last fragment), that is, $\tilde
{F}_{n,m}(s) = \mathbf{0}$ for some $s < \zeta- T_n$.

By construction,
\[
F_*(T_n) = \prod_{j=0}^n
\Theta_j.
\]
For $K_{\varepsilon}(t)$ defined to be the unique integer $k \ge
-N_{\varepsilon}$ such that $R_{\varepsilon}(k) \ge t > R_{\varepsilon
}(k+1)$, we have $N_{\varepsilon t} = N_{\varepsilon} + K_{\varepsilon}(t)$
and so
\[
F_*(\zeta-\varepsilon t) = F_*(T_{N_{\varepsilon t}}) = \prod
_{j=0}^{N_{\varepsilon t}} \Theta_j = \prod
_{j=0}^{N_{\varepsilon} +
K_{\varepsilon}(t)} \Theta_j.
\]
For $1 \le n \le N_{\varepsilon} + K_{\varepsilon}(t)$, the blocks
descending from the last fragment at time $T_{n-1}$ which split off
from the last fragment at time $T_n$ have sizes $\{F_*(T_{n-1}) \Delta
_{n,m}, m \ge1\}$, that is, $\tilde{F}_{n,m}(0) = F_*(T_{n-1})
\Delta_{n,m}$. Note that
\[
F_*(T_{n-1}) \Delta_{n,m} = \Biggl(\prod
_{j=0}^{n-1} \Theta_j \Biggr)
\Delta_{n,m}.
\]
Let us write $H_{n,m}(s) = (\tilde{F}_{n,m}(0))^{-1} \tilde
{F}_{n,m} ( \tilde{F}_{n,m}(0)^{-\alpha} s )$ for $\tilde
{F}_{n,m}$ with its natural time- and space-rescaling, in order that we
may later exploit the scaling property. We can then rewrite (\ref
{state}) as
\begin{eqnarray*}
&& \Biggl( \prod_{j=0}^{N_{\varepsilon} +K_{\varepsilon}(t)}
\Theta_j, \Biggl(\prod_{j=0}^{n-1}
\Theta_j \Biggr) \Delta_{n,m} H_{n,m} \Biggl( (
\zeta- \varepsilon t - T_n) \Biggl(\prod_{j=0}^{n-1}
\Theta_j \Delta_{n,m} \Biggr)^{\alpha} \Biggr),
\\
&&\hspace*{185pt} m \ge1, 1 \le n \le N_{\varepsilon} + K_{\varepsilon}(t)
\Biggr).
\end{eqnarray*}
Now observe that
\[
(\zeta- T_n) \Biggl(\prod_{j=0}^{n-1}
\Theta_j \Delta_{n,m} \Biggr)^{\alpha} =
Z_n \Theta_n^{-\alpha} \Delta_{n,m}^{\alpha},
\]
so that we in fact have
%
\begin{eqnarray}\label{state2}
&& \Biggl(\prod_{j=0}^{N_{\varepsilon} +K_{\varepsilon}(t)}
\Theta_j, \Biggl(\prod_{j=0}^{n-1}
\Theta_j \Biggr) \Delta_{n,m} H_{n,m} \Biggl(
Z_n \Theta_n^{-\alpha} \Delta_{n,m}^{\alpha}
- \varepsilon t \Biggl(\prod_{j=0}^{n-1}
\Theta_j \Biggr)^{\alpha} \Delta_{n,m}^{\alpha}
\Biggr),
\nonumber\\[-8pt]\\[-8pt]
&&\hspace*{203pt} m \ge1, 1 \le n \le N_{\varepsilon} + K_{\varepsilon}(t)
\Biggr).\hspace*{-20pt}\nonumber
\end{eqnarray}
So far, we know that $H_{n,m}$ is some sort of fragmentation process
which is started from $H_{n,m}(0) = \mathbf{1}$ and becomes extinct
before time $Z_n \Theta_n^{-\alpha} \Delta_{n,m}^{\alpha}$.

Suppose, temporarily, that we are on the event $\{N_{\varepsilon} +
K_{\varepsilon}(t) = 1\}$; in other words, by time $\zeta- \varepsilon t$,
the last fragment has split exactly once. Then, in the notation
introduced just before Lemma~\ref{TheLemma}, $H_{1,m} = H^{(I,m)}$,
and Lemma~\ref{TheLemma} entails that, \mbox{conditionally} on $(\Theta_0,
\Theta_1, Z_1, \Delta_{1,m}, m \ge1)$, $H_{1,m}$ is distributed as a
standard fragmentation process conditioned to become extinct before
time $\Delta_{1,m}^{\alpha} \Theta_1^{-\alpha} Z_1$, independently
for different $m \ge1$.
It follows that, in this case, (\ref{state2}) is distributed as
\[
\bigl(\Theta_0\Theta_1, \Theta_0
\Delta_{1,m} \bar{F}_{1,m}^{(\Delta_{1,m}^{\alpha} \Theta_1^{-\alpha}
Z_1)} \bigl(\varepsilon t
\Theta_0^{\alpha} \Delta_{1,m}^{\alpha} \bigr)
\bigr).
\]

To get to the result for general $N_{\varepsilon}$ and $K_{\varepsilon
}(t)$, note that Lemma~\ref{TheLemma} also tells us that,
conditionally on $(\Theta_0, \Theta_1, Z_1, \Delta_{1,m}, m \ge1)$,
the evolution of the last fragment after its first split (suitably
rescaled) is independent of the evolution of $H_{1,m}$ for $m \ge1$.
So we may apply Lemma~\ref{TheLemma} inductively, just as we did in
the proof of Proposition~\ref{propMarkovprop}, to obtain that (\ref
{state2}) has the same distribution as
\begin{eqnarray*}
&& \Biggl( \prod_{j=0}^{N_{\varepsilon} +K_{\varepsilon}(t)}
\Theta_j, \Biggl(\prod_{j=0}^{n-1}
\Theta_j \Biggr)\Delta_{n,m}
\bar{F}_{n,m}^{(\Delta_{n,m}^{\alpha} \Theta
_n^{-\alpha}Z_n)} \Biggl(\varepsilon t \Biggl(\prod
_{j=0}^{n-1} \Theta_j
\Biggr)^{\alpha} \Delta_{n,m}^{\alpha}
\Biggr),
\\
&& \hspace*{170pt} m \ge1, 1 \le n \le N_{\varepsilon}+K_{\varepsilon}(t) \Biggr).
\end{eqnarray*}
Finally, we will find it convenient to index the split times in such a
way that index $N_{\varepsilon}$ becomes 0. So we simply shift the
indices down by $N_{\varepsilon}$ (i.e., set $n = N_{\varepsilon} + i$).
Now notice that everything we have done here is consistent as we vary
$t$ in $\R_+$, and so we obtain the desired result.
\end{pf*}

So far, we have mainly thought of the spine decomposition in terms of
the forward direction of time for the fragmentation $(F(t), 0 \le t \le
\zeta)$, with blocks gradually detaching from the spine and then
further fragmenting until such a time as they are reduced to dust. We
now adopt the opposite perspective and view $\varepsilon^{1/\alpha
}F(\zeta- \varepsilon\cdot)$ as being composed of a spine plus other
blocks which immigrate into the system and gradually coalesce with one
another, before eventually coalescing with the spine. We group the
nonspine blocks together into sub-collections formed of those which
will attach to the spine at the same time. To this end, for $i \ge
-N_{\varepsilon}+1$, $m \ge1$ and $t \ge0$, define
\begin{eqnarray*}
H^{\varepsilon}_{i,m}(t) &=& \frac{\Delta_{N_{\varepsilon}+
i,m}Z_{N_{\varepsilon_n} + i-1}^{1/\alpha}}{(R_{\varepsilon
}(i-1))^{1/\alpha}}
\\
&&{}\times \bar{F}_{i,m}^{(Z_{N_{\varepsilon}+i-1}
Y_{N_{\varepsilon}+i}^{\alpha} \Delta^{\alpha}_{N_{\varepsilon}+i,m} )}
\bigl( t \Delta_{N_{\varepsilon}+i,m}^{\alpha}Z_{N_{\varepsilon}+i-1}
\bigl(R_{\varepsilon}(i-1)\bigr)^{-1} + \bigr),
\end{eqnarray*}
where\vspace*{1pt} $\bar{F}_{i,m}^{(Z_{N_{\varepsilon}+i-1} Y_{N_{\varepsilon
}+i}^{\alpha} \Delta^{\alpha}_{N_{\varepsilon}+i,m} )}$, $i \in
\mathbb Z, m \geq1$ is a collection of conditioned time-reversed
fragmentation processes which are independent for distinct $i$ and $m$,
conditionally on $(Z_n, Y_n, {\bolds\Delta}_n)_{n \ge0}$.
Let $H^{\varepsilon,\downarrow}_i(t)$ be the decreasing rearrangement of
all terms involved in the sequences $H^{\varepsilon}_{i,m}(t)$, $m \geq
1$. [Note that $H^{\varepsilon,\downarrow}_i(t) \in\mathcal S$ since
$\sum_{m \geq1}\Delta_{N_{\varepsilon}+ i,m} \leq1$.] Thus,
$H^{\varepsilon,\downarrow}_i$ tracks the evolution of the collection of
blocks which attach to the spine at time $R_{\varepsilon}(i)$. The spine
coalesces with other blocks only at times $R_{\varepsilon}(k), k \ge
-N_{\varepsilon}+1$.

Using this new notation, we can rewrite the expression for the spine
decomposition in Proposition~\ref{propspine} in a form more adapted
to our purposes.

\begin{cor} \label{corspine}
Suppose that $t \in[R_{\varepsilon}(k+1), R_{\varepsilon}(k)
)$ for some $k \geq-N_{\varepsilon}$. Then $\varepsilon^{1/\alpha}
F((\zeta- \varepsilon t)-)$ is the decreasing rearrangement of the masses
which make up:
\begin{itemize}
\item $Z_{N_{\varepsilon} + k}^{1/\alpha} (R_{\varepsilon
}(k))^{-1/\alpha}$;
\item $H^{\varepsilon, \downarrow}_{i}(t)$,
$-N_{\varepsilon}+1 \le i \le k$.
\end{itemize}
\end{cor}

By Lemma~\ref{cvpoints}, it is then, more or less, clear what the
limit process should be. Recall that $(Z^{\mathrm{bias}}_n, \Theta
^{\mathrm{bias}}_n, {\bolds\Delta}_n^{\mathrm{bias}})_{n \in\mathbb
Z}$ is the biased Markov chain introduced in Section~\ref{secStatPro}.
Let
\[
H_{i,m}(t)= \frac{\Delta^{\mathrm{bias}}_{i,m}(Z^{\mathrm
{bias}}_{i-1})^{1/\alpha}}{(R(i-1))^{1/\alpha}} \bar
{F}_{i,m}^{(Z^{\mathrm{bias}}_{i-1} (Y^{\mathrm{bias}}_{i})^{\alpha}
(\Delta_{i,m}^{\mathrm{bias}})^{\alpha} )}
\bigl( t \bigl(\Delta^{\mathrm{bias}}_{i,m}\bigr)^{\alpha}
Z_{i-1} ^{\mathrm{bias}} \bigl(R(i-1)\bigr)^{-1}+ \bigr),
\]
where $\bar{F}_{i,m}^{( Z_{i-1}^{\mathrm{bias}}(Y_i^{\mathrm
{bias}})^{\alpha} (\Delta_{i,m}^{\mathrm{bias}})^{\alpha})}$, $i
\in\mathbb Z, m \geq1$ is a collection of conditioned time-reversed
fragmentation processes which are independent for distinct $i$ and $m$,
conditionally on the chain $(Z^{\mathrm{bias}}_n, Y^{\mathrm
{bias}}_n, {\bolds\Delta}_n^{\mathrm{bias}})_{n \in\mathbb Z}$. Let
$H^{\downarrow}_i(t)$ be the decreasing \mbox{rearrangement} of all terms
involved in the sequences $H_{i,m}(t)$, $m \geq1$.

\begin{defn} \label{defnCinfty}
Let $C_{\infty}(0) = \mathbf{0}$.
For all $k \in\mathbb Z$ and all $t \in[R(k+1),R(k) )$,
let $C_{\infty}(t)$ be the decreasing rearrangement of the masses
which make up:
\begin{itemize}
\item $(Z_k^{\mathrm{bias}}) ^{1/\alpha}
(R(k))^{-1/\alpha}$;\vspace*{1pt}

\item $H^{\downarrow}_{i}(t)$,
$i \le k$.
\end{itemize}
\end{defn}

\begin{figure}

\includegraphics{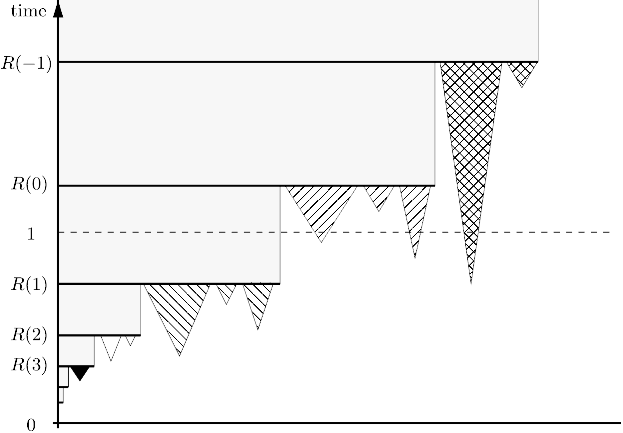}

\caption{The process $C_{\infty}$. The mass of the spine, which is piecewise constant, is shaded in
gray.  For each $k$, the mass of the spine at time $R(k)$ is the coagulation of the mass of the spine at
time $R(k)-$ together with the masses $H_k^{\downarrow}(R(k)-)$ of some other fragments present at
time $R(k)-$. The collections of patterned triangles represent, from left to right, the processes
$H_3^{\downarrow}$, $H_2^{\downarrow}$, $H_1 ^{\downarrow}$, $H_0^{\downarrow}$ and $H_{-
1}^{\downarrow}$ respectively.}\label{figCinfini}
\end{figure}

See Figure~\ref{figCinfini} for an illustration. In a rough sense,
the process $C_{\infty}$ models the evolution of masses that coalesce,
with a regular immigration of infinitesimally small masses. It turns
out that reversing time,
the distribution of $C_{\infty}$ can be related to the distribution of
a transformed biased fragmentation process in the following way. For
all $a$, recall that $C_{\infty,*}(a)$ denotes the mass at time $a$ of
the spine. For $0\leq t \leq a$, let $C_{\infty}^{a}(t)$ denote the
subsequence of $C_{\infty}(t)$ composed of all of the blocks which
will contribute to the mass $C_{\infty,*}(a)$ at time $a$. In other
words, we are looking at the coagulation history of $C_{\infty,*}(a)$.
Note that, for a fixed time $t$, each block of $C_{\infty}(t)$ belongs
to a sequence $C_{\infty}^{a}(t)$ for some $a$ sufficiently large. We
are interested in the distribution of the $(C_{\infty}^{a}(t),0\leq t
\leq a)$ process. By\vspace*{1pt} self-similarity it has the same distribution as
$(a^{1/\alpha}C_{\infty}^{1}(at),0\leq t \leq1)$, so we can focus on
the $C_{\infty}^{1}$ process. The proposition below connects the
distribution of this process to that of a biased fragmentation process.
We need the following elements:
\begin{itemize}
\item Let $Z_0^{\mathrm{stat}}$ be distributed according to $\pi
_{\mathrm{stat}}$, and independently, let $F$ be a fragmentation process.
\item Let $F_{\mathrm{stat}}$ be distributed as the process
$(Z_0^{\mathrm{stat}})^{1/\alpha} F (Z_0^{\mathrm{stat}} \cdot)$
conditioned to die at time $1$. Let $T_{\mathrm{stat},1}$ be the first
jump time of $F_{\mathrm{stat}}$.
\item Independently, let $U$ be uniformly distributed on $[0,1]$.
\end{itemize}

\begin{prop} For all test functions $\phi$,
\label{CF}
%
\begin{eqnarray}\label{idCinfinity}
&& \mathbb E \bigl[\phi\bigl( C^1_{\infty}(t),
0 \leq t \leq1 \bigr) \bigr]\nonumber
\\
&& \qquad=\mathbb E \bigl[\log(1-T_{\mathrm{stat},1})
\nonumber\\[-8pt]\\[-8pt]\nonumber
&&\hspace*{42pt}{}\times \phi
\bigl((1-T_{\mathrm{stat},1})^{U/\alpha}F_{\mathrm{stat}}
\bigl(1- (1-T_{\mathrm{stat},1})^{U}t \bigr), 0 \leq t \leq1
\bigr) \bigr]
\\
&&\quad\qquad{} \times \bigl(\mathbb E \bigl[ \log(1-T_{\mathrm{stat},1}) \bigr]\bigr)^{-1}.\nonumber
\end{eqnarray}
\end{prop}

\begin{pf} First note that
%
\begin{equation}
\label{eqstat} \mathbb E \bigl[\phi(F_{\mathrm{stat}} ) \bigr] = \int
_{\mathbb R_+} \mathbb E \bigl[ \phi\bigl(x^{1/\alpha}F(x \cdot)
\bigr) \mid\zeta=x \bigr] \pi_{\mathrm{stat}}(\mathrm dx).
\end{equation}
Recall that the fragmentation $F$ can be constructed from the Markov
chain $(Z_n,Y_n,\Delta_n)_{n \geq0}$ and a collection of conditioned
fragmentation processes $\bar F_{i,m}$:  roughly, $F$ is then composed of
a spine $(F_*(T_n), n\geq1)$, where for $n \geq1$
\[
T_n=Z_0-Z_0 \prod
_{i=1}^n Y_i^{\alpha}, \qquad
F_*(T_n)=\prod_{i=1}^n
\Theta_i=\frac{Z_n^{1/\alpha}}{Z_0^{1/\alpha}\prod_{i=1}^n Y _i}, %
\]
from which, at each time $T_{n+1}$, blocks split off to give rise to
conditioned fragmentation processes
\[
\frac{Z_n^{1/\alpha}\Delta_{n+1,m}}{Z_0^{1/\alpha} \prod_{i=1}^n
Y_i} \bar F_{n+1,m}^{(\Delta_{n+1,m}^{\alpha}Z_nY_{n+1}^{\alpha})}
\biggl(
\Delta_{n+1,m}^{\alpha}Z_n \biggl(Y_{n+1}^{\alpha}
-\frac
{(\cdot-T_{n+1})}{Z_0 \prod_{i=1}^n Y^{\alpha}_i} \biggr) \biggr).
\]
These conditioned processes are independent given $(Z_n,Y_n,\Delta
_n)_{n \geq0}$.
From (\ref{eqstat}), we see that $F_{\mathrm{stat}}$ is constructed
similarly from $(Z^{\mathrm{stat}}_n,Y^{\mathrm{stat}}_n,\Delta
^{\mathrm{stat}}_n)_{n \geq0}$, a stationary version of
$(Z_n,Y_n,\Delta_n)_{n \geq0}$, and a collection of conditioned
fragmentation processes as follows: $F_{\mathrm{stat}}$ is composed of
a spine $(F_{\mathrm{stat},*}(T_{\mathrm{stat},n}), n\geq1)$, where
for $n \geq1$
\[
T_{\mathrm{stat},n}=1- \prod_{i=1}^n
\bigl(Y^{\mathrm{stat}}_i\bigr)^{\alpha
}, \qquad
F_*(T_{\mathrm{stat},n})=\frac{(Z^{\mathrm
{stat}}_n)^{1/\alpha}}{\prod_{i=1}^n Y^{\mathrm{stat}}_i}, %
\]
and from this spine, blocks split off at times $T_{\mathrm
{stat},{n+1}}$ to give rise to conditioned fragmentation processes
\begin{eqnarray*}
&& \frac{(Z^{\mathrm{stat}}_n)^{1/\alpha}\Delta^{\mathrm
{stat}}_{n+1,m}}{ \prod_{i=1}^n Y^{\mathrm{stat}}_i}
\\
&&\qquad{} \times\bar
F_{n+1,m}^{((\Delta^{\mathrm{stat}}_{n+1,m})^{\alpha}Z^{\mathrm
{stat}}_n(Y^{\mathrm{stat}}_{n+1})^{\alpha})} \biggl(\bigl(\Delta
^{\mathrm{stat}}_{n+1,m}\bigr)^{\alpha} Z^{\mathrm{stat}}_n
\biggl(\bigl(Y^{\mathrm{stat}}_{n+1}\bigr)^{\alpha}-
\frac{ (\cdot-T_{\mathrm
{stat},n+1} )}{ \prod_{i=1}^n (Y^{\mathrm{stat}}_i)^{\alpha}} \biggr)
\biggr).
\end{eqnarray*}
To finish, multiply $F_{\mathrm{stat}}$ by $(1-T_{\mathrm
{stat},1})^{U/\alpha}$, perform the time change $t \mapsto
1-(1-T_{\mathrm{stat},1})^{U} t$ and
note that $1-T_{\mathrm{stat},1}=(Y_1^{\mathrm{stat}})^{\alpha}$. In
order to obtain the expression in (\ref{idCinfinity}), we must now
take a biased version of this stationary construction. It suffices to
compare this biased, scaled and time-changed version of $F_{\mathrm
{stat}}$ with Definition~\ref{defnCinfty} to conclude the argument.
\end{pf}

\section{Convergence of the full fragmentation}\label{fullfrag}

The aim of this section is to prove Theorem~\ref{teomainabstract}.
Throughout, we will assume that $\nu$ is nongeometric and that $\int
_{\mathcal S}s_1^{-1-\rho}\nu(\mathrm d \mathbf s)<\infty$ for some
$\rho>0$.
We start by establishing several preliminary lemmas.

\subsection{Preliminary lemmas}

We first deal with an important redundancy in our expression for
$(C_{\infty}(t), t \ge0)$: for each time $t$, most of the
$H_{i,m}(t)$ do not contribute.

\begin{lem} \label{lemfinite}
Consider the expression for $(C_{\infty}(t), t \ge0)$ given in
Definition~\ref{defnCinfty}. Then almost surely for all $t>0$,
$t\notin\{R(k),k \in\mathbb Z\}$, only finitely many indices $i$ and
$m$ contribute nonzero blocks to $C_{\infty}(t)$.
\end{lem}

\begin{pf} We start by proving that only finitely many indices $i$ and
$m$ contribute nonzero blocks to the state a.s. for a \emph{fixed}
$t > 0$. By the self-similarity of $C_{\infty}$, it suffices to prove
that this holds for $t=1$. Recall, moreover, that $R(1) < 1 < R(0)$ a.s.
By the first Borel--Cantelli lemma, it suffices to check that the
following sum is almost surely finite:
%
\begin{eqnarray}\label{eqnsum}
&& \sum_{i \le0} \sum_{m=1}^{\infty}
\mathbb{P} \bigl(H_{i,m}(1) \neq\mathbf{0} \mid Z^{\mathrm
{bias}},Y^{\mathrm{bias}},
\Delta^{\mathrm{bias}} \bigr)
\nonumber
\\
&&\qquad  = \sum_{i\le0} \sum_{m =1}^{\infty}
\mathbb{P} \bigl(\bar{F}_{i,m}^{(Z_{i-1}^{\mathrm{bias}}
(Y_i^{\mathrm
{bias}})^{\alpha}
(\Delta_{i,m}^{\mathrm{bias}})^{\alpha} )}
\bigl( \bigl(\Delta
_{i,m}^{\mathrm{bias}}\bigr)^{\alpha}Z_{i-1}^{\mathrm{bias}}
\bigl(R(i-1)\bigr)^{-1} + \bigr) \neq\mathbf{0} \mid
\\
&&\hspace*{244pt} Z^{\mathrm
{bias}},Y^{\mathrm{bias}}, \Delta^{\mathrm{bias}} \bigr).\hspace*{-20pt} \nonumber
\end{eqnarray}
For any $x > 0$ and any $0 \le u \le x$,
\[
\mathbb{P} \bigl(\bar{F}^{(x)}(u) \neq\mathbf{0} \bigr) = \mathbb{P} (
\zeta> x-u \mid\zeta< x ) = \frac{\mathbb
{F}_{\zeta}(x) - \mathbb{F}_{\zeta}(x-u)}{\mathbb{F}_{\zeta}(x)}.
\]
Using Lemma~\ref{LemmaDensity}(ii), we see that
\[
\mathbb{P} \bigl(\bar{F}^{(x)}(u) \neq\mathbf{0} \bigr) \le d
\frac{u \exp(-c(x-u))}{\mathbb{F}_{\zeta}(x)}
\]
for some constants $c,d > 0$. Hence (\ref{eqnsum}) is bounded above by
\[
d\sum_{i\le0} \sum_{m =1}^{\infty}
Z_{i-1}^{\mathrm{bias}} \bigl(\Delta_{i,m}^{\mathrm{bias}}
\bigr)^{\alpha} \bigl(R(i-1)\bigr)^{-1}\frac{\exp( -c
Z_{i-1}^{\mathrm{bias}} (Y_i^{\mathrm{bias}})^{\alpha} (\Delta
_{i,m}^{\mathrm{bias}})^{\alpha} (1 - (R(i))^{-1} )
)}{
\mathbb{F}_{\zeta} (Z_{i-1}^{\mathrm{bias}} (Y_i^{\mathrm
{bias}})^{\alpha} (\Delta_{i,m}^{\mathrm{bias}})^{\alpha} ) }
\]
[note that $R(i) > 1 $ for all $i \leq0$].
Using the monotonicity of $\mathbb{F}_{\zeta}$, we see that we only
require the finiteness of
%
\begin{eqnarray}
\label{eqwantfinite} && \sum_{i\le0} \frac{(R(i))^{-1} } {\mathbb{F}_{\zeta}
(Z_{i-1}^{\mathrm{bias}} (Y_i^{\mathrm{bias}})^{\alpha} )}
\sum
_{m =1}^{\infty} Z_{i-1}^{\mathrm{bias}}
\bigl(Y_i^{\mathrm{bias}}\bigr)^{\alpha} \bigl(
\Delta_{i,m}^{\mathrm{bias}}\bigr)^{\alpha}
\nonumber\\[-8pt]\\[-8pt]\nonumber
&& \hspace*{114pt}{}\times \exp\bigl( -c
Z_{i-1}^{\mathrm{bias}} \bigl(Y_i^{\mathrm{bias}}
\bigr)^{\alpha} \bigl(\Delta_{i,m}^{\mathrm{bias}}
\bigr)^{\alpha} \bigl(1 - \bigl(R(i)\bigr)^{-1} \bigr) \bigr).\hspace*{-15pt}
\end{eqnarray}
Since $\sum_{m =1}^{\infty}\Delta^{\mathrm{bias}}_{i,m} < 1$, we
have $\Delta^{\mathrm{bias}}_{i,m} \le m^{-1}$, and so the inner sum
in $m$ is bounded above by
%
\begin{equation}
\label{eqinner} C \bigl(1 - \bigl(R(i)\bigr)^{-1} \bigr)^{-1}
\sum_{m =1}^{\infty} \exp\bigl(
-c' Z_{i-1}^{\mathrm{bias}} \bigl(Y_i^{\mathrm{bias}}
\bigr)^{\alpha} m^{-\alpha} \bigl(1 - \bigl(R(i)\bigr)^{-1}
\bigr) \bigr),
\end{equation}
for some constants $C > 0$ and $0 < c' < c$. Now observe that for
$\theta> 0$,
\[
\sum_{m=1}^{\infty} \exp\bigl(-\theta
m^{-\alpha}\bigr) \le\int_0^{\infty} \exp\bigl(-
\theta x^{-\alpha}\bigr)\, \mathrm{d} x = \Gamma\biggl(1-\frac
{1}{\alpha}
\biggr) \theta^{1/\alpha},
\]
and so (\ref{eqinner}) is bounded above by
\[
C'\bigl(Z_{i-1}^{\mathrm{bias}}\bigr)^{1/\alpha}
Y_i^{\mathrm{bias}} \bigl(1 - \bigl(R(i)\bigr)^{-1}
\bigr)^{-1 +1/\alpha}
\]
for some constant $C' > 0$.
Since by Lemma~\ref{lemras} we have that $R(i) \rightarrow\infty$
as $i \rightarrow-\infty$ almost surely, there exists $i_0<0$ such
that for all $i\leq i_0$, $(1 - (R(i))^{-1})^{-1+1/\alpha}$ is bounded
above, say by 2.
For $i \le i_0$, let
\[
B_i = \frac{2(R(i))^{-1}} {\mathbb{F}_{\zeta}
(Z_{i-1}^{\mathrm{bias}} (Y_i^{\mathrm{bias}})^{\alpha} )} \bigl
(Z_{i-1}^{\mathrm{bias}}
\bigr)^{1/\alpha} Y_{i}^{\mathrm{bias}}.
\]
Then by Lemma~\ref{lemLLN},
\begin{eqnarray*}
\lim_{n \to\infty} \frac{1}{n} \log(B_{-n}) & =&
\lim_{n \to\infty} \frac{\alpha}{n} \sum
_{j=-n+1}^0 \log\bigl(Y_j^{\mathrm{bias}}
\bigr) - \lim_{n \to\infty} \frac{1}{n} \log\bigl(
\mathbb{F}_{\zeta}\bigl(Z_{-n-1}^{\mathrm{bias}}
\bigl(Y_{-n}^{\mathrm
{bias}}\bigr)^{\alpha}\bigr)\bigr)
\\
&&{}+ \lim_{n \to\infty} \frac{1}{\alpha n} \log
\bigl(Z_{-n-1}^{\mathrm{bias}}\bigr)+ \lim_{n \to\infty}
\frac{1}{n} \log\bigl(Y_{-n}^{\mathrm{bias}}\bigr)
\\
& =& \alpha\mu< 0.
\end{eqnarray*}
Hence, by Cauchy's root test, (\ref{eqwantfinite}) is almost surely finite.

The statement of the lemma now follows easily: we know that
almost surely for all rational numbers $q \in\mathbb Q \cap(0,\infty
)$, only finitely many indices $i$ and $m$ contribute nonzero blocks
to the state $C_{\infty}(q)$. On this event of probability one, for
each positive time $t \notin\{R(k),k \in\mathbb Z\}$, say $t \in
(R(k+1),R(k))$, consider a rational number $q \in(t,R(k))$. Since all
indices $i,m$ that contribute to the state $C_{\infty}(t)$ also
contribute to the state $C_{\infty}(q)$, the statement follows.
\end{pf}

\begin{lem}
\label{lemCinS}
$C_{\infty}$ is almost surely a c\`adl\`ag process taking values in
$(\mathcal{S},d)$.
\end{lem}

\begin{pf}
We first prove that, with probability one, $C_{\infty}(t) \in\mathcal
{S}$ for all $t \geq0$. By Lemma~\ref{lemfinite}, with probability
one, for all $t \notin\{R(k),k \in\mathbb Z \}$, $t >0$, $\llVert
C_{\infty
}(t)\rrVert _1<\infty$. If $t=0$, $C_{\infty}(t)=\mathbf{0}$. Finally, for
$t=R(k)$ for some $k$, we can argue via monotonicity. Let $u \in
(R(k),R(k-1))$. Then $\llVert C_{\infty}(t)\rrVert _1 \leq\llVert
C_{\infty}(u)\rrVert _1
<\infty$ on the event of probability one we just considered.

We now turn to the continuity properties. We first show that $\llVert
C_{\infty}(t) \rrVert _1\to0$ as $t \downarrow0$. First, recall that
$C_{\infty,*}(t) \to0$ as $t \downarrow0$ (this was noted at the
beginning of Section~\ref{MarkovRW}, as a consequence of Lemma~\ref
{lemras}). Now fix $\varepsilon> 0$. Then we can find $t_{\varepsilon} >
0$ such that $C_{\infty,*}(t_{\varepsilon}) < \varepsilon/2$. Moreover, we
can always assume that $t_{\varepsilon}$ is not one of the $R(k)$ and,
therefore, that there are only finitely many indices $i$ and $m$ which
contribute to the state of $C_{\infty}(t_{\varepsilon})$. Since the
total mass in each of these fragmentations is decreasing to 0, it
follows that there exists some time $t_{\varepsilon}' \in(0,t_{\varepsilon
})$ such that $\llVert C_{\infty}(t'_{\varepsilon}) \rrVert _1< \varepsilon$.

Now consider a fixed time $t \in(0,\infty)$, and suppose that $t \in
[R(k+1), R(k))$ for some $k \in\Z$.
Take $(t_n)_{n \in\N}$ to be such that $t_1<R(k)$ and $t_n \downarrow
t$. Then $C_{\infty}(t_n)$ is the decreasing rearrangement of
$C_{\infty,*}(R(k+1))$ together with the blocks of $H_{i,m}(t_n)$ for
$m \ge1$, $i \le k$.
There are only finitely many indices $i$ and $m$ which contribute to
the nonzero blocks of $C_{\infty}(t_1)$, and blocks can only
disappear as $t_n$ decreases in $(R(k+1), R(k))$. Hence
\[
\sum_{i=-\infty}^k \sum
_{m=1}^{\infty} \bigl\llVert H_{i,m}(t_n)
- H_{i,m}(t_{\infty}) \bigr\rrVert_1
\]
is\vspace*{1pt} a sum with only finitely many nonzero terms. Since $\bar
{F}_{i,m}^{( Z_{i-1}^{\mathrm{bias}}(Y_i^{\mathrm{bias}})^{\alpha}
(\Delta_{i,m}^{\mathrm{bias}})^{\alpha})}(\cdot+ )$ is c\`adl\`ag
for each $i,m$, each term converges to 0, and so the whole sum
converges to~0. Using Lemma~\ref{lemdecroissance}, we deduce that
$\llVert
C_{\infty}(t_n) - C_{\infty}(t)\rrVert _1 \to0$.

The existence of a left limit at time $t \in(0,\infty)$ such that $t
\in(R(k+1), R(k))$ follows similarly, because again the same finite
collection of indices $i,m$ are involved for all $t' \in(t-\varepsilon,t)$
for sufficiently small $\varepsilon> 0$. Finally, for times $t$ such
that $t = R(k)$ for some $k \in\Z$, there is a slight difference
since the number of indices in the set $\{(k,m),m\geq1\}$ that are
involved may be infinite. However, the result still holds by Lemma~\ref
{lemdecroissance}, since
\[
\sum_{m \geq m_{\eta}} \Delta^{\mathrm{bias}}_{k,m}
\bigl(Z^{\mathrm
{bias}}_{k-1}\bigr)^{1/\alpha}\bigl(R(k-1)
\bigr)^{-1/\alpha} \leq\eta
\]
for some finite $m_{\eta}$ and all $\eta>0$.
\end{pf}

We now turn to an important tightness result, which will allow us to
ignore, in the proof of Theorem~\ref{teomainabstract}, the
possibility that there exist blocks in the system at time $R_{\varepsilon
}(k)$ which persist for a very long time before coalescing with the spine.
From now on, we use its spine decomposition, as discussed in the
previous section. For each $\varepsilon>0$ and each $k \in\mathbb Z$,
let $I_\varepsilon(k)$ be the largest positive integer $i$ such that at
least one nonspine block present at time $R_{\varepsilon}(k)$ attaches
to the spine at time $R_\varepsilon(k-i)$.
Formally, when $k \geq-N_{\varepsilon}+1$,
\[
I_\varepsilon(k)=\sup\bigl\{ 1 \le i \le k+N_{\varepsilon} - 1\dvtx
H_{k-i}^{\varepsilon,\downarrow
}\bigl(R_{\varepsilon}(k)\bigr) \neq\mathbf0
\bigr\}, %
\]
with the convention that $I_\varepsilon(k)=0$ if this is the supremum of
an empty set. We also set $I_\varepsilon(k)=0$ when $k<-N_{\varepsilon}+1$.
Our goal is prove that with a high probability $ I_\varepsilon(k)$ is not
too large, simultaneously for all $\varepsilon$ small enough.

\begin{lem}[(Tightness)]
\label{lemtight1}
Let $z>0$ be fixed and such that the convergence in distribution of
Lemma~\ref{cvpoints} holds. Consider a sequence $(\varepsilon_n)_{n \in
\mathbb N}$ of strictly positive real numbers converging to 0. Then
there exists a family of positive integers $(j_{\eta}(k))$ indexed by
$k\in\mathbb Z, \eta>0$ such that
\[
\mathbb P \bigl(I_{\varepsilon_n}(k) \geq j_{\eta}(k) \mid
Z_0=z \bigr) \leq\eta\qquad\forall n \in\mathbb N.
\]
Consequently, $\forall n \in\mathbb N$,
\begin{eqnarray*}
&& \mathbb P \bigl( \bigl\{ I_{\varepsilon_n}(0) \geq j_{\eta}(0) \bigr\}
\cup\bigl\{ \exists k \in\mathbb Z\setminus\{0\}\dvtx  I_{\varepsilon_n}(k)\geq j_{\eta/k^2}(k) \bigr\} \mid Z_0=z \bigr)
\\
&&\qquad \leq\bigl(1+2
\pi^2/6\bigr) \eta. %
\end{eqnarray*}
\end{lem}

Having in mind the construction of $C_{\infty}$, we define similarly
$I(k)$, $k \in\mathbb Z$ to be the largest integer $i \geq1$ such at
least one nonspine block present at time $R(k)$ attaches to the spine
at time $R(k-i)$ [and $I(k)=0$ if no such $i \geq1$ exists]. As a
direct consequence of Lemmas~\ref{lemfinite} and~\ref{lemtight1},
we have the following result, which is in the form we will use later
for the proof of Theorem~\ref{teomainabstract}.

\begin{lem}
\label{lemtight}
Let $z>0$ be fixed and such that the convergence of Lemma \ref
{cvpoints} holds. Consider a sequence $(\varepsilon_n)_{n \in\mathbb N}$
of strictly positive real numbers converging to 0. Then there exists a
family of positive integers $(i_{\eta}(k))$ indexed by $k\in\mathbb
Z, \eta>0$ such that
\[
\mathbb P \bigl( \exists k \in\mathbb Z\dvtx  I_{\varepsilon_n}(k) \geq
i_{\eta}(k) \mid Z_0=z \bigr) \leq\eta\qquad\forall n \in
\mathbb N
\]
and
\[
\mathbb P \bigl( \exists k \in\mathbb Z\dvtx  I(k) \geq i_{\eta}(k) \bigr)
\leq\eta. %
\]
\end{lem}

In order to prove Lemma~\ref{lemtight1}, we gather together some
technical results in the following lemma. They follow from Lemmas~\ref
{logZfiniteexpect},~\ref{mu},~\ref{lemLLN2} and~\ref{lemexptails}
in the \hyperref[appendix]{Appendix}.

\begin{lem} \label{lemsummary}
We have that for $p > 0$ and $\delta> 0$ sufficiently small,
\[
\mathbb{E} \bigl[\bigl\llvert\log\bigl(Z_0^{\mathrm{stat}}\bigr)
\bigr\rrvert^p \bigr] < \infty,\qquad\mathbb{E} \bigl[\bigl(\log
\bigl(Y_1^{\mathrm{stat}}\bigr)\bigr)^p \bigr] < \infty
\]
and
\[
\mathbb{E} \bigl[\bigl\llvert\log\bigl(\mathbb
{F}_{\zeta}\bigl(Z_0^{\mathrm{stat}} \bigl(Y_1^{\mathrm{stat}}
\bigr)^{\alpha
}\bigr)\bigr)\bigr\rrvert^{1+\delta} \bigr] < \infty.
\]
Moreover, there exist constants $A < \infty$ and $c_Y \in(0,1)$ such that
\[
\mathbb{E} \Biggl[\prod_{i=2}^n
\bigl(Y_i^{\mathrm{stat}}\bigr)^{\alpha} \Biggr] \le A
c_Y^n.
\]
\end{lem}

\begin{pf*}{Proof of Lemma~\ref{lemtight1}}
The proof is similar for all $k \in\mathbb Z$, and so, in order to
ease the notation, we will only write it out in the case where $k=0$.
In the following lines, $\eta>0$ is fixed, and $C$ denotes a finite
positive constant that may vary from line to line.

Our main goal is to prove the existence of $N_\eta\in\mathbb Z$ and
$\varepsilon_\eta>0$ such that
%
\begin{equation}
\label{eqobjectif} \mathbb P \bigl(I_{\varepsilon}(0) \geq N_{\eta} \mid
Z_0=z \bigr) \leq\eta\qquad\forall0 \leq\varepsilon\leq
\varepsilon_\eta.
\end{equation}
Since $(\varepsilon_n)_{n \in\mathbb N}$ is a sequence of strictly
positive real numbers converging to 0, this will imply the existence of
a positive integer $j_{\eta}(0)$ such that
\[
\mathbb P \bigl(I_{\varepsilon_n}(0) \geq j_{\eta}(0) \mid
Z_0=z \bigr) \leq\eta\qquad\forall n \in\mathbb N, %
\]
as expected.

Now, in order prove (\ref{eqobjectif}), note that for all integers $N
\geq1$, following the main lines of the proof of Lemma~\ref
{lemfinite}, we obtain that
\begin{eqnarray*}
&& \mathbb P \bigl(I_{\varepsilon}(0) \geq N \mid(Z,Y,\Delta) \bigr)
\\
&&\qquad \leq  C
\sum_{i=N}^{N_{\varepsilon}-1}\frac{R_{\varepsilon}(0)R_{\varepsilon
}(-i)^{-1}}{(1-R_{\varepsilon}(0)R_{\varepsilon}(-i)^{-1})^{1-1/\alpha}}
\frac{Z_{N_{\varepsilon}-i-1}^{1/\alpha} Y_{N_{\varepsilon}-i}}{\mathbb
{F}_{\zeta} (Z_{N_{\varepsilon}-i-1} Y_{N_{\varepsilon}-i}^{\alpha}
)}
\\
&&\qquad  \leq C A_{\varepsilon} B_{\varepsilon}(N),
\end{eqnarray*}
where
\begin{eqnarray*}
A_{\varepsilon}&=&\frac{R_{\varepsilon}(0)\exp(-\alpha S_{N_{\varepsilon
}}+\log(\varepsilon/Z_0))}{(1-R_{\varepsilon}(0)R_{\varepsilon
}(-1)^{-1})^{1-1/\alpha}},
\\
B_{\varepsilon}(N)&=&\sum_{i=N}^{N_{\varepsilon}-1}
\Biggl(\prod_{k=-i+1}^0 Y_{N_{\varepsilon}+k}^{\alpha}
\Biggr)\frac{Z_{N_{\varepsilon
}-i-1}^{1/\alpha} Y_{N_{\varepsilon}-i}}{\mathbb{F}_{\zeta}
(Z_{N_{\varepsilon}-i-1} Y_{N_{\varepsilon}-i}^{\alpha} )}. %
\end{eqnarray*}
Consequently, for every $A>0$,
\begin{eqnarray*}
\mathbb P \bigl(I_{\varepsilon}(0) \geq N \mid Z_0=z \bigr) &\leq&
\frac{\eta}{3} \mathbb P\bigl(A_{\varepsilon}B_{\varepsilon}(N) \leq
\eta/3C \mid Z_0=z\bigr)
\\
&&{}+ \mathbb P (A_{\varepsilon} \geq A/3C \mid Z_0=z ) +
\mathbb P \bigl(B_{\varepsilon}(N) \geq\eta/A \mid Z_0=z \bigr).
\end{eqnarray*}
But we know from Lemma~\ref{cvpoints} that when $\varepsilon\rightarrow
0$, conditional on $Z_0=z$,
\[
A_{\varepsilon} \stackrel{\mathrm{law}}\rightarrow\frac
{R(0)(Y_1^{\mathrm{bias}})^{\alpha U}}{(1-R(0)R(-1)^{-1})^{1-1/\alpha},}
\]
and the limit is almost surely finite.
Hence if we fix $A$ sufficiently large, then for all $\varepsilon$
sufficiently small, say $\varepsilon\leq\varepsilon_0$, and all $N \geq1$,
\[
\mathbb P \bigl(I_{\varepsilon}(0) \geq N \mid Z_0=z \bigr) \leq
\frac
{2\eta}{3} + \mathbb P \bigl(B_{\varepsilon}(N) \geq\eta/A \mid
Z_0=z \bigr). %
\]
Let us now deal with this last probability ($A$ is now fixed). We have
\begin{eqnarray*}
&& \mathbb P \bigl(B_{\varepsilon}(N) \geq\eta/A \mid Z_0=z \bigr)
\\
&&\qquad\leq\sum_{i=N}^{\infty} \mathbb P
\Biggl( \Biggl(\prod_{k=-i+1}^0
Y_{N_{\varepsilon}+k}^{\alpha} \Biggr)\frac{Z_{N_{\varepsilon
}-i-1}^{1/\alpha} Y_{N_{\varepsilon}-i}}{\mathbb{F}_{\zeta}
(Z_{N_{\varepsilon}-i-1} Y_{N_{\varepsilon}-i}^{\alpha} )} \mathbh
1_{\{i \leq N_{\varepsilon}-1 \}} \geq\frac{6\eta}{A \pi^2 i^2} \bigg| Z_0=z
\Biggr)
\end{eqnarray*}
(since\vspace*{1pt} $\sum_{i \geq N} i^{-2} \leq\pi^2/6$). Recall that, on the
event $\{Z_0 = z\}$, we have $\{N_{\varepsilon}=j\}=\{S_j \leq\alpha
^{-1} \log(\varepsilon/z)<S_{j+1}\}$. Hence
\begin{eqnarray*}
&&\mathbb P \Biggl( \Biggl(\prod_{k=-i+1}^0
Y_{N_{\varepsilon}+k}^{\alpha
} \Biggr)\frac{Z_{N_{\varepsilon}-i-1}^{1/\alpha} Y_{N_{\varepsilon
}-i}}{\mathbb{F}_{\zeta} (Z_{N_{\varepsilon}-i-1} Y_{N_{\varepsilon
}-i}^{\alpha} )} \mathbh1_{\{i \leq N_{\varepsilon}-1 \}}
\geq\frac{6\eta}{A \pi^2 i^2} \bigg|Z_0=z \Biggr)
\\
&&\qquad =\sum_{j=i+1}^{\infty} \mathbb P \Biggl(
\Biggl(\prod_{k=-i+1}^0 Y_{j+k}^{\alpha}
\Biggr)\frac{Z_{j-i-1}^{1/\alpha} Y_{j-i}}{\mathbb
{F}_{\zeta} (Z_{j-i-1} Y_{j-i}^{\alpha} )} \geq\frac{6\eta
}{A \pi^2 i^2},
\\
&&\hspace*{131pt} S_j \leq
\frac{\log(\varepsilon/z)}{\alpha} <S_{j+1} \bigg|  Z_0=z \Biggr)
\\
&&\qquad = \sum_{j=0}^{\infty} \mathbb P \Biggl(
\Biggl(\prod_{k=-i+1}^0 Y_{j+i+1+k}^{\alpha}
\Biggr)\frac{Z_{j}^{1/\alpha} Y_{j+1}}{\mathbb
{F}_{\zeta} (Z_{j} Y_{j+1}^{\alpha} )} \geq\frac{6\eta}{A
\pi^2 i^2},
\\
&&\hspace*{94pt} S_{j+i+1} \leq
\frac{ \log(\varepsilon/z)}{\alpha} <S_{j+i+2} \bigg| Z_0=z \Biggr)
\\
&&\qquad = \sum_{j=0}^{\infty} \mathbb E \biggl[
g_i \biggl(Z_j, \frac{\log
(\varepsilon/z)}{\alpha} -S_j
\biggr) \Big| Z_0=z \biggr],
\end{eqnarray*}
where for $x>0$, $y \in\mathbb R$ and $i \ge1$,
\[
g_i(x,y)=\mathbh1_{\{y \geq0\}}\mathbb P \Biggl( \Biggl( \prod
_{k=2}^{i+1} Y_k^{\alpha}
\Biggr) \frac{x^{1/\alpha}Y_1}{\mathbb
{F}_{\zeta}(x Y_1^{\alpha})}\geq\frac{6\eta}{A \pi^2 i^2}, S_{i+1} \leq y
<S_{i+2} \bigg|\zeta=x \Biggr). %
\]
So, finally,
\[
\mathbb P \bigl(B_{\varepsilon}(N) \geq\eta/A \mid Z_0=z \bigr)
\leq\sum_{j=0}^{\infty} \mathbb E \bigl[ g
\bigl(Z_j, \alpha^{-1} \log(\varepsilon/z)
-S_j \bigr) \mid Z_0=z \bigr], %
\]
where
$
g(x,y)=\sum_{i \geq N}g_i(x,y)$.
Assume for the moment that this function $g$ satisfies conditions (a)
and (b) of Theorem~\ref{teoAlsmeyer}. Then, as a consequence of that theorem,
\[
\limsup_{\varepsilon\rightarrow0} \mathbb P \bigl(B_{\varepsilon}(N) \geq\eta/A
\mid Z_0=z \bigr)\leq\frac{1}{\mu} \int_{\mathbb R_+}
\int_{\mathbb R_+} \sum_{i \geq N}g_i(x,y)
\pi_{\mathrm
{stat}}(\mathrm d x)<\infty. %
\]
We then use the monotone convergence theorem to conclude that there
exists some $N_{\eta}$ and then some $\varepsilon_{\eta}$ $(\leq
\varepsilon_0)$ such that
\[
\mathbb P \bigl(B_{\varepsilon}(N_{\eta}) \geq\eta/A \mid
Z_0=z \bigr) \leq\frac{\eta}{3}\qquad\forall\varepsilon\leq
\varepsilon_{\eta}, %
\]
which was the missing piece we needed to get (\ref{eqobjectif}).

It remains to check that $g$ satisfies conditions (a) and (b) of
Theorem~\ref{teoAlsmeyer}. Note that we do not even know yet that
$g(x,y)<\infty$ for Lebesgue a.e. $x,y$. We start with (b). For this,
note that if $y \in[n,n+1)$ and $y \in[S_{i+1},S_{i+2})$, then
$S_{i+2} > n$ and $S_{i+1} <n+1$. Moreover, the number of integers $n
\in(S_{i+1}-1,S_{i+2})$ is smaller than $S_{i+2}-(S_{i+1}-1)+1=\log
(Y_{i+2})+2$. Thus
\begin{eqnarray*}
&& \int_{0}^{\infty} \sum_{n \in\mathbb Z_+}
\sup_{y \in[n,n+1)} g(x,y)\pi_{\mathrm{stat}}(\mathrm dx)
\\
&&\qquad \leq\sum
_{i \geq N}\mathbb E \bigl[\bigl(\log\bigl(Y^{\mathrm{stat}}_{i+2}
\bigr)+2\bigr)
\\
&&\hspace*{59pt}{}\times \mathbh1_{ \{ (
\prod_{k=2}^{i+1} (Y^{\mathrm{stat}}_k)^{\alpha} )
{(Z_0^{\mathrm{stat}})^{1/\alpha}Y^{\mathrm{stat}}_1}/
(\mathbb
{F}_{\zeta}(Z_0^{\mathrm{stat}} (Y^{\mathrm{stat}}_1)^{\alpha
}))\geq {6\eta}/({A \pi^2 i^2}) \} } \bigr]. %
\end{eqnarray*}
Fix $\delta\in(0,1)$. By H\"older's inequality, and for any $c \in(0,1)$,
\begin{eqnarray*}
&& \mathbb E \bigl[\bigl(\log\bigl(Y^{\mathrm{stat}}_{i+2}\bigr)+2\bigr)
\mathbh1_{\{ ( \prod_{k=2}^{i+1} (Y^{\mathrm{stat}}_k)^{\alpha} )
{(Z_0^{\mathrm{stat}})^{1/\alpha}Y^{\mathrm{stat}}_1}/
(\mathbb
{F}_{\zeta}(Z_0^{\mathrm{stat}} (Y^{\mathrm{stat}}_1)^{\alpha
}))\geq {6\eta}/({A \pi^2 i^2}) \} } \bigr]
\\
&&\qquad \leq\mathbb E \bigl[\bigl(\log\bigl(Y^{\mathrm{stat}}_{1}\bigr)+2
\bigr)^{1/\delta
} \bigr]^\delta\mathbb P \Biggl( \Biggl( \prod
_{k=2}^{i+1} \bigl(Y^{\mathrm{stat}}_k
\bigr)^{\alpha} \Biggr) \frac{(Z_0^{\mathrm
{stat}})^{1/\alpha}Y^{\mathrm{stat}}_1}{\mathbb{F}_{\zeta
}(Z_0^{\mathrm{stat}} (Y^{\mathrm{stat}}_1)^{\alpha})}\geq\frac
{6\eta}{A \pi^2 i^2}
\Biggr)^{1-\delta}
\\
&&\qquad \leq C \Biggl(\mathbb P \Biggl( c^{-i} \Biggl(\prod
_{k=2}^{i+1} \bigl(Y^{\mathrm{stat}}_k
\bigr)^{\alpha} \Biggr) \geq\frac{6\eta}{A \pi^2
i^2} \Biggr)^{1-\delta}
\\
&&\hspace*{54pt}{} + \mathbb P \biggl(c^i\frac{(Z_0^{\mathrm
{stat}})^{1/\alpha}Y^{\mathrm{stat}}_1}{\mathbb{F}_{\zeta
}(Z_0^{\mathrm{stat}} (Y^{\mathrm{stat}}_1)^{\alpha})} \geq1 \biggr
)^{1-\delta}
\Biggr),
\end{eqnarray*}
where, in the last inequality, we have used the finiteness of the
expectation $\mathbb E [(\log(Y^{\mathrm{stat}}_{1}))^{1/\delta
} ]$; see Lemma~\ref{lemsummary}. By Markov's inequality, the
first probability on the right-hand side above is smaller than
$Ci^2(c_Y/c)^i$, where $c_Y$ is defined in Lemma~\ref{lemsummary}.
For the second term on the right-hand side, first take the logarithm
inside the probability, and then use Markov's inequality to bound it
from above by
\begin{eqnarray*}
&& C i^{-(1+2 \delta)} \bigl( \mathbb E \bigl[ \bigl\llvert\log
\bigl(Z_0^{\mathrm
{stat}}\bigr) \bigr\rrvert^{1+2 \delta} \bigr]+
\mathbb E \bigl[ \bigl\llvert\log\bigl(Y_1^{\mathrm
{stat}}\bigr) \bigr
\rrvert^{1+2 \delta} \bigr]
\\
&&\hspace*{56pt}\qquad {} +\mathbb E \bigl[ \bigl\llvert\log\bigl
(\mathbb
{F}_{\zeta}\bigl(Z_0^{\mathrm{stat}}\bigl(Y^{\mathrm{stat}}_1
\bigr)^{\alpha}\bigr)\bigr) \bigr\rrvert^{1+2 \delta} \bigr] \bigr).
\end{eqnarray*}
By Lemma~\ref{lemsummary}, this sum of three expectations is finite
for $\delta>0$ small enough.
Consequently, for $c \in(c_Y,1)$,
\[
\int_{0}^{\infty} \sum_{n \in\mathbb Z_+}
\sup_{y \in[n,n+1)} g(x,y)\pi_{\mathrm{stat}}(\mathrm dx) \leq\sum
_{i \geq N} C \biggl( i^2 \biggl(
\frac{c_Y}{c} \biggr)^{i(1-\delta)} + i^{-(1+2 \delta
)(1-\delta)} \biggr),
\]
and this sum on $i \geq N$ is finite as soon as $(1+2 \delta)(1-\delta
)>1$, that is, as soon as $\delta<1/2$. Hence, condition (b) of
Theorem~\ref{teoAlsmeyer} is satisfied.

To get condition (a), note that we have shown in the last paragraph
that for all $c\in(0,1)$ and all $\delta>0$ small enough,
\[
\mathbb P \Biggl( \Biggl( \prod_{k=2}^{i+1}
\bigl(Y^{\mathrm
{stat}}_k\bigr)^{\alpha} \Biggr)
\frac{(Z_0^{\mathrm{stat}})^{1/\alpha
}Y^{\mathrm{stat}}_1}{\mathbb{F}_{\zeta}(Z_0^{\mathrm{stat}}
(Y^{\mathrm{stat}}_1)^{\alpha})}\geq\frac{6\eta}{A \pi^2
i^2} \Biggr) \leq C \bigl(i^2(c_Y/c)^i+
i^{-1-2\delta} \bigr),
\]
where $C$ depends both on $c$ and $\delta$, but not on $i \geq N$.
Consequently, considering $c\in(c_Y,1)$, we get
\[
\int_0^{\infty} \sum_{i \geq N}
\mathbb P \Biggl( \Biggl( \prod_{k=2}^{i+1}
Y_k^{\alpha} \Biggr) \frac{x^{1/\alpha}Y_1}{\mathbb
{F}_{\zeta}(x Y_1^{\alpha})}\geq\frac{6\eta}{A \pi^2 i^2}
\bigg| \zeta=x \Biggr) \pi_{\mathrm{stat}}(\mathrm d x)<\infty, %
\]
hence for Lebesgue a.e. $x>0$, $\sum_{i \geq N}\mathbb P (
( \prod_{k=2}^{i+1} Y_k^{\alpha} ) \frac{x^{1/\alpha
}Y_1}{\mathbb{F}_{\zeta}(x Y_1^{\alpha})}\geq\frac{6\eta}{A \pi
^2 i^2} \mid\zeta=x )$ is finite. For those $x$, $g(x,y)<\infty$
for all $y\geq0$ and we can apply the dominated convergence theorem to
deduce that $g(x, \cdot)$ is continuous at each point which is not an
atom of one of the $S_i, i \geq1$. The Lebesgue measure of this set of
atoms is~0; hence condition (a) is also satisfied.
\end{pf*}

\subsection{Proof of Theorem \texorpdfstring{\protect\ref{teomainabstract}}{1.1}}
Consider a sequence $(\varepsilon_n)$ of strictly positive real numbers
converging to 0, and recall from Corollary~\ref{corspine} the spine
construction of
\[
\bigl(\varepsilon_n^{1/\alpha}F\bigl((\zeta-\varepsilon_n
t)-\bigr),0\leq t \leq\zeta/\varepsilon_n\bigr)
\]
in terms of the Markov chain $(Z_k,Y_k,\bolds\Delta_k)_{k \geq0}$
and the time-reversed fragmentations
\[
\bar F_{i,m}^{(Z_{N_{\varepsilon_n}+i-1} Y_{N_{\varepsilon_n}+i}^{\alpha}
\Delta^{\alpha}_{N_{\varepsilon_n}+i,m} )}, \qquad i \in\mathbb Z, m \geq1,
\]
where these fragmentations are conditionally independent given
$(Z_k,Y_k,\break \bolds\Delta_k)_{k \geq0}$.
For the rest of this proof, we fix $z>0$ such that the conditional
convergence of Lemma~\ref{cvpoints} holds.

\begin{longlist}[\textit{Step} 2.]
\item[\textit{Step} 1.] As we have already mentioned, an important
technical issue is the possibility that, among the blocks present at
time $t$, there are some which will persist in the system for a very
long time before coalescing with spine. In other words, we would like
to be able to say that
$H_i^{\varepsilon_n,\downarrow}$
does not contribute to the state for large negative $i$ (uniformly in $n$).
For this reason, we introduce, for all $\eta>0$ and $n \in\mathbb N$,
the modified process
\[
\bigl(\varepsilon_n^{1/\alpha}F^{(\eta)}\bigl((\zeta-
\varepsilon_n t)-\bigr),0\leq t \leq\zeta/\varepsilon_n\bigr)
\]
whose spine decomposition is constructed from $(Z_k,Y_k,\bolds\Delta
_k)_{k \geq0}$ in a way very similar to $(\varepsilon
_n^{1/\alpha}F((\zeta-\varepsilon_n \cdot)-))$ except that some terms
are omitted: for $t \in[R_{\varepsilon_n}(k+1), R_{\varepsilon
_n}(k) )$, $k \geq-N_{\varepsilon_n}$, we take $\varepsilon
_n^{1/\alpha} F^{(\eta)}((\zeta- \varepsilon_n t)-)$ to be the
decreasing rearrangement of the terms involved in:
\begin{itemize}
\item  $Z_{N_{\varepsilon_n} + k}^{1/\alpha} (R_{\varepsilon
_n}(k))^{-1/\alpha}$,

\item  $H^{\varepsilon_n,\downarrow}_{i}(t)$,
$k-i_{\eta}(k) \le i \le k$,
\end{itemize}
where the (deterministic) integers $i_{\eta}(k)$ are those introduced
in Lemma~\ref{lemtight}. If $t>\zeta/\varepsilon_n$, we set $F^{(\eta
)}((\zeta-\varepsilon_n t)-)=\mathbf1$. By Lemma~\ref{lemtight}, the
processes $F^{(\eta)}((\zeta-\varepsilon_n \cdot)-)$ and $F((\zeta
-\varepsilon_n \cdot)-)$ are identical with a high probability
independently of $n$, namely
\[
\mathbb P \bigl(\bigl(\varepsilon_n^{1/\alpha}F^{(\eta)}
\bigl((\zeta-\varepsilon_n t)-\bigr),t \geq0\bigr)\neq\bigl(
\varepsilon_n^{1/\alpha}F\bigl((\zeta-\varepsilon_n t)-
\bigr), t \geq0\bigr) \mid\zeta=z \bigr) \leq\eta. %
\]
Consequently, for every bounded continuous test function $f\dvtx \mathcal S
\rightarrow\mathbb R$,
\[
\bigl\llvert\mathbb E \bigl[f \bigl(\varepsilon_n^{1/\alpha}F
\bigl((\zeta-\varepsilon_n \cdot)-\bigr) \bigr) \mid\zeta=z \bigr]-
\mathbb E \bigl[f \bigl(\varepsilon_n^{1/\alpha}F^{(\eta)}
\bigl((\zeta-\varepsilon_n \cdot)-\bigr) \bigr) \mid\zeta=z \bigr]\bigr
\rrvert\leq C\eta,
\]
where $C$ is independent of $n$ and $\eta$. Similarly, again by Lemma
\ref{lemtight}, $ \llvert \mathbb E [f (C_\infty)
]- \mathbb E[f( C_\infty^{(\eta)})]\rrvert \leq C\eta$, where
$C_{\infty
}^{(\eta)}(t)$ is defined for $t \in[R(k+1), R(k) )$ to
be the decreasing rearrangement of the terms involved in:
\begin{itemize}
\item  $(Z_k^{\mathrm{bias}}) ^{1/\alpha}
(R(k))^{-1/\alpha}$,\vspace*{1pt}

\item  $H^{\downarrow}_{i}(t)$, $k - i_{\eta}(k) \le i
\le k$.
\end{itemize}
Therefore, the expected convergence in distribution will be proved if
we show that the process $(\varepsilon_n^{1/\alpha}F^{(\eta)}((\zeta
-\varepsilon_n \cdot)-))$ converges in distribution (conditional on
$\zeta=z$) to $C_{\infty}^{(\eta)}$, for each $\eta>0$.

\item[\textit{Step} 2.] Fix $\eta>0$. Our goal is to prove that
conditionally on $\zeta=z$, there exist versions of $(\varepsilon
_n^{1/\alpha}F^{(\eta)}((\zeta-\varepsilon_n t)-),0\leq t \leq\zeta
/\varepsilon_n)$, $n \in\mathbb N$, that converge to a version of
$C^{(\eta)}_{\infty}$, almost surely as $\varepsilon_n \rightarrow0$.
With step~1 above, this will clearly entail Theorem~\ref{teomainabstract}.

By Lemma~\ref{cvpoints} and the Skorokhod representation theorem,
conditionally on \mbox{$\zeta=z$}, there exist versions of
%
\begin{equation}
\label{eqcvps} \biggl( (Z_{N_{\varepsilon_n}+k},Y_{N_{\varepsilon_n}+k},{\bolds{
\Delta
}}_{N_{\varepsilon_n}+k} )_{k \in\mathbb Z},\frac{1}{\alpha}\log(
\varepsilon_n/\zeta)-S_{N_{\varepsilon_n}} \biggr)
\end{equation}
that converge almost surely as $\varepsilon_n \rightarrow0$ to a version
of $ ( (Z^{\mathrm{bias}},Y^{\mathrm{bias}},{\bolds{\Delta
}}^{\mathrm{bias}} ),\break U\log(Y^{\mathrm{bias}}_1) )$.
From now on, we always consider these versions. Using Lemma \ref
{cvSkobarF}, we get the joint Skorokhod convergence in distribution,
conditional on $\zeta=z$, of the c\`adl\`ag processes
\[
H^{\varepsilon_n}_{i,m} \rightarrow H_{i,m}\qquad\mbox{as }
\varepsilon_n \rightarrow0,  i \in\Z,m \geq1.
\]
By the Skorokhod representation theorem, we may again assume that these
convergences hold almost surely. Without changing notation, we work
with these versions for the rest of this proof. In fact, we will
implicitly work on the event of probability one where the convergence
of (\ref{eqcvps}) to $ ( (Z^{\mathrm{bias}},Y^{\mathrm
{bias}},{\bolds{\Delta}}^{\mathrm{bias}} ),\break U\log(Y^{\mathrm
{bias}}_1) )$ holds, as well as all convergences of processes
$H^{\varepsilon_n}_{i,m}$ to $H_{i,m}$, $i \in\mathbb Z, m\geq1$.

\item[\textit{Step} 2(a).] We then claim that for each $i \in
\mathbb Z$,
\[
H^{\varepsilon_n,\downarrow}_{i} \rightarrow H^{\downarrow}_{i}
\qquad\mbox{as }\varepsilon_n \rightarrow0, %
\]
in the Skorokhod sense (for the distance $d$ on $\mathcal S$). To see
this, we use Proposition~\ref{PropEK} and Lemma~\ref{cvSkorearr} from
the \hyperref[appendix]{Appendix}. For this, fix a time $t \geq0$ and a sequence
$(t_{\varepsilon_n})$ converging to $t$. The integer $i$ being fixed, our
goal is to check that the functions $H^{\varepsilon_n,\downarrow}_{i}$
and $H^{\downarrow}_{i}$ satisfy assertions (a), (b) and (c) of
Proposition~\ref{PropEK} for the sequence of times $(t_{\varepsilon
_n})$. In order to do this, we distinguish three cases: $t \in
[0,\infty)\setminus\{R(i),0\}$, $t=0$ and $t=R(i)$.

First assume that $t \neq R(i)$ and $t>0$. Since a reversed
fragmentation process $\bar F^{(x)}$ almost surely does not jump at any
given fixed time except $x$, the processes $H_{i,m},m\geq1$ cannot
jump simultaneously on $\mathbb R_{+}\setminus\{R(i)\}$. So at most
one process among $H_{i,m}, m \geq1$ jumps at time $t$ (almost
surely). Let $m_t$ be the index of this process if it exists. For $m
\neq m_t$, $H^{\varepsilon_n}_{i,m}(t_{\varepsilon_n}) \rightarrow
H_{i,m}(t)$ and this leads to the convergence in $\mathcal S$ of the
decreasing rearrangement of all terms involved in at least one sequence
$H^{\varepsilon_n}_{i,m}(t_{\varepsilon_n})$ for some $m \neq m_t$, to the
decreasing rearrangement of all terms involved in at least one sequence
$H_{i,m}(t)$ for some $m \neq m_t$, although the number of $m$ involved
may be infinite. Indeed, this is due to the continuity property for
finite decreasing rearrangements (Lemma~\ref{lemcontinuity}) and to
the fact that
\begin{eqnarray*}
\sum_{m \geq M} \bigl\llVert H^{\varepsilon_n}_{i,m}(t_{\varepsilon_n})
\bigr\rrVert_1 &\leq& Z_{N_{\varepsilon_n} + i-1}^{1/\alpha}
\bigl(R_{\varepsilon_n}(i-1)\bigr)^{-1/\alpha
} \sum
_{m \geq M} \Delta_{N_{\varepsilon_n}+ i,m}
\\
&\displaystyle\mathop{\rightarrow}_{\varepsilon_n \rightarrow0}&
\bigl(Z^{\mathrm{bias}}_{i-1}\bigr)^{1/\alpha} \bigl(R(i-1)
\bigr)^{-1/\alpha}\sum_{m
\geq M}\Delta^{\mathrm{bias}}_{i,m},
\end{eqnarray*}
which implies that for all $\delta>0$ there exists $M_{\delta} \in
\mathbb N$ such that for all $\varepsilon_n$ small enough,
%
\begin{equation}
\label{restespetits} \sum_{m \geq M_{\delta}} \bigl\llVert
H^{\varepsilon_n}_{i,m}(t_{\varepsilon_n})\bigr\rrVert_1
\leq\delta\quad\mbox{and}\quad\sum_{m \geq M_{\delta}} \bigl\llVert
H_{i,m}(t)\bigr\rrVert_1 \leq\delta.
\end{equation}
Hence, $\sum_{m\geq1,m\neq m_t}d(H^{\varepsilon_n}_{i,m}(t_{\varepsilon
_n}),H_{i,m}(t)) \to0$, and so, by Lemma~\ref{lemdecroissance}, the
decreasing rearrangement $\{H^{\varepsilon_n}_{i,m}(t_{\varepsilon_n}),m
\neq m_t\}^{\downarrow}$ converges in $\mathcal S$ to $\{H_{i,m}(t),\break m
\neq m_t\}^{\downarrow}$. Now, we also have that $H^{\varepsilon
_n}_{i,m_t}$ converges in the Skorokhod sense to $H_{i,m_t}$. It
follows, using Lemma~\ref{cvSkorearr}(i), that $H^{\varepsilon
_n,\downarrow}_{i}$ and $H^{\downarrow}_i$ satisfy assertions (a),
(b) and (c) of Proposition~\ref{PropEK} for the sequence of times
$(t_{\varepsilon_n})$.

Next assume that $t=0$. Let $(s_k)_{k \in\mathbb N}$ be a decreasing
sequence of strictly positive times that are not jump times of
$H^{\downarrow}_i$, and that converge to 0. Then, since $s_k \neq
R(i)$ and $s_k>0$, as we have just seen,
\[
\bigl\llVert H^{\varepsilon_n,\downarrow}_{i}(s_k) \bigr\rrVert
_1\mathop\rightarrow_{n \rightarrow\infty}\bigl\llVert
H^{\downarrow}_{i}(s_k) \bigr\rrVert_1
\qquad\forall k \in\mathbb N.
\]
We conclude by using a monotonicity argument: for all $k$ and then all
$\varepsilon_n$ sufficiently small, we have $t_{\varepsilon_n}\leq s_k$,
and so
\[
\bigl\llVert H^{\varepsilon_n,\downarrow}_{i}(t_{\varepsilon_n}) \bigr\rrVert
_1 \leq\bigl\llVert H^{\varepsilon_n,\downarrow}_{i}(s_k)
\bigr\rrVert_1,
\]
and then
\[
\limsup_{\varepsilon_n \rightarrow0} \bigl\llVert H^{\varepsilon_n,\downarrow
}_{i}(t_{\varepsilon_n})
\bigr\rrVert_1 \leq\bigl\llVert H^{\downarrow}_{i}(s_k)
\bigr\rrVert_1 \leq\bigl\llVert C_{\infty}(s_k)
\bigr\rrVert_1\qquad\forall k \in\mathbb N.
\]
Now let $k \rightarrow\infty$, so that $\llVert C_{\infty}(s_k)
\rrVert _1
\rightarrow0$, by the right-continuity of $C_{\infty}$ at 0. Hence,
$H^{\varepsilon_n,\downarrow}_{i}(t_{\varepsilon_n}) \rightarrow\mathbf
0=H^{\downarrow}_i(0)$ as $\varepsilon_n \rightarrow0$.

Finally, for $t=R(i)$, consider the subsequences $(t_{\varepsilon_{\phi
(n)}})$ and $(t_{\varepsilon_{\psi(n)}})$ of $(t_{\varepsilon_n})$
characterized by
\begin{eqnarray*}
R_{\varepsilon_{\phi(n)}}(i) &\leq& t_{\varepsilon_{\phi(n)}}<R_{\varepsilon
_{\phi(n)}} (i-1),
\\
R_{\varepsilon_{\psi(n)}}(i+1) &\leq& t_{\varepsilon_{\psi(n)}} < R_{\varepsilon
_{\psi(n)}} (i).
\end{eqnarray*}
For\vspace*{1pt} $N$ large enough, there always exists a $n$ such that either
$N=\phi(n)$ or $N=\psi(n)$. Since $H^{\varepsilon_n,\downarrow
}_{i}(s)=\mathbf0$ for all $s \geq R_{\varepsilon_n}(i)$, we clearly
have that
\[
H^{\varepsilon_n,\downarrow}_{i} (t_{\varepsilon_{\phi(n)}}) \rightarrow
\mathbf0=
H^{\downarrow}_{i} (t). %
\]
Next,\vspace*{1pt} note that $H^{\varepsilon_n}_{i,m}(t_{\varepsilon_{\psi(n)}})
\rightarrow H_{i,m}(t-)$ for all $m \geq1$. Moreover, similar to~(\ref{restespetits}), for all $\delta> 0$, there exists an integer
$M_{\delta}$ such that for all $\varepsilon_n$ small enough,
\[
\sum_{m \geq M_{\delta}} \bigl\llVert H^{\varepsilon_n}_{i,m}(t_{\varepsilon
_{\psi
(n)}})
\bigr\rrVert\leq\delta\quad\mbox{and}\quad\sum_{m \geq M_{\delta}}
\bigl\llVert H_{i,m}(t-)\bigr\rrVert\leq\delta. %
\]
From this and Lemma~\ref{lemdecroissance} we deduce that
\[
H^{\varepsilon_n,\downarrow}_{i} (t_{\varepsilon_{\psi(n)}}) \rightarrow
H^{\downarrow}_{i} (t-). %
\]
Assertion (a) of Proposition~\ref{PropEK} follows. To get assertion
(b), note that if
\[
H^{\varepsilon_n,\downarrow}_{i} (t_{\varepsilon_{n}}) \rightarrow\mathbf
0=H^{\downarrow}_{i} (t),
\]
then necessarily $R_{\varepsilon_{n}}(i)\leq t_{\varepsilon
_{n}}<R_{\varepsilon_{n}} (i-1)$ for $n$ large enough [since
$H^{\downarrow}_{i} (t-) \neq \mathbf{0}$]. Hence if $(s_{\varepsilon_{n}})$ is a
sequence converging to $t$ with $s_{\varepsilon_{n}} \geq t_{\varepsilon
_n}$, one has $R_{\varepsilon_{n}}(i) \leq s_{\varepsilon_{n}}<R_{\varepsilon
_{n}} (i-1)$ for $n$ large enough and then $
H^{\varepsilon_n,\downarrow}_{i} (s_{\varepsilon_{n}})=\mathbf0=
H^{\downarrow}_{i} (t)$. We obtain assertion (c) similarly.

\item[\textit{Step} 2(b).]
Conditionally on $\zeta=z$, we consider for all $n$ the version of
%
\begin{equation}
\label{eqFeta} \bigl(\varepsilon_n^{1/\alpha}F^{(\eta)}
\bigl((\zeta-\varepsilon_n t)-\bigr),0\leq t \leq\zeta/
\varepsilon_n \bigr)
\end{equation}
built from the chain $ (Z_{N_{\varepsilon_n}+k},Y_{N_{\varepsilon
_n}+k},{\bolds{ \Delta}}_{N_{\varepsilon_n}+k} )_{k \in\mathbb Z}$,
the real number\break $\frac{1}{\alpha}\log(\varepsilon_n/\zeta
)-S_{N_{\varepsilon_n}}$ and the processes $H^{\varepsilon_n,\downarrow
}_{i}, i \in\mathbb Z$. We know that (almost surely) these quantities
converge, respectively, to
$ (Z^{\mathrm{bias}},Y^{\mathrm{bias}},{\bolds{\Delta}}^{\mathrm
{bias}} )$,\break $U\log(Y^{\mathrm{bias}}_1)$ and \mbox{$H^{\downarrow
}_{i}, i \in\mathbb Z$}. To prove that this version of (\ref{eqFeta})
converges for the Skorokhod topology as $\varepsilon_n \rightarrow0$ to
a version of $C^{(\eta)}_{\infty}$ [indeed, the version constructed
from $ (Z^{\mathrm{bias}},Y^{\mathrm{bias}},{\bolds{\Delta
}}^{\mathrm{bias}} ), U\log(Y^{\mathrm{bias}}_1)$ and
$H^{\downarrow}_{i}, i \in\mathbb Z$], we will again use Proposition
\ref{PropEK} and Lemma~\ref{cvSkorearr}.

We start by proving the Skorokhod convergence on any compact set $[a,b]
\subseteq(0,\infty)$. Let $R(k_a)$ be the largest $R(k)$ strictly
smaller than $a$ and similarly $R(k_b)$ be the smallest $R(k)$ strictly
larger than $b$. For all $\varepsilon_n$ small enough, $R_{\varepsilon
_n}(k_a)<a$ and $R_{\varepsilon_n}(k_b)>b$. This implies that the
processes\break $(\varepsilon_n^{1/\alpha}F^{(\eta)}((\zeta-{\varepsilon_n} t
)-), t \in[a,b])$ and $(C^{(\eta)}_{\infty}(t),t \in[a,b])$ are
constructed from the sequences $H^{\varepsilon_n,\downarrow}_{i}$ and
$H^{\downarrow}_i$, respectively, with $k_b-i_{\eta}(k_b) \le i \le
k_a-1$ [together with the terms $Z_{N_{\varepsilon_n} + k}^{1/\alpha}
(R_{\varepsilon_n}(k))^{-1/\alpha}$, $(Z_k^{\mathrm{bias}}) ^{1/\alpha
} (R(k))^{-1/\alpha}$, for $k_b \le k \le k_a-1$]. Crucially, the
number of processes $H^{\varepsilon_n,\downarrow}_{i}, H^{\downarrow
}_{i}$ involved in these constructions is finite, independently of $n$.
Moreover, the processes $H^{\downarrow}_{i}, i \in\mathbb Z$ do not
jump simultaneously (almost surely). We can therefore apply Lemma~\ref
{cvSkorearr}(ii) to obtain the Skorokhod convergence of $\varepsilon
_n^{1/\alpha}F^{(\eta)}((\zeta-{\varepsilon_n} \cdot)-)$ to $C^{(\eta
)}_{\infty}$ on any compact set $[a,b] \subseteq(0,\infty)$.

It remains to check that for any sequence $(t_{\varepsilon_n})$
converging to 0, $\varepsilon_n^{1/\alpha}F^{(\eta)}((\zeta-{\varepsilon
_n} t_{\varepsilon_n} )-)$ converges to $\mathbf0=C^{(\eta)}_{\infty
}(0)$. This can be done via a monotonicity argument, exactly as in the
case $t=0$ of step~2(a).
\end{longlist}

\section{An invariant measure for the fragmentation process}\label{secinvariant}

This section is devoted to the proof of Theorem~\ref{teoinvariant}.
Throughout, we will assume that the assumptions of Theorem~\ref
{teomainabstract} are satisfied.
Recall that the occupation measure $\lambda$ on $(\mathcal{S},
\mathcal{B}(\mathcal{S}))$ is defined by
\[
\lambda(A) = \int_0^{\infty} \mathbb{P}
\bigl(C_{\infty}(t) \in A \bigr)\, \mathrm{d} t\quad\mbox{for all } A
\in
\mathcal{B}(\mathcal{S}).
\]
By definition of the process $C_{\infty}$, it is clear that $\lambda
(\{\mathbf0\})=0$ and also, using its self-similarity, that
\[
\lambda\bigl(\{\mathbf s \in\mathcal S\dvtx  s_i \leq a_i
x, \forall i \geq1\} \bigr)=x^{-\alpha}\lambda\bigl( \{\mathbf s \in
\mathcal S\dvtx  s_i \leq a_i, \forall i \geq1\} \bigr)
\]
for all $a_i \geq0$ and all $x>0$.

Recall the notation $\llVert \mathbf s\rrVert _1=\sum_{i \geq1} s_i$
for $\mathbf
s \in\mathcal S$.
Our goal in this section is to prove first that
\[
\lambda\bigl(\bigl\{ \mathbf s \in\mathcal S\dvtx  \llVert\mathbf s\rrVert
_1 \leq x\bigr\}\bigr)<\infty\qquad\forall x>0
\]
(which implies that $\lambda$ is $\sigma$-finite)
and second that
\[
\int_{\mathcal S} f(\mathbf s)\lambda(\mathrm d \mathbf s) = \int
_{\mathcal{S}} \mathbb E_{\mathbf s} \bigl[f\bigl(F(u)\bigr)
\bigr] \lambda(\mathrm{d} \mathbf{s})
\]
for all $u> 0$ and all continuous functions $f\dvtx \mathcal S \rightarrow
\mathbb R_+$ such that
$f(\mathbf s) \leq\mathbh1_{\{0< \llVert \mathbf s\rrVert _1 \leq c\}
}$ for some $c>0$.

\begin{lem}
\label{lemcveps}
For all continuous functions $f\dvtx \mathcal S \rightarrow\mathbb R_+$
such that
$f(\mathbf s) \leq\mathbh1_{\{\llVert \mathbf s\rrVert _1 \leq c\}}$
for some $c>0$,
\[
\int_0^{\infty} \mathbb E \bigl[ f \bigl(
\varepsilon^{1/\alpha} F(\zeta-\varepsilon t) \bigr) \bigr]\,\mathrm dt
\mathop\rightarrow_{\varepsilon\rightarrow0}\int_{\mathcal S}f(\mathbf s)\lambda
(\mathrm d \mathbf s) \in[0,\infty). %
\]
\end{lem}

\begin{pf} To simplify the notation, we assume that $c=1$; a similar
argument works for a general $c>0$.
By Theorem~\ref{teomainabstract}, for all $t>0$, $\mathbb E [ f
( \varepsilon^{1/\alpha} F(\zeta-\varepsilon t) ) ]
\rightarrow\mathbb E [ f (C_{\infty}(t) ) ]$. It
remains to check that we can apply the dominated convergence theorem.
For this, we introduce for every $a>0$ the stopping time $\tau
_{a}=\inf{\{u \geq0\dvtx  \llVert F(u) \rrVert _1 \leq a\}}$. By
Proposition~\ref
{strongMarkov}, we may write
\[
\zeta-\tau_a=\sup_{i \geq1} \bigl\{F_i(
\tau_a)^{-\alpha}\zeta^{(i)} \bigr\}, %
\]
where the $\zeta^{(i)}$s are i.i.d. distributed as $\zeta$ and
independent of $F(\tau_a)$. Hence, for all $\beta\geq1$,
\begin{eqnarray*}
\mathbb E \bigl[ f \bigl( \varepsilon^{1/\alpha} F(\zeta-\varepsilon t) \bigr)
\bigr] &\leq& \mathbb P (\zeta-\varepsilon t \geq\tau_{\varepsilon^{-1/\alpha}})
\\
&\leq&\mathbb P
\bigl((\zeta-\tau_{\varepsilon^{-1/\alpha}})^{-\beta/\alpha} \geq(\varepsilon
t)^{-\beta
/\alpha} \bigr)
\\
&\leq&\mathbb P \biggl( \sum_{i \geq1} F_i(
\tau_{\varepsilon^{-1/\alpha
}})^{\beta}\bigl(\zeta^{(i)}
\bigr)^{-\beta/\alpha} \geq(\varepsilon t)^{-\beta
/\alpha} \biggr)
\\
&\leq&\frac{\mathbb E [\zeta^{-\beta/\alpha} ] \mathbb
E [ \sum_{i \geq1} F_i(\tau_{\varepsilon^{-1/\alpha}})^{\beta
} ]}{(\varepsilon t)^{-\beta/\alpha}}
\\
&\leq&\frac{\mathbb E [\zeta^{-\beta/\alpha}
]}{t^{-\beta/\alpha}},
\end{eqnarray*}
by definition of $\tau_{\varepsilon^{-1/\alpha}}$ and the fact that
$\beta\geq1$. Taking $\beta$ larger if necessary so that $-\beta
/\alpha>1$ and recalling that $\mathbb E [\zeta^{-\beta/\alpha
} ]<\infty$, we obtain
\[
\mathbb E \bigl[ f \bigl( \varepsilon^{1/\alpha} F(\zeta-\varepsilon t) \bigr)
\bigr] \leq\min\bigl(1,Ct^{\beta/\alpha}\bigr)\qquad\forall t \geq0
\]
for some finite constant $C$, independently of $\varepsilon$. The result follows.
\end{pf}

\begin{pf*}{Proof of Theorem~\ref{teoinvariant}}
Consider the potential measure
\[
\lambda_{\varepsilon}(A):= \int_0^{\infty}
\mathbb P_{ \varepsilon
^{1/\alpha} \mathbf{1}} \bigl(F(t) \in A \bigr)\, \mathrm{d} t.
\]
Equivalently, $\lambda_{\varepsilon}(A)$ is the expected time spent in
$A$ by a fragmentation process started from $\varepsilon^{1/\alpha}
\mathbf{1}$.
Suppose now that $f\dvtx \mathcal S \rightarrow\mathbb R_+$ is continuous
and such that
$f(\mathbf s) \leq\mathbh1_{\{0<\llVert \mathbf s\rrVert _1 \leq c\}
}$ for some
$c>0$. By the self-similarity of the fragmentation process,
%
\begin{eqnarray}
\label{cvborne} && \int_{\mathcal S}f(\mathbf s )\lambda_{\varepsilon}(
\mathrm d \mathbf s)  =  \int_0^{\infty} \mathbb E
\bigl[ f \bigl(\varepsilon^{1/\alpha
} F(\varepsilon t) \bigr) \bigr]\, \mathrm{d} t
\nonumber\\[-8pt]\\[-8pt]\nonumber
&&\hspace*{49pt} \displaystyle\mathop{=}_{(\mathrm{if}~\varepsilon^{1/\alpha}>c)} \int
_0^{\infty} \mathbb E \bigl[f \bigl(
\varepsilon^{1/\alpha} F(\zeta- \varepsilon t) \bigr) \bigr]\, \mathrm{d} t
\to\int
_{\mathcal S}f(\mathbf s)\lambda(\mathrm d \mathbf s)
\end{eqnarray}
as $\varepsilon\to0$, by Lemma~\ref{lemcveps}.

From now on, fix $u>0$, $c>0$ and a continuous function $f\dvtx \mathcal S
\rightarrow\mathbb R_+$ such that
$f(\mathbf s) \leq\mathbh1_{\{0<\llVert \mathbf s\rrVert _1 \leq c\}
}$. Our goal
is to check, on the one hand, that
%
\begin{equation}
\label{cvpotential1} \int_{\mathcal S} \mathbb E_{\mathbf{s} } \bigl[ f
\bigl(F(u)\bigr) \bigr] \lambda_{\varepsilon}(\mathrm d \mathbf s)
\rightarrow
\int_{\mathcal
S}f(\mathbf s)\lambda(\mathrm d \mathbf s)
\end{equation}
and, on the other, that
%
\begin{equation}
\label{cvpotential2} \int_{\mathcal S} \mathbb E_{\mathbf{s}} \bigl[ f
\bigl(F(u)\bigr) \bigr] \lambda_{\varepsilon}(\mathrm d \mathbf s)
\rightarrow
\int_{\mathcal
S} \mathbb E_{\mathbf s} \bigl[ f\bigl(F(u)\bigr)
\bigr] \lambda(\mathrm d \mathbf s).
\end{equation}
Together, these will yield the invariance of $\lambda$.

We start with (\ref{cvpotential1}). By the definition of $\lambda
_{\varepsilon}$,
\begin{eqnarray*}
&& \int_{\mathcal S} \mathbb E_{\mathbf{s} } \bigl[ f\bigl(F(u)
\bigr) \bigr] \lambda_{\varepsilon}(\mathrm d \mathbf s)
\\
&&\qquad  = \int
_0^{\infty} \mathbb E \bigl[ f\bigl(
\varepsilon^{1/\alpha
}F\bigl(\varepsilon(u+t)\bigr)\bigr) \bigr] \,\mathrm d t
\\
&&\qquad = \int_0^{\infty} \mathbb E \bigl[ f\bigl(
\varepsilon^{1/\alpha
}F(\varepsilon t)\bigr) \bigr] \,\mathrm d t-\int
_0^u \mathbb E \bigl[ f\bigl(
\varepsilon^{1/\alpha}F(\varepsilon t)\bigr) \bigr] \,\mathrm d t.
\end{eqnarray*}
The first integral in the last line converges to $\int_{\mathcal
S}f(\mathbf s)\lambda(\mathrm d \mathbf s)$, by (\ref{cvborne}). The
second converges to 0, since $\mathbb E [ f(\varepsilon^{1/\alpha
}F(\varepsilon t)) ] \rightarrow0$ for all $t>0$ [as $\varepsilon
^{1/\alpha}\llVert F(\varepsilon t)\rrVert _1>c$ for $\varepsilon$ small
enough, a.s.].
The convergence in (\ref{cvpotential1}) follows.

To get (\ref{cvpotential2}), set $g(\mathbf s)=\mathbb E_{ \mathbf{s}
} [ f(F(u)) ] $. The function $g$ is continuous, bounded
and $\mathbb R_+$-valued, but is not supported by a set of the form
$0<\llVert \mathbf s\rrVert _1\leq c'$ for some~$c'$, so we cannot
conclude the
desired result directly from the convergence of $\lambda_{\varepsilon}$
to $\lambda$. Note that, for all $c'>0$,
\begin{eqnarray*}
&& \int_{\mathcal S} g(\mathbf s) \mathbh{1}_{ \{ \llVert \mathbf s\rrVert
_1>c' \} }
\lambda_{\varepsilon} (\mathrm d \mathbf s)
\\
&&\qquad = \int_0^{\infty}
\mathbb{E} \bigl[f \bigl( \varepsilon^{1/\alpha} F\bigl(\varepsilon(u+t)\bigr)
\bigr) \mathbh{1}_{ \{ \llVert \varepsilon^{1/\alpha
}F(\varepsilon t)\rrVert _1 > c' \} } \bigr] \,\mathrm d t
\\
&&\qquad \leq \int_0^{\infty} \mathbb P \bigl(\bigl\llVert
\varepsilon^{1/\alpha
}F(\varepsilon t)\bigr\rrVert_1 >c',
0<\bigl\llVert\varepsilon^{1/\alpha}F\bigl(\varepsilon(t+u)\bigr)\bigr\rrVert
_1 \leq c \bigr) \,\mathrm dt
\\
&&\qquad \leq\int_0^{\infty} \mathbb P \bigl( \bigl\llVert
\varepsilon^{1/\alpha}F(\zeta-\varepsilon t-\varepsilon u) \bigr\rrVert
_1>c',0< \bigl\llVert\varepsilon^{1/\alpha}F(\zeta-
\varepsilon t) \bigr\rrVert_1\leq c \bigr) \,\mathrm dt.
\end{eqnarray*}

Using the dominated convergence theorem (and the same argument as in
the proof of Lemma~\ref{lemcveps}), we see that the right-hand side
converges to $\int_0^{\infty} \mathbb P (\llVert C_{\infty}
(t+u)\rrVert
_1>c', \llVert C_{\infty}(t) \rrVert _1\leq c ) \,\mathrm dt$, which
is finite.
Hence, for all $\eta>0$ and then all $c'>0$ large enough, say $c' \geq
c'_{\eta}$,
\[
\limsup_{\varepsilon\rightarrow0} \int_{\mathcal S} g(\mathbf{s} )
\mathbh{1}_{ \{ \llVert \mathbf s\rrVert _1>c' \} } \lambda_{\varepsilon
} (\mathrm d \mathbf s)\leq\eta.
\]
Now,
\begin{eqnarray*}
&& \int_{\mathcal S} g(\mathbf s) \mathbh{1}_{ \{ \llVert \mathbf s\rrVert
_1>c' \} }\lambda_{\varepsilon} (\mathrm d \mathbf s)
\\
&&\qquad =\int_0^{\infty}
\mathbb E \bigl[ g \bigl(\varepsilon^{1/\alpha
}F(\varepsilon t) \bigr)
\mathbh{1}_{ \{ \llVert \varepsilon^{1/\alpha
}F(\varepsilon t)\rrVert _1 > c' \} } \bigr] \,\mathrm dt
\\
&&\qquad = \int_0^{\infty} \mathbb E \bigl[ g \bigl(
\varepsilon^{1/\alpha
}F(\zeta-\varepsilon t) \bigr) \mathbh{1}_{ \{ \llVert \varepsilon
^{1/\alpha}F(\zeta-\varepsilon t)\rrVert _1 > c' \} }
\mathbh{1}_{
\{ \zeta\ge\varepsilon t \} } \bigr] \,\mathrm dt.
\end{eqnarray*}
Since the function $\mathbf{s} \mapsto g(\mathbf{s}) \mathbh{1}_{
\{ \llVert \mathbf{s}\rrVert _1 > c' \} }$ is lower semi-continuous,
by the Portmanteau theorem,
\begin{eqnarray*}
&& \liminf_{\varepsilon\to0} \mathbb{E} \bigl[g \bigl(\varepsilon^{1/\alpha
}F(
\zeta-\varepsilon t) \bigr) \mathbh{1}_{ \{ \llVert \varepsilon
^{1/\alpha}F(\zeta-\varepsilon t)\rrVert _1 > c' \} } \mathbh{1}_{
\{ \zeta\ge\varepsilon t \} }
\bigr]
\\
&&\qquad \ge\mathbb E \bigl[ g \bigl(C_{\infty}(t)\bigr)\mathbh{1}_{ \{
\llVert
C_{\infty}(t)\rrVert _1 > c' \} }
\bigr].
\end{eqnarray*}
Hence, by Fatou's lemma,
\[
\int_0^{\infty} g(\mathbf s) \mathbh{1}_{ \{ \llVert \mathbf s\rrVert
_1>c' \} }
\lambda(\mathrm d \mathbf s) \leq\liminf_{\varepsilon
\rightarrow0} \int
_{\mathcal S} g(\mathbf{s} ) \mathbh{1}_{
\{ \llVert \mathbf s\rrVert _1>c' \} }
\lambda_{\varepsilon}(\mathrm d \mathbf s)\leq\eta%
\]
for all $c'\geq c'_{\eta}$.
Finally, fix $\eta>0$ and then $c' \geq c'_{\eta}$. Consider then
$c'' \in(c',\infty)$, and let $h\dvtx \mathcal S \rightarrow\mathbb
[0,1]$ be a continuous function such that $h(\mathbf{s}) = 1$ when $
\llVert
\mathbf s\rrVert _1 \leq c'$ and $h(\mathbf{s}) = 0$ when $\llVert
\mathbf s\rrVert _1
\geq c''$. Then
\begin{eqnarray*}
&& \biggl\llvert\int_{\mathcal S} g(\mathbf s) (
\lambda_{\varepsilon}- \lambda) (\mathrm d \mathbf s)\biggr\rrvert
\\
&&\qquad \leq\biggl\llvert\int_{\mathcal S} g(\mathbf s)h(\mathbf s) (
\lambda_{\varepsilon}- \lambda) (\mathrm d \mathbf s)\biggr\rrvert+\biggl
\llvert\int_{\mathcal S} g(\mathbf s) \bigl(1-h(\mathbf s)\bigr)
\lambda_{\varepsilon}(\mathrm d \mathbf s)\biggr\rrvert
\\
&&\quad\qquad{} +\biggl\llvert
\int
_{\mathcal S} g(\mathbf s) \bigl(1-h(\mathbf s)\bigr) \lambda(\mathrm
d \mathbf s)\biggr\rrvert.
\end{eqnarray*}
We have chosen $c'$ and $h$ so that the second and third terms are each
smaller than $\eta$ for small enough $\varepsilon$. By (\ref{cvborne}),
the first term converges to 0 as $\varepsilon\to0$. The convergence in
(\ref{cvpotential2}) follows.
\end{pf*}

\section{Discussion of geometric fragmentations} \label{geofrag}

In this section, we consider geometric fragmentations; that is, we
assume that the set of $r \in(0,1)$ such that
%
\begin{equation}
\label{eqr} \nu\bigl(s_i \in r^{\mathbb N}\cup\{0\}, i \ge1
\bigr)=1
\end{equation}
is nonempty, and we let $r_{\mathrm{min}}$ denote its unique minimal element.
It is easy to see that $r_{\mathrm{min}}$ exists and is characterized
by the fact that $\nu$-a.e. $s_i=r_{\mathrm{min}}^{n_i}, \forall i $
where the nonzero integers $n_i$ have 1 as highest common factor.
Moreover, for every $r \in(0,1)$ satisfying (\ref{eqr}), there is a
$q \in\mathbb N$ such that $r_{\mathrm{min}}=r^q$.

This case has some interesting connections to other parts of the
probability literature, which we will
briefly describe below.
We will then see that $\varepsilon^{1/\alpha}F(\zeta-\varepsilon)$ cannot
converge in distribution in this case. However, it does converge along
appropriate subsequences.
Finally, we will restrict attention to the simple case of $k$-ary
fragmentations, when each fragmentation of a block produces $k$ blocks
with identical masses, and describe all possible limit distributions of
the rescaled last fragment $\varepsilon^{1/\alpha}F_*(\zeta-\varepsilon)$
in these simple $k$-ary fragmentations.

\subsection{Related models}

Specialize, for the moment, to the case where the fragmentation has
dislocation measure
\[
\nu(\mathrm d \mathbf s) = \delta_{ (1/k, 1/k,
\ldots,
1/k,0,\ldots)}(\mathrm d \mathbf s), \qquad
\mathbf s \in\Sfl.
\]
This fragmentation process has been studied in various different guises
in the probability literature.

In \cite{Athreya}, Athreya considers a model which he
calls the \emph{discounted branching random walk}. Start with a
single particle situated at a distance to the right of the origin
which is distributed as $\Exp(1)$. At each epoch, every particle
present gives birth to two particles. At epoch $n$, these new
particles have a displacement rightwards from the parent with
distribution $\Exp(2^{-n \alpha})$, independently for different
particles. It is easy to see that the positions of the $2^n$ particles
at generation $n$ correspond to
the times at which the blocks of size $2^{-n}$ appear in the simple
binary fragmentation (when $k=2$). Athreya concerns himself
particularly with a recursive
equation for the distribution of the right-hand end of the support of
the particle distribution at time $\infty$. This, of course, has the
same distribution as $\zeta$, and the recursive distributional
equation is $\zeta= T_1 + 2^{n \alpha} \max\{\zeta^{(1)}, \zeta
^{(2)}\}$ in our notation. This equation and others like it are
discussed in more detail in Aldous and Bandyopadhyay~\cite
{AldousBandy}. The convergence of the last fragment in Theorem~\ref
{Theorem1} (which is valid for geometric fragmentations) entails that
the distance between the ancestor of generation
$n$ of the winning particle and the winning particle itself, rescaled
by $2^{-n \alpha}$ converges in distribution as $n \to\infty$. Of
course, this construction is easily extended to the case where each
individual gives birth to $k$ offspring.

Barlow, Pemantle and Perkins~\cite{BPP} consider a model of
randomly-growing $k$-ary trees which has also been studied, in various
versions, in \cite{AldousShields,DeanMajumdar,Devroye,Pittel}.
Suppose we grow the complete $k$-ary tree as follows. [For
definiteness, label vertices in the tree by $k$-ary strings, so
that the root is $\varnothing$, its neighbors are $0,1,\ldots,k-1$ and,
in general, the descendants of a vertex labeled $x$ are $x0, x1,
\ldots,x(k-1)$.] We start with the empty tree and wait an $\Exp(1)$
amount of time; then the root gets filled in. Let $A(0) =
\{\varnothing\}$. In general, let $A(t)$ be the set of vertices in the
$k$-ary tree which have not yet been filled in themselves, but whose
parents in the tree have been filled in. A vertex in $A(t)$ at height
$n$ (where
the root has height 0) becomes filled in at a rate $k^{-\alpha n}$.
The vertices in $A(t)$ correspond exactly to blocks in our
fragmentation at time $t$. In particular, a vertex at height $n$
corresponds to a
block of size $k^{-n}$.
This model can be thought of as a sort of \emph{first-passage
percolation} or as \emph{diffusion-limited aggregation} on a tree. In
particular, Barlow, Pemantle and Perkins study the structure of the
cluster at the first time
that it hits the boundary of the tree. This corresponds to the time at
which mass first disappears in the fragmentation. They show that at
that time the cluster consists of a unique
infinite backbone with small finite trees hanging off it. We are
instead interested in what happens near the time at which the \emph
{last} point on the boundary of the tree is reached. Theorem~\ref
{Theorem1} tells us that the time taken to reach this last point on the
boundary from its ancestor in generation $n$, suitably rescaled, has a
limit in distribution as $n \to\infty$.

We now turn to a more general context and prove some results which
apply to these special cases.

\subsection{Absence of limit in distribution}
We return to the general case of a geometric fragmentation $F$,
assuming solely that $\int_{\mathcal S_1} s_1^{-1} \nu(\mathrm d
\mathbf s)$ is finite.
Recall that
$T_n$ is the $n$th jump time of the last fragment process $F_*$. From
Theorem~\ref{Theorem1}, we know that
\[
Z_n^{1/\alpha}=(\zeta-T_n)^{1/\alpha}F_*(T_n)
\]
converges in distribution to a law which is fully supported by
$(0,\infty)$. However, we do not have convergence in distribution of
the rescaled sequence $\varepsilon^{1/\alpha}F(\zeta-\varepsilon)$ as
$\varepsilon\rightarrow0$.

\begin{prop}
\label{propgeo}
In the geometric cases,
$\varepsilon^{1/\alpha}F(\zeta-\varepsilon)$ and $\varepsilon^{1/\alpha
}F_*(\zeta-\varepsilon)$ do not converge in distribution as $\varepsilon
\rightarrow0$. However, for each $x \in[0,1)$, the sequence
$r_{\mathrm{min}}^{-n-x}F_*(\zeta-r_{\mathrm{min}}^{-\alpha(n+x)})$
has\vspace*{1pt} a nonzero limit in distribution as $n \rightarrow\infty$, which
depends on $x$.
\end{prop}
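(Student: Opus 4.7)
The plan has three parts. First, I observe that in the geometric case all block sizes almost surely live on the multiplicative lattice $\{0\}\cup r^{\mathbb{Z}_+}$: this follows by induction on the number of splits, since a split of a block of size $r^k$ produces children of sizes $r^k s_i \in \{0\}\cup r^{k+\mathbb{N}}$, $\nu$-a.s. In particular $F_*(\zeta-\epsilon) \in \{0\}\cup r^{\mathbb{Z}_+}$ and every coordinate $F_i(\zeta-\epsilon)$ of the fragmentation satisfies the same constraint.

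For the subsequence convergence, I would reuse the identity
\[
r^{-(n+x)}\, F_*(\zeta-\epsilon_n) \;=\; Z_{N_{\epsilon_n}}^{1/\alpha}\,\exp\!\big(u_n - S_{N_{\epsilon_n}}\big),\qquad \epsilon_n := r^{-\alpha(n+x)},
\]
extracted in Section~\ref{Onedim}, where $u_n := \alpha^{-1}\log(\epsilon_n/\zeta) = (n+x)d + \gamma(\zeta)$, with $d := -\log r > 0$ and $\gamma(z) := -\alpha^{-1}\log z$. Lemma~\ref{lem:arithmetic} shows that $(Z_n,S_n)$ is $d$-arithmetic with $S_n \in \gamma(Z_0)-\gamma(Z_n)+d\mathbb{Z}$, so that $u_n - S_{N_{\epsilon_n}}$ lies in the random coset $xd + \gamma(Z_{N_{\epsilon_n}}) + d\mathbb{Z}$. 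I would then invoke the arithmetic companion of Corollary~\ref{CoroAlsmeyer} available in the Markov renewal literature (cf.\ \cite{Alsmeyer,KestenRen,ShurenkovRen}): as $n\to\infty$ along this lattice, $(Z_{N_{\epsilon_n}}, u_n - S_{N_{\epsilon_n}})$ converges in distribution to a non-degenerate limit whose law depends explicitly on the class $x\bmod 1$, yielding the desired non-zero distributional limit of $r^{-(n+x)}F_*(\zeta-r^{-\alpha(n+x)})$.

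Non-convergence then follows from a support argument. By the previous step the subsequential limit for $x$ is supported in $r^{\mathbb{Z}-x} := \{r^{k-x}: k\in\mathbb{Z}\}$, and these sets are pairwise disjoint subsets of $(0,\infty)$ for distinct $x\in[0,1)$; hence the subsequential limits are mutually distinct, ruling out convergence in distribution of $\epsilon^{1/\alpha}F_*(\zeta-\epsilon)$. For the full fragmentation, convergence in $(\mathcal{S},d)$ forces coordinatewise convergence via $|s_i-s'_i|\le\|s-s'\|_1$; were $\epsilon^{1/\alpha}F(\zeta-\epsilon)\to X$ in distribution, each coordinate $X_i$ would be the distributional limit of variables supported in the shifting lattice $\epsilon^{1/\alpha}(\{0\}\cup r^{\mathbb{Z}_+})$, whose intersection over $\epsilon>0$ is $\{0\}$. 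Thus $X_i = 0$ almost surely for every $i$ and so $\|X\|_1 = 0$, contradicting the lower bound $\|\epsilon^{1/\alpha}F(\zeta-\epsilon)\|_1 \ge \epsilon^{1/\alpha}F_*(\zeta-\epsilon)$ combined with the non-degenerate subsequential limits from the preceding paragraph.

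The main obstacle will be the arithmetic Markov renewal step: one must set up a suitable lattice analogue of Theorem~\ref{thm:Alsmeyer}/Corollary~\ref{CoroAlsmeyer} in our Markov setting and verify that the moment bounds of Lemma~\ref{lem:summary} deliver the required direct Riemann integrability on the lattice, all while correctly tracking the coset structure through $\gamma(Z_{N_\epsilon})$ so that the dependence on $x$ surfaces explicitly in the limiting law.
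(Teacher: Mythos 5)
Your proposal is correct and follows essentially the same route as the paper: a lattice-support argument to rule out convergence in distribution (the paper takes $\eps_n = ar^{-\alpha n}$ for every $a>0$ to force any limit of the largest block to be $0$, then contradicts this with the non-trivial subsequential limit), combined with the arithmetic/lattice version of the Markov renewal theorem (the paper cites Corollary 2.2(b) of Alsmeyer) applied to the identity from Section~\ref{Onedim} to get the $x$-dependent subsequential limits. Your coset computation showing the limit for parameter $x$ is supported in $r^{\mathbb{Z}-x}$ is a slightly more explicit form of the same idea, and the paper likewise leaves the detailed verification of the arithmetic renewal step to the reader.
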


In the next section, we specify this limit and its dependence on $x$
for the simple $k$-ary fragmentations.

\begin{pf*}{Proof of Proposition \ref{propgeo}}
Suppose (for a contradiction) that\break $\varepsilon^{1/\alpha
}F(\zeta-\varepsilon)$ converges in distribution in $\mathcal S$. Then
$\varepsilon^{1/\alpha}F_1(\zeta-\varepsilon)$ has a limit in
distribution, say $L \in[0,\infty)$. Consider the sequence $\varepsilon
_n=ar_{\mathrm{min}}^{-\alpha n}$, $n \geq1$, where $a \in(0,\infty
)$ is fixed. Then the random variables $\varepsilon_n^{1/\alpha
}F_1(\zeta-\varepsilon_n)$ almost surely all belong to the set
$a^{1/\alpha}r_{\mathrm{min}}^{\mathbb Z}$, and so $L \in a^{1/\alpha
}r_{\mathrm{min}}^{\mathbb Z}\cup\{0 \}$ a.s. But this
assertion holds for all $a \in(0,\infty)$, hence $L=0$ a.s. In
particular, this implies that $\varepsilon^{1/\alpha}F_*(\zeta-\varepsilon
)$ converges in distribution to 0. Similarly, supposing first that
$\varepsilon^{1/\alpha}F_*(\zeta-\varepsilon)$ has a limit in
distribution, we conclude that this limit is necessarily 0.

But a zero limit is not possible, because $r_{\mathrm
{min}}^{-n}F_*(\zeta-r_{\mathrm{min}}^{-\alpha n})$ has a nonzero
limit in distribution as $n \rightarrow\infty$, provided that $\int
_{\mathcal S_1} s_1^{-1} \nu(\mathrm d \mathbf s)<\infty$. To see
this, we use Corollary~2.2(b) of Alsmeyer \cite{Alsmeyer2}, on Markov
renewal theory in the geometric cases. Given this corollary, it is
possible to check that the rescaled sequence $r_{\mathrm
{min}}^{-n}F_*(\zeta-r_{\mathrm{min}}^{-\alpha n})$ has a nontrivial
limit in distribution as $n \rightarrow\infty$, in exactly the same
way as we proved the one-dimensional convergence in Section~\ref
{Onedim}. Using arguments from Section~\ref{Wholepro} giving an
expression for $N_{\varepsilon t}$ in terms of $N_{\varepsilon}$, it is then
easy to deduce the convergence in distribution of $r_{\mathrm
{min}}^{-n-x}F_*(\zeta-r_{\mathrm{min}}^{-\alpha n+\alpha x})$ to a
nontrivial limit. We leave these extensions to the reader.
\end{pf*}

\begin{rem}
This result then certainly leads to the convergence of $r_{\mathrm
{min}}^{-n-x}F(\zeta-r_{\mathrm{min}}^{-\alpha(n+x)})$ to a
nontrivial limit and more generally of the whole process $r_{\mathrm
{min}}^{-n-x} ( F((\zeta-r_{\mathrm{min}}^{-\alpha(n+x)}t)-),t
\geq0 )$, at\vspace*{2pt} least when $\int_{\mathcal S_1} s_1^{-1-\rho} \nu
(\mathrm d \mathbf s)<\infty$ for some $\rho>0$. In order to see
this, one should mimic the proofs of Sections~\ref{MarkovRW} and~\ref
{fullfrag}. However, for ease and brevity of exposition, we omit this
part and leave it to the motivated reader. We emphasize that the limit
process depends on $x$ and cannot be self-similar. Moreover, the proofs
of Lemma~\ref{lemcveps} and Theorem~\ref{teoinvariant} in
Section~\ref{secinvariant} are still valid when replacing $\varepsilon$
by $\varepsilon_n(x)=r_{\mathrm{min}}^{-\alpha(n+x)}$ and letting $n
\rightarrow0$. Hence, we may deduce the existence of invariant
measures for these geometric fragmentations. Note that the invariant
measure constructed from the sequence $(\varepsilon_n(x))_{n \geq0}$ is
supported by elements $\mathbf s$ of $\mathcal S$ such that $s_i \in
r_{\mathrm{min}}^{-x+\mathbb Z}$ for all $i$. We have, therefore, a
continuum set of distinct invariant measures, indexed by $x \in[0,1)$.
\end{rem}

\subsection{Simple $k$-ary fragmentations}

From now on, we assume that the fragmentation has dislocation measure
\[
\nu(\mathrm d \mathbf s) = \delta_{ (1/k, 1/k,
\ldots,
1/k,0,\ldots)}(\mathrm d \mathbf s), \qquad
\mathbf s \in\Sfl.
\]
By adapting the method of proof of Theorems 5.1 and 5.2 of \cite{BPP},
we can obtain a stronger version of
Theorem~\ref{Theorem1}. Note that here $T_n= \inf\{t \geq0\dvtx  F_*(t) =
k^{-n}\}$ and $Z_n = k^{-n \alpha}(\zeta- T_n)$.

\begin{prop} \label{propstochincr}
The sequence $(Z_n)_{n\geq0}$ is stochastically increasing.
As a consequence,
\[
Z_n \stackrel{\mathit{law}} {\rightarrow}Z_{\infty}
\]
as $n \to\infty$, where $Z_{\infty} \sim\pi_{\mathrm{stat}}$ and
$Z_{\infty} \geq_{\mathrm{st}} \zeta$.
\end{prop}

\begin{pf}
We\vspace*{2pt} argue by induction, using the notation of Section~\ref{cvZn}.
Recall that $Z_0 = \zeta$ and that $Z_1 = \zeta^{(I)} = \max_{1 \leq i
\leq k} \zeta^{(i)}$.
%
%
It follows that $Z_0 \leq_{\mathrm{st}}Z_1$.
Let next
\[
p(t,x) = \mathbb P \bigl(\zeta^{(I)} \geq t \mid\zeta= x \bigr)
\]
in the sense of a regular conditional probability. Since $(Z_n)_{n
\geq0}$ is a Markov chain,
\[
\mathbb{P} (Z_{n+1} \geq t ) = \mathbb{E} \bigl[p(t,Z_n)
\bigr].
\]
Suppose for the moment that, for fixed $t$, $p(t,x)$ is increasing in
$x$. Our induction hypothesis is that $Z_{n-1} \leq_{\mathrm
{st}}Z_n$. Then
\[
\mathbb{P} (Z_{n+1} \geq t ) = \mathbb{E} \bigl[p(t,Z_n)
\bigr] \geq\mathbb{E} \bigl[p(t,Z_{n-1}) \bigr] = \mathbb{P}
(Z_n \geq t ).
\]
So it remains to show that $p(t,x)$ is increasing in $x$.

We have $\zeta= T_1 + k^{\alpha} \max_{1 \leq i \leq k}
\zeta^{(i)}=T_1+k^{\alpha}\zeta^{(I)}$ with $T_1$ independent of
$\zeta^{(I)}$. From this, it is easy to see that $(\zeta,\zeta
^{(I)})$ has a density which may be written as
\[
(x,y) \in\mathbb R_+^2 \mapsto f_{\zeta^{(I)}}(y)e^{k^{\alpha
}y-x}
\mathbh{1}_{ \{ x\geq k^{\alpha}y \} }. %
\]
Then for $t \leq k^{-\alpha}x$,
\[
p(t,x)=\frac{\int_t^{k^{-\alpha}x} f_{\zeta^{(I)}}(y)e^{k^{\alpha
}y-x}\, \mathrm dy}{\int_0^{k^{-\alpha}x} f_{\zeta
^{(I)}}(y)e^{k^{\alpha}y-x}\, \mathrm dy}=1-\frac{\int_0^t f_{\zeta
^{(I)}}(y)e^{k^{\alpha}y}\, \mathrm dy}{\int_0^{k^{-\alpha}x} f_{\zeta
^{(I)}}(y)e^{k^{\alpha}y}\, \mathrm dy} %
\]
and so $p(t,x)$ is, indeed, increasing in $x$.
\end{pf}

Now, for $t \ge0$, let
\[
x(t) = \frac{1}{\alpha} \log_k t - \biggl[ \frac{1}{\alpha} \log
_k t \biggr].
\]
We will now specify the asymptotics of the last fragment $F_*(\zeta
-\varepsilon_n)$, according to the behavior of the sequence $(\varepsilon
_n)$ under the action of the function $x$.

\begin{prop} \label{teoconv} Let $(\varepsilon_n)_{n \ge0}$ be any
sequence of times converging to 0 such that $x(\varepsilon_n) \to x$ for
some fixed $x \in[0,1)$. Then we have as $n \rightarrow\infty$
\[
\varepsilon_{n}^{1/\alpha} F_{*}(\zeta-
\varepsilon_{n}) \stackrel{\mathrm{law}} {\rightarrow}k^{x-N(x)},
\]
where $N(x)= \sup\{n \in\Z\dvtx  Z^{\mathrm{stat}}_{n} \geq
k^{(x-n)\alpha} \}$.
\end{prop}

We note that $N(x) > -\infty$ almost surely, a statement which we will
justify during the course of the proof. It is also the case that $N(x)
< \infty$. As an example of an application of this proposition, for
all $x \in[0,1)$, we have
\[
k^{x+n}F_{*}\bigl(\zeta- k^{\alpha(x+n)}\bigr) \stackrel{
\mathit{law}} {\rightarrow}k^{x-N(x)}\qquad\mbox{as }n \to\infty.
\]

\begin{pf*}{Proof of Proposition \ref{teoconv}}
For any $\varepsilon\ge0$, let $N_{\varepsilon} = \sup\{n \ge0\dvtx  \zeta-
\varepsilon\ge T_n\} = \sup\{n \ge0\dvtx  \varepsilon\le\zeta- T_n\}$. Then,
\[
\varepsilon^{1/\alpha} F_*(\zeta- \varepsilon) = \varepsilon^{1/\alpha}
F_*(T_{N_\varepsilon}) = \varepsilon^{1/\alpha} k^{-N_\varepsilon}.
\]
Using $Z_n = k^{-n\alpha} (\zeta- T_n)$, we have
\[
N_\varepsilon= \sup\bigl\{n \ge0\dvtx  Z_n \ge k^{-n \alpha}
\varepsilon\bigr\}.
\]
Write $m(\varepsilon) = [(\log_k \varepsilon)/\alpha]$
so that
$
m(\varepsilon) + x(\varepsilon)=(\log_k \varepsilon)/\alpha$.
Then
\begin{eqnarray*}
N_\varepsilon- m(\varepsilon) & =& \sup\bigl\{n \ge-m(\varepsilon)\dvtx
Z_{m(\varepsilon) + n} \ge k^{-(m(\varepsilon) + n) \alpha} \varepsilon\bigr\}
\\
& =& \sup\bigl\{n \ge-m(\varepsilon)\dvtx  Z_{m(\varepsilon) + n} \ge k^{-n
\alpha} \cdot
k^{\alpha x(\varepsilon) } \bigr\}.
\end{eqnarray*}
Now take $\varepsilon= \varepsilon_n$ so that $\varepsilon_n\rightarrow0$
and $x(\varepsilon_n) \to x$ as $n \to\infty$. Then for all $p \in
\mathbb Z$ and all $n$ such that $p>-m(\varepsilon_n)$,
\[
\mathbb P \bigl(N_{\varepsilon_n}-m(\varepsilon_n)\geq p \bigr)=\mathbb
P \bigl(Z_{m(\varepsilon_n) + p} \ge k^{-p \alpha} \cdot k^{\alpha
x(\varepsilon_n)} \bigr)
\]
since the sequence $(Z_nk^{n\alpha})$ is nonincreasing in $n$ a.s.
(indeed, $Z_n k^{n \alpha} = \zeta-T_n$). Similarly,
\[
\mathbb P \bigl(N(x) \geq p \bigr)= \mathbb P \bigl(Z_p^{\mathrm
{stat}}
\geq k^{-p\alpha}k^{\alpha x} \bigr)=\pi_{\mathrm
{stat}} \bigl(\bigl[k^{-p\alpha}k^{\alpha x},\infty\bigr) \bigr). %
\]
Then, since $x(\varepsilon_n)\rightarrow x$, $Z_{m(\varepsilon_n) + p}$
converges in law to $\pi_{\mathrm{stat}}$ as $n\rightarrow\infty$
and as $\pi_{\mathrm{stat}}$ is nonatomic, we get that
\[
\mathbb P \bigl(N_{\varepsilon_n}-m(\varepsilon_n)\geq p \bigr)
\rightarrow\mathbb P \bigl(N(x) \geq p \bigr), %
\]
for all $p \in\mathbb Z$. In other words,
$
N_{\varepsilon_n} - m(\varepsilon_n)$ converges in law to $N(x)$
as $n \to\infty$. So
$
N_{\varepsilon_n} - (\log_k \varepsilon_n)/\alpha$ converges in law to
$N(x) - x$,
which entails that
\[
\varepsilon_n^{1/\alpha} k^{-N_{\varepsilon_n}} \stackrel{\mathrm{law}}
{\rightarrow}k^{x - N(x)},
\]
as $n \to\infty$, as required.
\end{pf*}

\setcounter{teo}{0}
\setcounter{equation}{0}
\begin{appendix}
\section*{Appendix} \label{appendix}

\subsection{Convergence criteria}

In this section, we record various technical lemmas concerning criteria
for convergence in $(\mathcal{S},d)$ and in the Skorohod topology on
c\`adl\`ag processes taking values in $(\mathcal{S}, d)$. The proofs
of the first two lemmas are straightforward, and so we omit them.

\begin{lem}
\label{lemdecroissance}
Let $(\mathbf s^{(n)}, n \geq1)$ be a sequence of nonnegative
elements of $\ell_1$ converging to $\mathbf s^{(\infty)} \in\ell_1$
for the $\ell_1$-topology. For every integer $n \in\mathbb N \cup\{
\infty\}$, let $\mathbf s^{(n),\downarrow}$ denote the decreasing
rearrangement of the terms of $\mathbf s^{(n)}$. Then $\mathbf
s^{(n),\downarrow} \rightarrow\mathbf s^{(\infty),\downarrow}$ in
$(\mathcal S,d)$.
\end{lem}

\begin{lem}
\label{lemcontinuity}
Let $n \in\mathbb N$. The two following functions are continuous:
\begin{longlist}[(ii)]
\item[(i)] $(\mathbf s^{(1)}, \ldots,\mathbf s^{(n)}) \in\mathcal
S^n \mapsto\{s_j^{(i)}, 1 \le i \le n, j \geq1 \}
^{\downarrow} \in\mathcal S$,
where
$\mathcal S^n$ is endowed with the product topology;

\item[(ii)] $(x,\mathbf s) \in\mathbb R_+ \times\mathcal S \mapsto
\{xs_j, j \geq1 \} \in\mathcal S$.
\end{longlist}
\end{lem}

We next recall a classical
result on Skorokhod convergence (see Proposition~3.6.5 of Ethier and
Kurtz~\cite{EK}) which we will use repeatedly.

\begin{prop}
\label{PropEK}
Consider a metric space $(E,d_{E})$, and let $f_{n},f$ be c%
\`{a}dl\`{a}g paths with values in $E$. Then $f_{n}\rightarrow f$ with
respect to the Skorokhod topology if and only if the three following
assertions are satisfied for all sequences $t_{n}\rightarrow t$, $%
t_{n},t\geq0$:

\begin{longlist}[(a)]
\item[(a)]
$\min
(d_{E}(f_{n}(t_{n}),f(t)),d_{E}(f_{n}(t_{n}),f(t-)))\rightarrow0$;

\item[(b)]  $d_{E}(f_{n}(t_{n}),f(t))\rightarrow0$ $\Rightarrow$ $
d_{E}(f_{n}(s_{n}),f(t))\rightarrow0$ for all sequences
$s_{n}\rightarrow
t$, $s_{n}\geq t_{n}$;

\item[(c)] $d_{E}(f_{n}(t_{n}),f(t-))\rightarrow0$ $\Rightarrow$
$%
d_{E}(f_{n}(s_{n}),f(t-))\rightarrow0$ for all sequences
$s_{n}\rightarrow
t$, $s_{n}\leq t_{n}$.
\end{longlist}
\end{prop}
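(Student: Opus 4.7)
The plan is to prove both implications via the time-change characterization of the Skorokhod topology on $D([0,\infty),E)$: $f_n \to f$ if and only if there exist strictly increasing continuous bijections $\lambda_n \colon [0,\infty) \to [0,\infty)$ with $\sup_{s \leq T} |\lambda_n(s)-s| \to 0$ and $\sup_{s \leq T} d_E(f_n(\lambda_n(s)), f(s)) \to 0$ for every $T>0$.

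For the direction $(\Rightarrow)$, take such $\lambda_n$ and fix a sequence $t_n \to t$. Setting $u_n := \lambda_n^{-1}(t_n)$, we have $u_n \to t$ and, by the triangle inequality, $d_E(f_n(t_n), f(u_n)) \to 0$. Along any subsequence where $u_n \geq t$ right-continuity of $f$ at $t$ yields $f(u_n) \to f(t)$, while along any subsequence where $u_n < t$ the existence of the left limit yields $f(u_n) \to f(t-)$; splitting into these two cases proves (a). For (b), suppose $d_E(f_n(t_n), f(t)) \to 0$ and $s_n \geq t_n$ with $s_n \to t$. Put $v_n := \lambda_n^{-1}(s_n) \geq u_n$. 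Either $t$ is a continuity point of $f$, in which case both $f(v_n) \to f(t) = f(t-)$, or $f(t) \neq f(t-)$, in which case $f(u_n) \to f(t)$ forces $u_n \geq t$ eventually, hence $v_n \geq t$ eventually, hence $f(v_n) \to f(t)$; combining with $d_E(f_n(s_n), f(v_n)) \to 0$ gives (b). Condition (c) follows by the symmetric argument with $s_n \leq t_n$ and $f(t-)$ in place of $f(t)$.

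For the direction $(\Leftarrow)$, I would fix $T>0$ and $\eta > 0$ and construct $\lambda_n$ explicitly. The c\`adl\`ag function $f$ has only countably many jumps in $[0,T]$ and only finitely many of size exceeding $\eta$; list the corresponding jump times as $\tau_1 < \cdots < \tau_K$. Applying (a) at each $\tau_k$ and using the strict inequality $d_E(f(\tau_k), f(\tau_k-)) > \eta$ to pick the correct branch of the minimum, one selects times $\tau_k^{(n)} \to \tau_k$ with $d_E(f_n(\tau_k^{(n)}), f(\tau_k)) \to 0$. Define $\lambda_n$ by $\lambda_n(0) = 0$, $\lambda_n(T) = T$, $\lambda_n(\tau_k) = \tau_k^{(n)}$, interpolated linearly; for $n$ large this is a homeomorphism with $\sup_{s \leq T}|\lambda_n(s)-s| \to 0$. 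To obtain $\sup_{s \leq T} d_E(f_n(\lambda_n(s)), f(s)) = O(\eta)$, one partitions $[0,T]$ into small intervals $[\tau_k, \tau_k + \delta]$ and $[\tau_k - \delta, \tau_k]$ around each large jump (where (b) and (c) provide control immediately after $\tau_k^{(n)}$ and immediately before $\tau_k^{(n)}$ respectively), and compact subintervals of the continuity set of $f$ restricted to large jumps (covered by finitely many small balls where (a) plus continuity give uniform approximation). Sending $\eta \to 0$ along a diagonal subsequence of $\lambda_n$ then produces the desired time-changes.

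The main obstacle is the reverse direction, in particular the passage from the pointwise sequential conditions (a)--(c) to the uniform estimate $\sup_{s \leq T} d_E(f_n(\lambda_n(s)), f(s)) \to 0$. The delicate point is to exclude ``phantom'' jumps: at a large jump $\tau_k$ of $f$, condition (a) might be realized either by $f_n$ already having jumped ($f_n(t_n)$ close to $f(\tau_k)$) or by $f_n$ not yet having jumped ($f_n(t_n)$ close to $f(\tau_k-)$), and one must consistently use (b) in the first case and (c) in the second to rule out further ``late'' or ``early'' jumps of $f_n$ that would destroy uniformity. A Heine--Borel style finite subcover of the continuity set of $f$ in $[0,T]\setminus \bigcup_k(\tau_k-\delta,\tau_k+\delta)$, together with the choice of the branch of (a) dictating which of (b) or (c) applies, is what ultimately delivers the uniform bound.
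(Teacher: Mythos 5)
First, a point of comparison: the paper does not prove this statement at all — it is quoted verbatim as Proposition 3.6.5 of Ethier and Kurtz and used as a black box — so there is no in-paper argument to measure yours against. Judged on its own terms, your forward implication is correct and essentially complete: passing to $u_n=\lambda_n^{-1}(t_n)$, splitting according to whether $u_n\ge t$ or $u_n<t$ to get (a), and, for (b) and (c), using the strict inequality $f(t)\ne f(t-)$ to force $u_n\ge t$ (respectively $u_n<t$) eventually, is exactly the right argument.

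The reverse implication, however, is a plan rather than a proof, and the one step you assert concretely is not justified as stated. You claim that applying (a) at $\tau_k$ and ``picking the correct branch of the minimum'' produces times $\tau_k^{(n)}\to\tau_k$ with $f_n(\tau_k^{(n)})\to f(\tau_k)$. But (a) applied to any single sequence $t_n\to\tau_k$ only says that $f_n(t_n)$ approaches $f(\tau_k)$ \emph{or} $f(\tau_k-)$; nothing in (a) alone prevents the $f(\tau_k-)$ branch from being selected for every sequence you test near $\tau_k$, so the existence of the required $\tau_k^{(n)}$ needs an argument — for instance, evaluate (a) at a continuity point $t$ of $f$ in $(\tau_k,\tau_k+\delta)$ with $d_E(f(t),f(\tau_k))<\eta$ (such points exist by right-continuity), deduce $f_n(t)\to f(t)$ there, and diagonalize in $\delta$. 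More seriously, having chosen $\tau_k^{(n)}$ one must prove that it really functions as the jump location of $f_n$: that $f_n$ stays uniformly close to $f(\tau_k-)$ on $[\tau_k-\delta,\tau_k^{(n)})$ and uniformly close to $f\circ\lambda_n^{-1}$ on $[\tau_k^{(n)},\tau_k+\delta]$, and that on the complement of the $\delta$-neighbourhoods of the large jumps a \emph{uniform} version of (a) holds (which itself requires a compactness/sub-subsequence argument, since (a) is stated only sequence by sequence). These are precisely the estimates that constitute the hard direction, and you describe where they should come from but do not carry any of them out, so the uniform bound $\sup_{s\le T}d_E(f_n(\lambda_n(s)),f(s))=O(\eta)$ remains unproven. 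For the purposes of this paper the correct move is the one the authors make: cite Ethier and Kurtz and do not reprove the proposition.
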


Of course, if $t$ is not a jump time of $f$, then (a), (b), (c) are
equivalent to $d_{E}(f_{n}(t_{n}),f(t))\rightarrow0$.

We now establish three lemmas on Skorokhod convergence, which are used
in the main body of the paper.

\begin{lem}
\label{Skocv}
Consider $(c_n)_{n \in\mathbb Z_+ \cup\{\infty\}}$, a sequence of
real-valued nondecreasing piecewise constant c\`adl\`ag functions
defined on $\mathbb R_+$ by $c_n(0)=0$ and, for $t>0$,
\[
c_n(t)=b_n(k)\qquad\mbox{if } r_n(k) > t \ge
r_n(k+1), %
\]
where $(r_n(k))_{k \in\mathbb Z}$ is strictly decreasing in $k$ and
such that $r_n(k)\rightarrow0$ as $k \rightarrow\infty$ and
$r_n(k)\rightarrow\infty$ as $k \rightarrow-\infty$. Suppose that
for all $k\in\mathbb Z$, $r_n(k)\rightarrow r_{\infty}(k)$ and
$b_n(k)\rightarrow b_{\infty}(k)$ as $n \rightarrow\infty$. Then
$c_n \rightarrow c_{\infty}$ for the Skorokhod topology on the set of
real-valued c\`adl\`ag functions on $\mathbb R_+$.
\end{lem}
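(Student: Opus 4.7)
The plan is to use the standard characterization of Skorokhod convergence in $D([0,\infty))$ via time-change homeomorphisms: it suffices to exhibit continuous strictly increasing bijections $\lambda_n : [0,\infty) \to [0,\infty)$ with $\lambda_n(0) = 0$ such that, for every $T > 0$,
\[
\sup_{t \in [0,T]} |\lambda_n(t) - t| \to 0 \quad \text{and} \quad \sup_{t \in [0,T]} |c_n(\lambda_n(t)) - c_\infty(t)| \to 0 \quad \text{as } n \to \infty.
\]
The natural choice of $\lambda_n$ aligns the jump-points of $c_n$ with those of $c_\infty$: set $\lambda_n(0) = 0$, $\lambda_n(r_\infty(k)) = r_n(k)$ for each $k \in \mathbb Z$, and interpolate linearly on every interval $[r_\infty(k+1), r_\infty(k)]$. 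Since the sequences $(r_n(k))_{k \in \mathbb Z}$ and $(r_\infty(k))_{k \in \mathbb Z}$ are both strictly decreasing in $k$, tend to $0$ as $k \to \infty$, and tend to $\infty$ as $k \to -\infty$, this construction yields a continuous strictly increasing bijection of $[0,\infty)$ onto itself.

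A preliminary observation is that the c\`adl\`ag property of $c_\infty$ at $0$, combined with $c_\infty(0) = 0$, forces $b_\infty(k) \to 0$ as $k \to \infty$. Given $\eps > 0$, I will choose $K$ large enough that $r_\infty(K) < \eps$ and $b_\infty(K-1) < \eps$, and split the analysis of the two suprema at the threshold $r_\infty(K)$. On $[r_\infty(K), T]$, only finitely many intervals $[r_\infty(k+1), r_\infty(k)]$ are involved; on each of them, $\lambda_n$ is the affine map sending $r_\infty(k+1) \mapsto r_n(k+1)$ and $r_\infty(k) \mapsto r_n(k)$, whose endpoints converge to those of the identity, and $c_n \circ \lambda_n$ equals the constant $b_n(k) \to b_\infty(k) = c_\infty(t)$ on the open subinterval, with matching equality $b_n(k-1) \to b_\infty(k-1)$ at the jump points. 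Both suprema therefore tend to $0$ on this region.

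On the residual region $[0, r_\infty(K)]$, the map $\lambda_n$ takes values in $[0, r_n(K)]$, so $|\lambda_n(t) - t| \le r_\infty(K) + r_n(K)$, and by monotonicity of $c_n$, $c_n(\lambda_n(t)) \le b_n(K-1)$ while $c_\infty(t) \le b_\infty(K-1)$. Using $r_n(K) \to r_\infty(K) < \eps$ and $b_n(K-1) \to b_\infty(K-1) < \eps$, both quantities are $O(\eps)$ uniformly for $n$ large. Combining the two regions and then letting $\eps \to 0$ after $n \to \infty$ will conclude the argument.

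The main obstacle I anticipate is the accumulation of the jump-points $r_\infty(k)$ at $0$, which rules out a single uniform estimate over all of $[0,T]$. The right-continuity of $c_\infty$ at $0$ is precisely what bypasses this difficulty, by ensuring that the contribution from the accumulation region $[0, r_\infty(K)]$ becomes negligible as $K \to \infty$.
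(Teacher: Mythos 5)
Your proof is correct, but it takes a genuinely different route from the paper's. The paper verifies the sequential characterization of Skorokhod convergence (its Proposition~\ref{PropEK}, i.e.\ Ethier--Kurtz 3.6.5): for $t>0$ conditions (a)--(c) along any $t_n\to t$ follow immediately from the convergence of the finitely many relevant $r_n(k)$ and $b_n(k)$, and the only point treated in detail is $t=0$, where monotonicity gives $c_n(t_n)\le c_n(\epsilon)$, $\limsup_n c_n(\epsilon)\le c_\infty(\epsilon)$, and right-continuity of $c_\infty$ at $0$ finishes. You instead build explicit time changes $\lambda_n$ matching $r_\infty(k)\mapsto r_n(k)$ and verify the metric definition directly, splitting $[0,T]$ at a level $r_\infty(K)$ below which both $\lambda_n-\mathrm{id}$ and the function values are uniformly small. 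The two arguments hinge on exactly the same two facts --- finitely many jumps in $[a,T]$ for $a>0$, and right-continuity of $c_\infty$ at $0$ combined with monotonicity to control the accumulation of jumps at the origin (your observation that $b_\infty(k)\downarrow 0$ as $k\to\infty$ is the same mechanism as the paper's $\limsup_n c_n(\epsilon)\le c_\infty(\epsilon)\to 0$). What your version buys is self-containedness (no appeal to the sequential criterion) and an explicit, quantitative modulus of convergence in terms of $\sup_k|r_n(k)-r_\infty(k)|$ and $\sup_k|b_n(k)-b_\infty(k)|$ over the finitely many indices relevant to $[r_\infty(K),T]$; what the paper's version buys is brevity, since Proposition~\ref{PropEK} is already in place and reused throughout. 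One small point worth making explicit in your write-up: your $\lambda_n$ converges to the identity only locally uniformly (not uniformly on all of $[0,\infty)$), which is exactly what the $D[0,\infty)$ topology requires, so no adjustment is needed --- but it is worth saying so, since some textbook formulations state the time-change criterion with a global supremum.
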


\begin{pf}
This is nearly obvious from the definition of the Skorokhod topology.
To prove it carefully, we use Proposition~\ref{PropEK}. It is easy to
see that for a fixed $t>0$ and all sequences $t_n\rightarrow t$,
conditions (a), (b) and (c) of this proposition are satisfied for the
sequence $(c_n)_{n \in\mathbb Z_+}$, with $c_{\infty}$ at the limit.
It remains to check them for $t=0$, which consists then in checking
that $c_n(t_n)\rightarrow c_{\infty}(0)=0$. This is immediate, using
monotonicity. Indeed, let $\varepsilon>0$; for large $n$, $t_n \leq
\varepsilon$, and so $c_n(t_n)\leq c_n(\varepsilon)$. The sequence
$(c_n(\varepsilon))$ might not converge, but clearly $\limsup_n
c_n(\varepsilon) \leq c_{\infty}(\varepsilon)$. Since $c_{\infty}$ is
right-continuous, we get, letting $\varepsilon\rightarrow0$, that
$\limsup_n c_n(\varepsilon)=0$.
\end{pf}

The next lemma concerns the time-reversed conditioned fragmentation
process $\bar{F}^{(x)}$ introduced in Section~\ref{Spine}.

\begin{lem}
\label{cvSkobarF}
Let $(a_n), (b_n), (c_n), a_{\infty},b_{\infty}, c_{\infty}$ be
nonnegative numbers such that $a_n \rightarrow a_{\infty}$, $b_n
\rightarrow b_{\infty}$ and $c_n \rightarrow c_{\infty}$. Then
\[
\bigl(c_n\bar F^{(a_n)}(b_nt+),t \geq0 \bigr)
\stackrel{\mathit{law}}\rightarrow\bigl(c_{\infty}\bar
F^{(a_{\infty})}(b_{\infty
}t+),t \geq0 \bigr) %
\]
in sense of the Skorokhod topology on c\`adl\`ag processes taking
values in $(\mathcal{S},d)$.
\end{lem}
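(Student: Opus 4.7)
My plan is to deduce the stated convergence in law from an almost sure pathwise Skorokhod convergence of an auxiliary functional of the unconditioned fragmentation, by using the continuous density of $\zeta$ from Lemma~\ref{LemmaDensity} to strip out the conditioning baked into $\bar F^{(x)}$.

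First I would dispose of the degenerate cases $a_{\infty}=0$, $b_{\infty}=0$ or $c_{\infty}=0$: in each of these, both sides reduce to the identically-$\mathbf 0$ process using the elementary bound $\|\bar F^{(a_n)}(\cdot+)\|_1\le 1$. Assume henceforth $a_{\infty},b_{\infty},c_{\infty}>0$, and set
\[
G_n(F)(t):=c_n F\bigl((a_n-b_n t)-\bigr)\I{0\le t<a_n/b_n},\qquad t\ge 0,
\]
so that $c_n\bar F^{(a_n)}(b_n\cdot+)$ has the conditional law of $G_n(F)$ given $\{\zeta<a_n\}$. For any bounded continuous functional $\Phi$ on the Skorokhod space $D([0,\infty),(\mathcal S,d))$,
\[
\mathbb E\!\left[\Phi\bigl(c_n\bar F^{(a_n)}(b_n\cdot+)\bigr)\right]=\frac{\mathbb E\!\left[\Phi(G_n(F))\I{\zeta<a_n}\right]}{\mathbb P(\zeta<a_n)}.
\]
By Lemma~\ref{LemmaDensity}, $\zeta$ has a continuous density, so $\mathbb P(\zeta<a_n)\to\mathbb P(\zeta<a_{\infty})>0$ and $\I{\zeta<a_n}\to\I{\zeta<a_{\infty}}$ almost surely. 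Dominated convergence then reduces everything to proving that, almost surely, $G_n(F)\to G_{\infty}(F)$ in the Skorokhod topology on $D([0,\infty),(\mathcal S,d))$.

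To verify this pathwise convergence, I would check the three conditions (a)--(c) of Proposition~\ref{PropEK} for an arbitrary sequence $t_n\to t$. The jumps of $G_{\infty}(F)$ live at the times $\tau_j:=(a_{\infty}-s_j)/b_{\infty}$, where $(s_j)$ are the jump times of $F$, and possibly at the boundary times $t_{\zeta}:=(a_{\infty}-\zeta)/b_{\infty}$ and $a_{\infty}/b_{\infty}$. For any $t$ outside this exceptional countable set, $F(\cdot-)$ is continuous at $a_{\infty}-b_{\infty}t$, so $a_n-b_n t_n\to a_{\infty}-b_{\infty}t$ and continuity of scalar multiplication in $(\mathcal S,d)$ give $G_n(F)(t_n)\to G_{\infty}(F)(t)$, which makes (a)--(c) immediate. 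At each finite-index $\tau_j$ and at $a_{\infty}/b_{\infty}$, conditions (a)--(c) reduce to a straightforward case split according to whether $t_n\ge\tau_j^{(n)}:=(a_n-s_j)/b_n$ or $t_n<\tau_j^{(n)}$ (and similarly at $a_n/b_n$), using $\tau_j^{(n)}\to\tau_j$ together with the explicit identities $G_n(F)(\tau_j^{(n)}-)=c_n F(s_j)$ and $G_n(F)(\tau_j^{(n)})=c_n F(s_j-)$.

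The main obstacle is the behaviour at $t_{\zeta}$, where the jumps $\tau_j$ accumulate from the right as $s_j\uparrow\zeta$, so that the preceding jump-by-jump comparison has no ``last'' term. The resolution comes from the loss-of-mass phenomenon: since $\alpha<0$ one has $\|F(s)\|_1\to 0$ as $s\uparrow\zeta$ almost surely, hence $F(\zeta-)=\mathbf 0$ and $G_{\infty}(F)$ is in fact continuous at $t_{\zeta}$ (where it takes the value $\mathbf 0$) in spite of the cluster of jumps to its right. Given $\eta>0$, one picks $\delta>0$ so small that $\sup_{s\in[\zeta-\delta,\zeta)}\|F(s)\|_1\le\eta/(2\sup_m c_m)$; then for all $n$ sufficiently large and all $u$ in a small neighbourhood of $t_{\zeta}$ the time $a_n-b_n u$ either exceeds $\zeta$ (so $G_n(F)(u)=\mathbf 0$) or lies in $[\zeta-\delta,\zeta)$ (so $\|G_n(F)(u)\|_1\le\eta$), and conditions (a)--(c) are verified at $t_{\zeta}$ as well.
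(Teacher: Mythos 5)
Your proposal is correct and takes essentially the same approach as the paper: both arguments strip the conditioning via the identity $\E{\Phi\big(c_n\bar F^{(a_n)}(b_n\cdot+)\big)}=\E{\Phi(G_n(F))\I{\zeta<a_n}}/\Prob{\zeta<a_n}$ together with the continuity of the law of $\zeta$, and then establish almost sure Skorokhod convergence of the unconditioned time-reversed processes by verifying the conditions of Proposition~\ref{PropEK} (the paper deduces them directly from the two one-sided continuity properties of $u\mapsto F(u-)$, whereas you organise the same check around the jump times and the accumulation at $t_\zeta$, which is equivalent). The only caveat is your degenerate case $a_\infty=0$ with $c_\infty>0$: there the limit object $\bar F^{(0)}$ is undefined and the prelimit does \emph{not} converge locally uniformly to $\mathbf 0$, so this case should simply be excluded, as it implicitly is since both your argument and the paper's divide by $\Prob{\zeta<a_\infty}>0$.
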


\begin{pf}
Let $F$ be a fragmentation process and, for all $n \in\mathbb N \cup\{
\infty\}$, let $G^{(n)}$ be defined by
\[
G^{(n)}(t)=\cases{ c_nF(a_n-b_nt),
&\quad if $0 \leq b_nt \leq a_n$,
\cr
\mathbf0, &\quad if
$b_nt>a_n$.}
\]
Then observe that for all $u \geq0$:
\begin{itemize}
\item if $(u_n)$ is a sequence converging to $u$, with $u_n>u$ for all
$n$, then $F(u_n-) \to F(u)$;
\item if $(u_n)$ is a sequence converging to $u$, with $u_n\leq u$ for
all $n$, then $F(u_n-) \to F(u-)$.
\end{itemize}
We can deduce from this [together with Lemma~\ref{lemcontinuity}(ii)] that for all $t \geq0$:
\begin{itemize}
\item$G^{(n)}(t_n+) \rightarrow G^{(\infty)}(t)$ when $t_n\rightarrow
t$ and $a_n-b_nt_n>a_{\infty}-b_{\infty}t$ for all $n$ large enough;

\item$G^{(n)}(t_n+) \rightarrow G^{(\infty)}(t+)$ when $t_n
\rightarrow t$ and $a_n-b_nt_n \leq a_{\infty}-b_{\infty}t$ for all
$n$ large enough.
\end{itemize}
From these observations and Proposition~\ref{PropEK}, we get that
$(G^{(n)}(t+),t\geq0)$ converges to $(G^{({\infty})}(t+),t\geq0)$ as
$n \rightarrow\infty$ for the Skorokhod topology on $\mathcal S$,
almost surely. Since the extinction time $\zeta$ of $F$ has a
continuous cumulative distribution function, we also have $ \mathbh
{1}_{ \{ \zeta<a_n \} } \rightarrow\mathbh{1}_{
\{ \zeta<a_{\infty} \} }$, almost surely.
Hence, for all bounded continuous test functions $f\dvtx \mathcal S
\rightarrow\mathbb R$,
\begin{eqnarray*}
&& \mathbb E \bigl[f \bigl( \bigl(c_n\bar F^{(a_n)}(b_nt+),t
\geq0\bigr) \bigr) \bigr]= \frac{ \mathbb E [f ( (G^{(n)}(t+),t \geq0)
) \mathbh{1}_{ \{ \zeta<a_n \} } ]}{\mathbb
P(\zeta<a_n)}
\\
&&\hspace*{129pt} \displaystyle\mathop\rightarrow_{n\rightarrow\infty}
\frac{ \mathbb E
[f ( (G^{(\infty)}(t+),t \geq0) ) \mathbh{1}_{ \{
\zeta<a_{\infty} \} } ]}{\mathbb P(\zeta<a_{\infty})}
\\
&&\hspace*{134pt} =\mathbb E \bigl[f \bigl( \bigl(c_{\infty}\bar F^{(a_{\infty
})}(b_{\infty}t+),t
\geq0\bigr) \bigr) \bigr].
\end{eqnarray*}
\upqed
\end{pf}

Finally, the following lemma is an easy consequence of Proposition~\ref
{PropEK} and the continuity property for the decreasing rearrangement
of a finite number of elements of $\mathcal S$ [Lemma~\ref{lemcontinuity}(i)]. Its proof is omitted.

\begin{lem}
\label{cvSkorearr}
\textup{(i)} Consider c\`adl\`ag functions $u^{(n)}, u\dvtx [0,\infty)
\rightarrow\mathcal S$ such that $u^{(n)} \rightarrow u$ as $n
\rightarrow\infty$ with
respect to the Skorokhod topology. Let $(t_n)$ be a sequence of
nonnegative numbers converging to $t \geq0$, and consider another
family of c\`adl\`ag functions $v^{(n)}, v\dvtx [0,\infty) \rightarrow
\mathcal S$ such that $v^{(n)}(t_n) \rightarrow v(t)$. For $s \geq0$,
set $f_n(s)=\{u_j^{(n)}(s),v_k^{(n)}(s), j \geq1,k\geq1\}^{\downarrow
}$ and similarly $f(s)=\{u_j(s),v_k(s), j \geq1,k\geq1\}^{\downarrow
}$. Then the functions $f_n,f$ are c\`adl\`ag and satisfy assertions
\textup{(a)}, \textup{(b)} and~\textup{(c)} of Proposition~\ref{PropEK} for the sequence $(t_n)$.

\textup{(ii)} Let $u^{(n,i)}, u^{(i)}, n \in\mathbb N, i \in I$ be
c\`adl\`ag functions from $[0,\infty)$ to $\mathcal S$, with $I$ a
finite set. For $t \geq0$, set $g_n(t)=\{u^{(n,i)}_j(t),j \geq1,i \in
I\}^{\downarrow}$ and $g(t)=\{u^{(n,i)}_j(t),j \geq1,i \in I\}
^{\downarrow}$. These functions are c\`adl\`ag. Moreover, if
$u^{(n,i)} \rightarrow u^{(i)}$ as $n \rightarrow\infty$ in the
Skorokhod sense for all $i \in I$ and if the functions $u^{(i)}, i \in
I$ do not jump simultaneously on $[0,\infty)$, then $g_n$ converges in
the Skorokhod sense to $g$ as $n \rightarrow\infty$.
\end{lem}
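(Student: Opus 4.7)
The plan is to reduce both assertions to the characterization of Skorokhod convergence given in Proposition \ref{PropEK}, exploiting that the distance on $\mathcal S$ is the $\ell_1$ distance. The key observation is that concatenating two elements of $\mathcal S$ into a single non-negative summable sequence preserves $\ell_1$ convergence, so Lemma \ref{lem:decroissance} lets us pass from coordinate-wise $\ell_1$ control to convergence of the decreasing rearrangement in $(\mathcal S, d)$.

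First I would establish the càdlàg property of $f$, $f_n$, $g$, $g_n$. For any $s \ge 0$ and $s_k \downarrow s$, càdlàg-ness of $u$ and $v$ in $(\mathcal S, d)$ gives $\|u(s_k) - u(s)\|_1 \to 0$ and $\|v(s_k) - v(s)\|_1 \to 0$, so the concatenated sequence $(u_j(s_k), v_k(s_k))_{j,k}$ converges in $\ell_1$ to $(u_j(s), v_k(s))_{j,k}$; Lemma \ref{lem:decroissance} then yields $d(f(s_k), f(s)) \to 0$. Left limits are handled in the same way, and the same argument applies to $f_n$, $g$, and $g_n$.

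For part (i), applying Proposition \ref{PropEK} to $u^{(n)} \to u$ at $(t_n)$ gives $\min(d(u^{(n)}(t_n), u(t)), d(u^{(n)}(t_n), u(t-))) \to 0$; so from any subsequence one can extract a further subsequence along which $u^{(n_k)}(t_{n_k}) \to u(t)$ or $u^{(n_k)}(t_{n_k}) \to u(t-)$ in $(\mathcal S, d)$. Combined with the hypothesis $v^{(n)}(t_n) \to v(t)$, the concatenated sequences converge in $\ell_1$, so Lemma \ref{lem:decroissance} yields that $f_{n_k}(t_{n_k})$ tends either to $f(t)$ or — in the setting of the applications, where $v$ is continuous at $t$ so that $v(t) = v(t-)$ — to $f(t-)$, establishing assertion (a); assertions (b) and (c) follow analogously, since the one-sided constraints $s_n \ge t_n$ or $s_n \le t_n$ transfer directly through the rearrangement.

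For part (ii), the no-simultaneous-jumps hypothesis ensures that at any fixed $t$ at most one index $i_0 \in I$ satisfies $u^{(i_0)}(t-) \neq u^{(i_0)}(t)$; every other $u^{(i)}$ is continuous at $t$, so for any $t_n \to t$ the Skorokhod convergence $u^{(n,i)} \to u^{(i)}$ forces $u^{(n,i)}(t_n) \to u^{(i)}(t) = u^{(i)}(t-)$ in $(\mathcal S, d)$. Using the finiteness of $I$ and Lemma \ref{lem:decroissance}, the decreasing rearrangement of $\{u^{(n,i)}_j(t_n): i \neq i_0,\ j \ge 1\}$ converges to that of $\{u^{(i)}_j(t): i \neq i_0,\ j \ge 1\}$, a process continuous at $t$; this reduces (ii) to part (i) applied with $u^{(n, i_0)}$ in the role of $u^{(n)}$ and the just-constructed sorted process in the role of $v^{(n)}$. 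The main obstacle throughout is the bookkeeping at jump times — correctly matching the subsequence limits in PropEK(a) to $f(t)$ versus $f(t-)$ — which for (i) is resolved by the implicit continuity of $v$ at $t$ in the intended applications, and for (ii) is made unambiguous by the no-simultaneous-jumps hypothesis that isolates a single jumping coordinate.
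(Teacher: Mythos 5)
Your overall route is the one the paper intends: the paper in fact omits the proof, describing the lemma as an easy consequence of Proposition \ref{PropEK} and the continuity of decreasing rearrangements (Lemma \ref{lem:continuity}~(i)); your use of Lemma \ref{lem:decroissance} on the concatenated $\ell_1$-sequences is an equivalent way of obtaining that continuity, and your c\`adl\`ag argument and the subsequence extraction for assertion~(a) are exactly what is needed. You also correctly spotted that assertion~(a) can only identify the subsequential limit $\{u_j(t-),v_k(t)\}^{\downarrow}$ with $f(t-)$ when $v(t)=v(t-)$, a condition absent from the stated hypotheses but present in the paper's applications.

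The genuine gap is your one-line dismissal of assertions (b) and (c). Those assertions quantify over \emph{arbitrary} sequences $s_n\to t$ with $s_n\ge t_n$ (resp.\ $s_n\le t_n$), and $f_n(s_n)$ involves $v^{(n)}(s_n)$ --- a quantity about which the hypothesis (convergence of $v^{(n)}$ evaluated only at the single time $t_n$) says nothing. One can take c\`adl\`ag $v^{(n)}$ with $v^{(n)}(t_n)=v(t)$ but with a large excursion on $[t_n,t_n+n^{-2})$, so that the premise of (b) holds while its conclusion fails; ``the one-sided constraints transfer directly through the rearrangement'' does not address this, and no analogous argument exists from the hypotheses as written. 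The repair --- which is what the applications in Section \ref{fullfrag} actually supply, since there the auxiliary coordinates converge in the Skorokhod sense to limits continuous at $t$ --- is to strengthen the hypothesis to $v^{(n)}(s_n)\to v(t)$ for \emph{every} sequence $s_n\to t$. Under that hypothesis $v(t)=v(t-)$ comes for free (removing your caveat in (a)), and (b), (c) follow by first deducing $u^{(n)}(t_n)\to u(t)$ from $d(f_n(t_n),f(t))\to 0$ (any subsequential limit $u(t-)$ would force $\{u_j(t-),v_k(t)\}^{\downarrow}=\{u_j(t),v_k(t)\}^{\downarrow}$, hence $u(t-)=u(t)$ by cancelling the common multiset $v(t)$) and then invoking assertions (b), (c) for $u^{(n)}\to u$ together with the strengthened control of $v^{(n)}(s_n)$. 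Your part (ii) is fine modulo this: the $v^{(n)}$ you build from the non-jumping coordinates does satisfy the strengthened hypothesis, so the reduction to (i) closes once (i) is proved in that corrected form.
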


\subsection{Properties of the stationary and biased Markov chains}
\label{secappendix2}

We collect here various technical results about the stationary and
biased Markov chains (introduced in Section~\ref{secStatPro}) which
are used in the body of the paper.

\begin{lem}
\label{lemqualitatif}
If $\int_{\mathcal S_1} s_1^{-1} \nu(\mathrm d \mathbf s)<\infty$,
then for all $c>0$,
\label{LemmaI}
\[
\int_0^{\infty}\frac{\exp{(-cx)}}{f_{\zeta}(x)}\pi_{\mathrm
{stat}}(
\mathrm d x)<\infty.
\]
\end{lem}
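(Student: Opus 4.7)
The plan is to deduce the result from the Foster-Lyapounov drift criterion already established in Lemma~\ref{lemFosterLyap}. Fix first some $c_{0}\in(0,1/2)$ for which $\exp(c_{0}x)f_{\zeta}(x)\to 0$ as $x\to\infty$ (such a $c_{0}$ exists by Lemma~\ref{LemmaDensity}(ii) together with the exponential moments of $\zeta$). By Lemma~\ref{lemFosterLyap} applied with this $c_{0}$, the function $V_{c_{0}}(x):=\exp(-c_{0}x)/f_{\zeta}(x)$ satisfies $\mathbb{P}V_{c_{0}}(x)-V_{c_{0}}(x)\leq -\beta V_{c_{0}}(x)+b\mathbbm{1}_{C}(x)$ for some $\beta \in (0,1)$, $b<\infty$, and a small (hence petite) set $C$.

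I would then invoke the standard consequence of this drift inequality, together with the positive Harris recurrence of $(Z_{n})$ established in Theorem~\ref{Theorem1}: namely, $\int V_{c_{0}}\, d\pi_{\mathrm{stat}} \leq b\,\pi_{\mathrm{stat}}(C)/\beta<\infty$. This is (for instance) the content of Theorem~14.3.7 of Meyn and Tweedie~\cite{Meyn/Tweedie}; heuristically, one integrates the drift inequality against the invariant measure $\pi_{\mathrm{stat}}$ and uses invariance ($\int \mathbb{P}V_{c_{0}}\,d\pi_{\mathrm{stat}}=\int V_{c_{0}}\,d\pi_{\mathrm{stat}}$), after a routine truncation $V_{c_{0}}\wedge N$ to avoid the $\infty-\infty$ ambiguity.

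To pass from this particular $c_{0}$ to an arbitrary $c>0$, I would distinguish two cases. If $c\geq c_{0}$, then $\exp(-cx)/f_{\zeta}(x)\leq \exp(-c_{0}x)/f_{\zeta}(x)=V_{c_{0}}(x)$ for every $x>0$, and the conclusion follows by simple comparison. If instead $c<c_{0}$, then $c$ itself lies in $(0,1/2)$ and still satisfies $\exp(cx)f_{\zeta}(x)\to 0$ at infinity, so Lemma~\ref{lemFosterLyap} applies verbatim with $c$ in place of $c_{0}$, yielding $\int V_{c}\,d\pi_{\mathrm{stat}}<\infty$ by exactly the same drift-to-invariant-measure argument.

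I do not anticipate any substantive obstacle: the genuine work has already been carried out in establishing the drift condition (Lemma~\ref{lemFosterLyap}) and the positive Harris recurrence of the chain (Theorem~\ref{Theorem1}); this lemma is essentially just a packaging of their standard consequence for $\pi_{\mathrm{stat}}$-integrability, combined with the elementary monotonicity $c\mapsto \exp(-cx)/f_{\zeta}(x)$ to cover all positive $c$.
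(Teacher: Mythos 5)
Your proof is correct and follows essentially the same route as the paper: the paper likewise reduces to small $c$ by monotonicity of $c\mapsto \exp(-cx)/f_{\zeta}(x)$ and then applies the drift inequality of Lemma~\ref{lemFosterLyap} together with the standard Meyn--Tweedie consequence (the paper cites Theorem 14.0.1 rather than 14.3.7, but the content is the same) to conclude $\int V\,\mathrm{d}\pi_{\mathrm{stat}}<\infty$.
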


\begin{pf}
It suffices to prove the result for small values of $c>0$.
As in the proof of Lemma~\ref{lemFosterLyap}, let $V(x)=\exp
(-cx)/f_{\zeta}(x), x>0$, with $c \in(0,1/2)$ small enough so that
$\exp(cx)f_{\zeta}(x)\rightarrow0$ as $x \rightarrow\infty$.
Then, as a direct consequence of (\ref{eqdrift}) and
Theorem 14.0.1 of \cite{MeynTweedie}, we have that $\int_{0}^{\infty
}V(x) \pi_{\mathrm{stat}}(\mathrm dx)<\infty$. The result follows.
\end{pf}

\begin{lem}
\label{logZfiniteexpect}
If $\int_{\mathcal S_1} s_1^{-1} \nu(\mathrm d \mathbf s)<\infty$,
then for $a>0$ sufficiently small and all $b<1+1/\llvert \alpha
\rrvert $,
\[
\int_1^{\infty} \exp(ax) \pi_{\mathrm{stat}}(\mathrm
d x) < \infty\quad\mbox{and}\quad\int_0^{1}
x^{-b} \pi_{\mathrm{stat}}(\mathrm d x) < \infty.
\]
In particular, for all $p > 0$,
\[
\mathbb{E} \bigl[\bigl\llvert\log\bigl(Z_0^{\mathrm{stat}}\bigr)
\bigr\rrvert^p \bigr] < \infty. %
\]
\end{lem}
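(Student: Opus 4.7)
The plan is to treat the two tail-integrability claims separately; the ``in particular'' clause for $\mathbb E[|\log Z_0^{\mathrm{stat}}|^{p}]$ will follow routinely once these are established, because $|\log x|^{p}$ is dominated by $x^{-\delta}$ on $(0,1)$ and by $e^{ax}$ on $(1,\infty)$ for arbitrarily small $\delta,a>0$. For the exponential moment $\int_{1}^{\infty}e^{ax}\pi_{\mathrm{stat}}(\mathrm dx)<\infty$, I would invoke Lemma \ref{lemqualitatif}, which yields $V(x):=e^{-cx}/f_{\zeta}(x)\in L^{1}(\pi_{\mathrm{stat}})$ for every $c>0$. By Lemma \ref{LemmaDensity}(ii) there is some $c'>0$ with $f_{\zeta}(x)=o(e^{-c'x})$ as $x\to\infty$; fixing $c\in(0,c')$ then gives $V(x)\ge e^{(c'-c)x}$ for all sufficiently large $x$, so that $\int_{1}^{\infty}e^{(c'-c)x}\pi_{\mathrm{stat}}(\mathrm dx)<\infty$, as desired.

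For the polynomial moment $\int_{0}^{1}x^{-b}\pi_{\mathrm{stat}}(\mathrm dx)<\infty$ with $b<1+1/|\alpha|$, I would write the fixed-point equation (\ref{Eqstat}) as $\pi_{\mathrm{stat}}(x)=f_{\zeta}(x)G(x)$, where
\[
G(x)=\int_{\mathcal S_{1}}\sum_{i:s_{i}>0}e^{s_{i}^{-\alpha}x}\prod_{j\neq i}F_{\zeta}(s_{j}^{\alpha}s_{i}^{-\alpha}x)\,J(s_{i}^{-\alpha}x)\,\nu(\mathrm d\mathbf s),\qquad J(u)=\int_{u}^{\infty}\frac{e^{-y}\pi_{\mathrm{stat}}(y)}{f_{\zeta}(y)}\,\mathrm dy,
\]
and prove that $G$ is bounded on $(0,1]$. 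Since $J\le J(0)<\infty$ by Lemma \ref{lemqualitatif}, it suffices to bound the inner sum uniformly in $\mathbf s$. For $x\le 1$ one has $s_{i}^{-\alpha}=s_{i}^{|\alpha|}\le 1$, so $e^{s_{i}^{-\alpha}x}\le e$; moreover, the ordering $s_{1}\ge s_{2}\ge\dots$ together with $\alpha<0$ gives $s_{j}^{\alpha}s_{i}^{-\alpha}=(s_{j}/s_{i})^{\alpha}\le 1$ whenever $j<i$, so $F_{\zeta}(s_{j}^{\alpha}s_{i}^{-\alpha}x)\le F_{\zeta}(x)$. Keeping only the factors with $j<i$ in the product, the inner sum is bounded by the geometric series
\[
\sum_{i\ge 1}e\cdot F_{\zeta}(x)^{i-1}=\frac{e}{1-F_{\zeta}(x)},
\]
which is uniform in $\mathbf s$ and uniformly bounded on $(0,1]$ since $F_{\zeta}(1)<1$. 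Integrating against $\nu$ gives $\pi_{\mathrm{stat}}(x)\le C f_{\zeta}(x)$ on $(0,1]$. Finally, the hypothesis $\int_{\mathcal S_{1}}s_{1}^{-1}\nu(\mathrm d\mathbf s)<\infty$ permits Lemma \ref{LemmaDensity}(iii) with $\beta=1$, giving $F_{\zeta}(x)=\mathcal O(x^{1+1/|\alpha|})$ as $x\to 0$; a standard integration by parts then yields $\int_{0}^{1}x^{-b}f_{\zeta}(x)\,\mathrm dx<\infty$ for every $b<1+1/|\alpha|$, completing the proof of the second integrability claim.

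The main point requiring care is the uniform-in-$\mathbf s$ bound on the inner sum appearing in $G(x)$: a priori the sum could contain infinitely many terms, and for small $s_{i}$ the factor $e^{s_{i}^{-\alpha}x}$ together with the indexing structure could make the argument delicate. The crux is the innocuous but essential observation that the decreasing ordering of $\mathbf s$ combined with $\alpha<0$ forces $s_{j}^{\alpha}s_{i}^{-\alpha}\le 1$ for all $j<i$; this converts the potentially wild sum into a geometric series whose ratio depends only on $x$, and allows us to absorb the integration against $\nu$ at no cost.
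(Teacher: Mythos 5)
Your proof is correct and follows essentially the same route as the paper's: Lemma~\ref{lemqualitatif} together with the exponential decay of $f_{\zeta}$ from Lemma~\ref{LemmaDensity}(ii) for the exponential moment, and the fixed-point equation (\ref{Eqstat}) to get $\pi_{\mathrm{stat}}(x)\le C f_{\zeta}(x)$ on $(0,1]$ followed by Lemma~\ref{LemmaDensity}(iii) with $\beta=1$ for the moment near $0$. The only local difference is how you control the sum over $i$: you use the decreasing ordering of $\mathbf s$ to dominate it by the geometric series $e\sum_{i}F_{\zeta}(x)^{i-1}$, whereas the paper recognizes $\sum_{i}(1-F_{\zeta}(x))\prod_{j\neq i}F_{\zeta}(s_{j}^{\alpha}s_{i}^{-\alpha}x)$, integrated against $\nu$, as at most $\mathbb P(\zeta^{(I)}>x)\le 1$ — both yield the same $e^{x}/(1-F_{\zeta}(x))$ bound.
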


\begin{pf}
To see the first assertion, note that by Lemma~\ref{LemmaDensity},
there exist constants $C_1> 0$ and $c > 0$ such that
\[
f_{\zeta}(x) \le C_1 \exp(-cx)
\]
for all $x > 0$. Hence, for all $a<c$, by Lemma~\ref{lemqualitatif},
\[
\int_0^{\infty} \exp(ax) \pi_{\mathrm{stat}}(\mathrm
d x) \le C_1 \int_0^{\infty}
\frac{\exp(-(c-a)x)}{f_{\zeta}(x)} \pi_{\mathrm
{stat}}(\mathrm d x) < \infty.
\]
Next, from (\ref{Eqstat}), we have that
\[
\frac{\pi_{\mathrm{stat}}(x)}{f_{\zeta}(x)} = \int_{{\mathcal
S}_1} \Biggl(\sum
_{i=1}^{\infty}e^{s_{i}^{-\alpha}x}\prod
_{j\neq
i}\mathbb{F}_{\zeta}\bigl(s_{j}^{\alpha}s_{i}^{-\alpha}x
\bigr) \biggl(\int_{s_{i}^{-\alpha
}x}^{\infty}\frac{e^{-y}\pi_{\mathrm{stat}}(y)}{f_{\zeta}(y)}\,
\mathrm d y \biggr) \Biggr)\nu(\mathrm d \mathbf s).
\]
Recall\vspace*{-2pt} the definition of $\zeta^{(I)}$ from just below equation (\ref
{eqzeta}). Since $\int_0^{\infty} \frac{\exp(-x)}{f_{\zeta}(x)}\*
\pi_{\mathrm{stat}}(\mathrm d x) < \infty$ and $e^{s_i^{-\alpha}x}
\le e^x$, there exists a constant $C$ such that
\begin{eqnarray*}
\frac{\pi_{\mathrm{stat}}(x)}{f_{\zeta}(x)} & \le&\frac{Ce^x}{1 -
\mathbb{F}_{\zeta}(x)} \int_{{\mathcal
S}_1}
\Biggl(\sum_{i=1}^{\infty}\bigl(1 -
\mathbb{F}_{\zeta}(x)\bigr) \prod_{j\neq i}
\mathbb{F}_{\zeta}\bigl(s_{j}^{\alpha}s_{i}^{-\alpha}x
\bigr) \Biggr)\nu(\mathrm d \mathbf s)
\\
& \le&\frac{Ce^x}{1 - \mathbb{F}_{\zeta}(x)} \mathbb P \bigl(\zeta
^{(I)} > x \bigr) \le
\frac{Ce^x}{1 - \mathbb{F}_{\zeta}(x)}.
\end{eqnarray*}
Then, for $x \in(0,1]$, $\pi_{\mathrm{stat}}(x)/f_{\zeta}(x)$ is
bounded by some constant $C_2$. It follows that
\[
\int_0^{1} x^{-b}
\pi_{\mathrm{stat}}(\mathrm{d} x) \le C_2 \int_0^{1}
x^{-b} f_{\zeta}(x)\, \mathrm{d} x,
\]
and this upper bound is finite by Lemma~\ref{LemmaDensity}(iii) when
$b<1+1/\llvert \alpha\rrvert $.
\end{pf}

\begin{lem}\label{mu} If $\int_{\Sfl}s_1^{-1} \nu(\mathrm d \mathbf
s)<\infty$, then $\mathbb E [(\log({Y}^{\mathrm
{stat}}_1))^p ]<\infty$ for all $p>0$.
\end{lem}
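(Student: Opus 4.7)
The plan is to exploit the identity $\log Y_1 = -\log \Theta_1 + \frac{1}{|\alpha|}(\log Z_0 - \log Z_1)$, which follows directly from $Y_1 = \Theta_1^{-1}(Z_1/Z_0)^{1/\alpha}$. Combined with the elementary bound $(\log Y_1^{\mathrm{stat}})^p \leq C_p\bigl((-\log \Theta_1^{\mathrm{stat}})^p + |\log Z_0^{\mathrm{stat}}|^p + |\log Z_1^{\mathrm{stat}}|^p\bigr)$ and Lemma~\ref{logZfiniteexpect} (which, by stationarity, controls the logarithmic moments of both $Z_0^{\mathrm{stat}}$ and $Z_1^{\mathrm{stat}}$), the problem reduces to showing that $\E{(-\log \Theta_1^{\mathrm{stat}})^p} < \infty$ for every $p > 0$.

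Starting from the joint density of $(Z_0, \Theta_1, Z_1)$ encoded in Proposition~\ref{prop:Markovprop}, I would integrate out $Z_0$ against $\pi_{\mathrm{stat}}$ via Fubini to introduce the tail function $G(u) := \int_u^\infty \pi_{\mathrm{stat}}(x) e^{-x}/f_\zeta(x)\,\d x$, and after substituting $u = s_i^{-\alpha} z$ obtain
\begin{equation*}
\E{(-\log \Theta_1^{\mathrm{stat}})^p} = \int_{\Sfl} \sum_i (-\log s_i)^p\, s_i^\alpha \int_0^\infty f_\zeta(s_i^\alpha u)\, e^u \prod_{j \neq i} F_\zeta(s_j^\alpha u)\, G(u)\,\d u\, \nu(\d \mathbf s).
\end{equation*}
By Lemma~\ref{lemqualitatif}, $G(u) \leq C_\delta e^{-(1-\delta)u}$ for any $\delta \in (0,1)$, which absorbs the factor $e^u$ and leaves $e^{\delta u}$. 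Recognising $s_i^\alpha f_\zeta(s_i^\alpha u)\prod_{j \neq i} F_\zeta(s_j^\alpha u)$ as the joint density in $u$ of $\xi := \max_j s_j^{-\alpha}\zeta^{(j)}$ restricted to $\{I = i\}$ conditional on $\mathbf s$, the right-hand side rewrites as $C_\delta\, \E{(-\log \Theta_1)^p\, e^{\delta \xi}}$ under the \emph{unconditioned} law of the fragmentation started from $\mathbf 1$. A Cauchy--Schwarz split bounds this by $C_\delta\, \E{e^{2\delta \xi}}^{1/2}\, \E{(-\log \Theta_1)^{2p}}^{1/2}$, where the exponential factor is finite for $\delta$ small since $\xi \leq \zeta$ (from $\zeta = T_1 + \xi$) and $\zeta$ has exponential moments (Proposition~14 of \cite{HaasLossMass}).

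It remains to bound $\E{(-\log \Theta_1)^{2p}}$ under the original law. For this, I decompose $-\log \Theta_1 = -\log s_1 + \log(s_1/s_I)$, both terms nonnegative since $s_I \leq s_1$. The $(-\log s_1)^{2p}$ contribution is bounded by $C_\eta \int s_1^{-\eta}\,\nu(\d \mathbf s)$ for any $\eta \in (0,1]$, which is finite by the hypothesis $\int s_1^{-1}\nu<\infty$. The crossed contribution on $\{I \neq 1\}$ is handled via the Markov estimate
\[
\Prob{I = i \mid \mathbf s} \leq \Prob{\zeta^{(i)} > (s_1/s_i)^{|\alpha|}\zeta^{(1)} \mid \mathbf s} \leq C_q\, (s_i/s_1)^{q|\alpha|}, \quad i \geq 2,
\]
valid for any $q < 1 + 1/|\alpha|$ (the $-q$-th moment of $\zeta$ being finite by Lemma~\ref{LemmaDensity}~(iii)), combined with the subadditivity bound $\sum_{i \geq 2}(s_i/s_1)^{q|\alpha|-\eta} \leq \sum_{i \geq 2} s_i/s_1 \leq s_1^{-1}$ valid when $q|\alpha|-\eta \geq 1$. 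Choosing $\eta \in (0,|\alpha|)$ small and $q = (1+\eta)/|\alpha|$ satisfies both constraints simultaneously and reduces everything to $\int s_1^{-1}\nu(\d\mathbf s)<\infty$. The delicate point, which I expect to be the main obstacle, is precisely this balancing of the three exponents $p$, $q$ and $\eta$: the tail bound on $\Prob{I = i \mid \mathbf s}$ must provide just enough decay in $s_i/s_1$ for the sum over $i$ to converge while the remaining $\nu$-integrability collapses exactly to the minimal hypothesis, avoiding any stronger inverse-moment assumption on $s_1$.
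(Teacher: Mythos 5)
Your proof is correct, but it takes a genuinely different route from the paper's. The paper writes $\log(Y_1^{\mathrm{stat}})=\frac{1}{|\alpha|}\big(\log\zeta-\log(\zeta-T_1)\big)$ and controls the dangerous term $|\log(\zeta-T_1)|^p\I{\zeta-T_1\le 1}$ purely through the lower bound $\zeta-T_1=\xi\ge F_1(T_1)^{-\alpha}\zeta^{(1)}$, so that only the \emph{largest} fragment $s_1$ and a single independent copy of $\zeta$ ever enter; no information about which fragment achieves the argmax is needed. You instead split off $-\log\Theta_1=-\log s_I$, which forces you to quantify how unlikely it is that the last fragment is a small one, via the Markov estimate $\Prob{I=i\mid\mathbf s}\le C_q(s_i/s_1)^{q|\alpha|}$ for $q<1+1/|\alpha|$ and the exponent balancing $\eta<|\alpha|$, $q=(1+\eta)/|\alpha|$ — I checked this balancing and it does close under exactly $\int_{\Sfl}s_1^{-1}\nu(\d\mathbf s)<\infty$. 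Both arguments use Lemma~\ref{lemqualitatif} in the same structural role (transferring the $\pi_{\mathrm{stat}}$-weighting back to the unconditioned law: the paper through the explicit joint density of $(\xi,\zeta)$, you through the tail function $G$ and the identification of $s_i^{\alpha}f_\zeta(s_i^{\alpha}u)\prod_{j\ne i}F_\zeta(s_j^{\alpha}u)$ as the density of $(\xi,I)$), and both use Lemma~\ref{logZfiniteexpect} for the $\log Z$ contributions. Your detour is longer, but it buys the additional conclusion $\E{(-\log\Theta_1^{\mathrm{stat}})^p}<\infty$ for all $p>0$ — polynomial moments of the logarithmic jump sizes of the spine in the stationary regime — which the paper's argument does not directly provide. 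One cosmetic remark: the phrase ``after substituting $u=s_i^{-\alpha}z$'' does not quite match the displayed formula (which is already written in the variable $u=x-t=\xi$), but the formula itself is the correct one.
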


\begin{pf} Fix $p>0$.
By definition,
\begin{eqnarray*}
&& \mathbb E \bigl[\bigl(\log\bigl({Y}^{\mathrm{stat}}_1\bigr)
\bigr)^p \bigr]
\\
&&\qquad = \frac
{1}{\llvert \alpha\rrvert ^p}\int_{0}^{\infty}
\mathbb E \biggl[ \biggl(\log\biggl( \frac{\zeta}{\zeta-T_1} \biggr)
\biggr)^p \Big|\zeta=x \biggr] \pi_{\mathrm{stat}} (\mathrm dx)
\\
&&\qquad  \leq\frac{C_p}{\llvert \alpha\rrvert ^p} \int_{0}^{\infty} \bigl(
\bigl\llvert\log(x)\bigr\rrvert^p + \mathbb E \bigl[ \bigl\llvert
\log(\zeta-T_1)\bigr\rrvert^p \mathbh1_{\{\zeta
-T_1 \leq1\}}
\mid\zeta=x \bigr] \bigr)\pi_{\mathrm{stat}} (\mathrm dx),
\end{eqnarray*}
for some constant $C_p$. By Lemma~\ref{logZfiniteexpect},
$
\int_{0}^{\infty} \llvert \log(x)\rrvert ^p \pi_{\mathrm{stat}}
(\mathrm dx)
<\infty$.
Next, using the notation introduced in Lemma~\ref{LemmaDensity}, we
write $\zeta=T_1+\xi$ where $\xi=\max_{i\geq1}\{F_i^{-\alpha
}(T_1)\zeta^{(i)}\}$. Since $T_1$ is independent of $\xi$ and is
exponentially distributed with mean 1, the joint distribution of $(\xi
,\zeta)$ is $\exp(-x+y)\times   \mathbh1_{0 \leq y \leq x} f_{\xi
}(y)\,\mathrm dy \,\mathrm dx$,
where we recall that $f_{\xi}$ denotes the density of $\xi$. Hence
\begin{eqnarray*}
&& \int_{0}^{\infty} \mathbb E \bigl[ \bigl\llvert
\log(\zeta-T_1)\bigr\rrvert^p \mathbh1_{\{ \zeta-T_1 \leq1\}}
\mid\zeta=x \bigr]\pi_{\mathrm{stat}} (\mathrm dx)
\\
&&\qquad  = \int_0^{\infty} \frac{\exp(-x)}{f_{\zeta}(x)} \biggl(\int
_0^{\min(x,1)} \exp(y) \llvert\log y\rrvert
^p f_{\xi}(y)\,\mathrm dy \biggr) \pi_{\mathrm{stat}} (
\mathrm dx)
\\
&&\qquad \leq e\int_0^{1} \llvert\log y\rrvert
^p f_{\xi}(y)\,\mathrm dy \int_0^{\infty}
\frac{\exp(-x)}{f_{\zeta}(x)} \pi_{\mathrm{stat}} (\mathrm dx).
\end{eqnarray*}
The integral $\int_0^{\infty} \exp(-x)/f_{\zeta}(x) \pi_{\mathrm
{stat}} (\mathrm dx)$ is finite, by Lemma~\ref{lemqualitatif}.
Finally, note that $\xi\geq F_1(T_1)^{-\alpha} \zeta^{(1)}$ and so
\[
\mathbb E \bigl[ \bigl\llvert\log(\xi)\bigr\rrvert^p
\mathbh1_{\{ \xi\leq
1\}} \bigr] \leq C_p \bigl(\llvert\alpha\rrvert
^p\mathbb E \bigl[ \bigl\llvert\log\bigl(F_1(T_1)
\bigr) \bigr\rrvert^p \bigr] + \mathbb E \bigl[\bigl\llvert\log
\bigl(\zeta^{(1)} \bigr) \bigr\rrvert^p \bigr] \bigr).
\]
The first expectation on the right-hand side is equal to $\int
_{\mathcal S_1} \llvert \log(s_1) \rrvert ^p \nu(\mathrm d \mathbf s
)$ and is finite since $\int_{\mathcal S_1} s_1^{-1} \nu(\mathrm d
\mathbf{s}) < \infty$. The\vspace*{2pt} second expectation is also finite, by
assertions (i) and (ii) of Lemma~\ref{LemmaDensity}.
\end{pf}

The following result is the only place that we need the extra condition
$\int_{\mathcal S_1} s_1^{-1-\rho}\nu(\mathrm d \mathbf s)<\infty$
for some $\rho>0$.

\begin{lem}
\label{lemLLN2}
Assume\vspace*{2pt} that $\int_{\mathcal S_1} s_1^{-1-\rho}\nu(\mathrm d \mathbf
s)<\infty$ for some $\rho>0$. Then there exits $\delta_{\rho}>0$
such that for all $\delta\in[0,\delta_{\rho})$,
\[
\mathbb E \bigl[ \bigl\llvert\log\bigl(\mathbb{F}_{\zeta}
\bigl(Z_0^{\mathrm
{stat}} \bigl(Y_1^{\mathrm{stat}}
\bigr)^{\alpha}\bigr) \bigr) \bigr\rrvert^{1+\delta
} \bigr]<\infty.
\]
\end{lem}

\begin{pf}
Again we let $\xi=\sup_{i\geq1}\{F_i(T_1)^{-\alpha}\zeta^{(i)}\}$.
The first step of our proof is to show that, for $0<x \leq1$,
%
\begin{equation}
\label{eqtech1} \bigl\llvert\log\bigl(\mathbb{F}_{\zeta}(x)\bigr)\bigr
\rrvert\leq C \bigl(x^{1/\alpha} \llvert\log x\rrvert+ x^{1/\alpha}
\bigl\llvert\log\bigl(\mathbb{F}_{\xi}(x)\bigr)\bigr\rrvert\bigr).
\end{equation}
For $x>0$, let $K(x)=\sup\{k \geq1\dvtx
F_k(T_1)>x^{-1/\alpha}\}$, and let $C_1>1$ be
such that $1-t \geq\exp(-C_1 t)$ for all $t \in[0,\mathbb P(\zeta
>1))$. Then
\begin{eqnarray*}
\prod_{i\geq K(x)+1}\mathbb{F}_{\zeta}\bigl(x
F_i(T_1)^{\alpha}\bigr) &\geq&\exp
\biggl(-C_1 \sum_{i\geq K(x)+1} \mathbb P\bigl(
\zeta>xF_i(T_1)^{\alpha}\bigr) \biggr)
\\
&\geq&\exp\biggl(-C_1 \mathbb E\bigl[\zeta^{-1/\alpha}\bigr]
x^{1/\alpha}\sum_{i\geq K(x)+1}F_i(T_1)
\biggr)
\\
&\geq&\exp\bigl(-C_2 x^{1/\alpha} \bigr),
\end{eqnarray*}
where we have used Markov's inequality to get the second inequality and
the fact that $\sum_{i\geq K(x)+1}F_i(T_1) \leq1$ to get\vspace*{2pt}
the third. Now note that $K(x) \le x^{1/\alpha}$ since $F_k(T_1) \le
1/k$ for all $k \ge1$. So, for $c \in(0,1)$ such that $\nu(s_1\leq c)>0$,
%
\begin{eqnarray}\label{xiineq}
&& \mathbb{F}_{\xi}(x)=\mathbb E \biggl[ \prod
_{i\geq1}\mathbb{F}_{\zeta}\bigl(x F_i(T_1)^{\alpha}
\bigr) \biggr]\nonumber
\\
&&\qquad \geq \mathbb{E} \Biggl[\prod_{i=1}^{K(x)}
\mathbb{F}_{\zeta}\bigl(x F_1(T_i)^{\alpha}
\bigr) \Biggr] \exp\bigl(-C_2 x^{1/\alpha} \bigr)
\nonumber\\[-8pt]\\[-8pt]\nonumber
&&\qquad \geq \mathbb E \bigl[ \mathbb{F}_{\zeta}\bigl(x c^{\alpha
}\bigr)^{K(x)}\mathbh1_{\{F_1(T_1) \leq c\}} \bigr]\exp\bigl(-C_2
x^{1/\alpha} \bigr)
\nonumber
\\
&&\qquad \geq \nu(s_1 \leq c) \mathbb{F}_{\zeta}\bigl(xc^{\alpha}
\bigr)^{x^{1/\alpha
}}\exp\bigl(-C_2 x^{1/\alpha} \bigr).\nonumber
\end{eqnarray}
Next, since $\mathbb{F}_{\zeta}(x)=\exp(-x)\int_0^x \exp(y)\mathbb
{F}_{\xi}(y)\,\mathrm dy$, we have that for all $0<x \leq1$,
%
\begin{equation}
\label{zetaineq} \mathbb{F}_{\zeta}(x) \geq\exp(-1) \bigl(1-c^{-\alpha/2}
\bigr) x \mathbb{F}_{\xi}\bigl(c^{-\alpha/2} x\bigr).
\end{equation}
Using (\ref{xiineq}), we get
\[
\mathbb{F}_{\zeta}(x) \geq C_3 x \mathbb{F}_{\zeta}
\bigl(c^{-\alpha/2} x c^{\alpha}\bigr)^{c^{-1/2} x^{1/\alpha}} \exp
\bigl(-C_2 c^{-1/2} x^{1/\alpha}\bigr),
\]
and another application of (\ref{zetaineq}) yields
\[
\mathbb{F}_{\zeta}(x) \geq C_3 x \bigl(C_4 x
\mathbb{F}_{\xi
}\bigl(c^{-\alpha} x c^{\alpha}\bigr)
\bigr)^{c^{-1/2} x^{1/\alpha}} \exp\bigl(-C_2 c^{-1/2}
x^{1/\alpha}\bigr).
\]
All of the constants here are strictly positive, and so (\ref
{eqtech1}) follows.

From (\ref{eqtech1}) and H\"older's inequality, we deduce that for
all $\delta>0$,
\begin{eqnarray*}
&& \mathbb E \bigl[ \bigl\llvert\log\bigl(\mathbb{F}_{\zeta}(\xi)\bigr)
\mathbh1_{\{
\xi\leq1\}}\bigr\rrvert^{1+\delta} \bigr]
\\
&&\qquad  \leq C'
\bigl(\mathbb E \bigl[ \xi^{{(1+\delta)^2}/{\alpha}} \bigr]
+\mathbb E \bigl[ \xi
^{{(1+\delta)^2}/{\alpha}} \bigr]^{1/{(1+\delta)}} \mathbb E
\bigl[\bigl\llvert\log\bigl(
\mathbb{F}_{\xi}(\xi)\bigr)\bigr\rrvert^{(1+\delta )^2/{\delta}} \bigr]
^{\delta/{(1+\delta)}} \bigr).
\end{eqnarray*}
Since $\mathbb{F}_{\xi}(\xi)$ has a uniform distribution, $\llvert \log
(\mathbb{F}_{\xi}(\xi))\rrvert \sim\operatorname{Exp}(1)$ and so has finite
positive moments of all orders. Moreover, since $\xi\geq
F_1(T_1)^{-\alpha} \zeta^{(1)}$, we have
\[
\mathbb E \bigl[ \xi^{{(1+\delta)^2}/{\alpha}} \bigr] \leq\mathbb
E \bigl[
\zeta^{{(1+\delta)^2}/{\alpha}} \bigr] \int_{\mathcal S_1}
s_1^{-(1+\delta)^2}
\nu(\mathrm d \mathbf s). %
\]
Let $\rho>0$ be such that $\int_{\mathcal S_1} s_1^{-1-\rho}\nu
(\mathrm d \mathbf s)<\infty$. By Lemma~\ref{LemmaDensity}(iii),
$\mathbb E[\zeta^{-a}]<\infty$ for all $a<1+(1+\rho)/\llvert \alpha
\rrvert $. So
for all $\delta\geq0$ such that $(1+\delta)^2 \leq1+\rho$, the
expectation $\mathbb E [ \xi^{(1+\delta)^2/\alpha} ]$ is
finite and thus
\[
\mathbb E \bigl[ \bigl\llvert\log\bigl(\mathbb{F}_{\zeta}(\xi)\bigr)
\bigr\rrvert^{1+\delta} \bigr]<\infty%
\]
[since $\llvert \log(\mathbb{F}_{\zeta}(\xi)) \rrvert \leq\llvert
\log(\mathbb{F}_{\zeta}(1)) \rrvert $ when $\xi\geq1$].

In particular, we can deduce that $\mathbb E [ | \log(\mathbb
{F}_{\zeta}(\xi)) | ^{1+\delta} | \zeta=x_0 ]<\infty
$ for some $x_0>0$.
Our goal now is to check that
\[
\mathbb E \bigl[ \bigl\llvert\log\bigl(\mathbb{F}_{\zeta}
\bigl(Z_0^{\mathrm
{stat}} \bigl(Y_1^{\mathrm{stat}}
\bigr)^{\alpha}\bigr)\bigr) \bigr\rrvert^{1+\delta}\bigr]<\infty.
\]
Recall that $\xi=\zeta-T_1$ and so $Z_0 Y_1^{\alpha} = Z_1 \Theta
_1^{-\alpha} = \xi$. Hence,
\[
\mathbb E \bigl[ \bigl\llvert\log\bigl(\mathbb{F}_{\zeta}
\bigl(Z_0^{\mathrm
{stat}} \bigl(Y_1^{\mathrm{stat}}
\bigr)^{\alpha}\bigr) \bigr) \bigr\rrvert^{1+\delta
} \bigr]=\int
_0^{\infty} \mathbb E \bigl[ \bigl\llvert\log\bigl(
\mathbb{F}_{\zeta} (\xi) \bigr) \bigr\rrvert^{1+\delta} \mid\zeta
=x \bigr] \pi_{\mathrm{stat}}(\mathrm d x).
\]
Write $\int_0^{\infty}=\int_0^{x_0}+\int_{x_0}^{\infty}$, where
$x_0$ is chosen so that $\mathbb E [ \llvert \log(\mathbb
{F}_{\zeta}(\xi)) \rrvert ^{1+\delta} \mid\zeta=x_0 ]<\infty
$. As seen in the proof of Lemma~\ref{logZfiniteexpect}, $\pi
_{\mathrm{stat}}(x) \leq C_{x_0} f_{\zeta} (x)$ on $(0,x_0)$. Hence,
\[
\int_0^{x_0} \mathbb E \bigl[ \bigl\llvert\log
\bigl(\mathbb{F}_{\zeta}(\xi)\bigr) \bigr\rrvert^{1+\delta} \mid
\zeta=x \bigr] \pi_{\mathrm
{stat}}(\mathrm d x) \leq C_{x_0} \mathbb E
\bigl[ \bigl\llvert\log\bigl(\mathbb{F}_{\zeta}(\xi)\bigr) \bigr
\rrvert
^{1+\delta} \bigr]<\infty. %
\]
Next, for $x >x_0$, we use the fact that the joint distribution of
$(\xi,\zeta)$ is $\exp(-z+y) \mathbh1_{0 \leq y \leq z} f_{\xi
}(y)\,\mathrm dy \,\mathrm dz$,
to obtain that
\begin{eqnarray*}
&& \mathbb E \bigl[ \bigl\llvert\log\bigl(\mathbb{F}_{\zeta}(\xi)\bigr)
\bigr\rrvert^{1+\delta} \mid\zeta=x \bigr]
\\
&&\qquad = \frac{e^{-x}}{f_{\zeta
}(x)} \int
_0^x e^{y} \bigl\llvert\log\bigl(
\mathbb{F}_{\zeta}(y)\bigr) \bigr\rrvert^{1+\delta}
f_{\xi}(y)\,\mathrm dy
\\
&&\qquad \leq \frac{e^{-x}}{f_{\zeta}(x)} \int_0^{x_0}
e^{y} \bigl\llvert\log\bigl(\mathbb{F}_{\zeta}(y)\bigr)
\bigr\rrvert^{1+\delta} f_{\xi}(y)\,\mathrm dy
\\
&&\quad\qquad{}+ \bigl\llvert\log
\bigl(\mathbb{F}_{\zeta}(x_0)\bigr) \bigr\rrvert
^{1+\delta} \frac
{e^{-x}}{f_{\zeta}(x)} \int_{x_0}^x
e^{y} f_{\xi}(y)\,\mathrm dy
\\
&&\qquad \leq \frac{e^{-x}f_{\zeta}(x_0)}{f_{\zeta}(x) e^{-x_0}} \mathbb E \bigl
[ \bigl\llvert\log\bigl(
\mathbb{F}_{\zeta}(\xi)\bigr) \bigr\rrvert^{1+\delta} \mid
\zeta=x_0 \bigr] + \bigl\llvert\log\bigl(\mathbb{F}_{\zeta}(x_0)
\bigr) \bigr\rrvert^{1+\delta}.
\end{eqnarray*}
The integral of this upper bound with respect to $\pi_{\mathrm
{stat}}(\mathrm d x)$ on $(x_0,\infty)$ is finite, by Lemma~\ref
{lemqualitatif}.
\end{pf}

We now prove some almost sure limits for the biased chain.

\begin{lem} \label{lemLLN}
As $n \to\infty$, the following limits hold almost surely:
\begin{eqnarray*}
\frac{1}{n} \sum _{j=1}^n \log\bigl(Y_j^{\mathrm{bias}}
\bigr) &\to&\mu,\qquad\frac{1}{n} \sum_{j=-n+1}^{0}
\log\bigl(Y_j^{\mathrm{bias}}\bigr) \to\mu,
\\
\frac{1}{n} \log\bigl(Y_{-n}^{\mathrm{bias}}
\bigr) &\to& 0, \qquad\frac{1}{n} \log\bigl(Z_{-n}^{\mathrm{bias}}\bigr) \to0,
\\
\frac{1}{n} \log\bigl(\mathbb{F}_{\zeta}
\bigl(Z_{-n-1}^{\mathrm{bias}} \bigl(Y_{-n}^{\mathrm{bias}}
\bigr)^{\alpha}\bigr)\bigr) &\to&0.
\end{eqnarray*}
\end{lem}

\begin{pf}
Suppose that $(X_k)_{k \ge0}$ is any positive Harris chain possessing
an invariant distribution. Then Theorem 17.0.1 of Meyn and
Tweedie~\cite{MeynTweedie} gives the following law of large numbers:
for any function $g$ such that $\mathbb{E} [\llvert g(X_0^{\mathrm
{stat}})\rrvert ] < \infty$,
\[
\frac{1}{n} \sum_{j=1}^n
g(X_j) \to\mathbb{E} \bigl[g\bigl(X_0^{\mathrm
{stat}}
\bigr) \bigr]
\]
almost surely, as $n \to\infty$, irrespective of the distribution of
$X_0$. Moreover, it follows straightforwardly from\vspace*{2pt} this that
$
n^{-1} g(X_n) \to0
$
almost surely, as $n \to\infty$.

Now note that $(Z_k^{\mathrm{bias}}, Y_k^{\mathrm{bias}})_{k \ge1}$
is a realization of the Markov chain\break $(Z_k, Y_k)_{k \ge1}$ with
initial state $(Z_1,Y_1)$ having the distribution specified (for
suitable test functions $\phi$) by
\[
\mathbb{E} \bigl[\phi(Z_1, Y_1) \bigr] =
\frac{1}{\mu} \mathbb{E} \bigl[\log\bigl(Y_1^{\mathrm{stat}}\bigr)
\phi\bigl(Z_1^{\mathrm{stat}}, Y_1^{\mathrm{stat}}\bigr)
\bigr].
\]
Since $\mathbb{E} [\log(Y_1^{\mathrm{stat}}) ] = \mu<
\infty$, we get that a.s.
\[
\frac{1}{n} \sum_{j=1}^n \log
\bigl(Y_j^{\mathrm{bias}}\bigr) \to\mu.
\]
Observe next that $(Z_{-k}^{\mathrm{bias}}, Y_{-k}^{\mathrm
{bias}})_{k \ge0}$ is a realization of the (backward) Markov chain
$(Z_{-k}, Y_{-k})_{k \ge0}$ with initial distribution for $(Z_0, Y_0)$
specified (for suitable test functions $\phi$) by
\[
\mathbb{E} \bigl[\phi(Z_0,Y_0) \bigr] =
\frac{1}{\mu} \mathbb{E} \bigl[\log\bigl(Y_1^{\mathrm{stat}}\bigr)
\phi\bigl(Z_0^{\mathrm{stat}}, Y_0^{\mathrm{stat}}\bigr)
\bigr].
\]
The chain $(Z_{-k}, Y_{-k})_{k \ge0}$ is also a positive Harris chain
possessing the same invariant distribution as $(Z_k, Y_k)_{k \ge1}$. Hence,
\[
\frac{1}{n} \sum_{j=-n+1}^{0} \log
\bigl(Y_j^{\mathrm{bias}}\bigr) \to\mu\quad\mbox{and}\quad
\frac{1}{n} \log\bigl(Y_{-n}^{\mathrm{bias}}\bigr) \to0
\]
almost surely, as before. By Lemma~\ref{logZfiniteexpect}, $\mathbb
{E} [\llvert \log(Z_1^{\mathrm{stat}})\rrvert ] < \infty$
and, by the $\delta= 0$ case of Lemma~\ref{lemLLN2}, $\mathbb{E}
[\llvert \log(\mathbb{F}_{\zeta}(Z_0^{\mathrm{stat}} (Y_1^{\mathrm
{stat}})^{\alpha}))\rrvert ] < \infty$, and so we also have the
almost sure convergences
\[
\frac{1}{n} \log\bigl(Z_{-n}^{\mathrm{bias}}\bigr) \to0 \quad
\mbox{and}\quad\frac{1}{n} \bigl\llvert\log\bigl(\mathbb{F}_{\zeta}
\bigl(Z_{-n-1}^{\mathrm
{bias}}\bigl(Y_{-n}^{\mathrm{stat}}
\bigr)^{\alpha}\bigr)\bigr)\bigr\rrvert\to0.
\]
\upqed
\end{pf}

Finally, we show that $\mathbb{E} [\prod_{i=1}^n (Y^{\mathrm
{stat}}_i)^{\alpha} ]$ decays exponentially in $n$.

\begin{lem} \label{lemexptails}
For any $x > 0$, we have
\[
\limsup_{n \to\infty} \frac{1}{n} \log\mathbb{E} \Biggl[\prod
_{i=1}^n \bigl(Y^{\mathrm{stat}}_i
\bigr)^{\alpha} \Biggr] < 0.
\]
\end{lem}

In order to prove Lemma~\ref{lemexptails}, we use a renewal process
derived from the biased Markov chain $(Z_n^{\mathrm{bias}})_{n \in\Z
}$. We therefore begin with a result about general renewal processes.

Suppose that $(N(n))_{n \ge0}$ is a delayed renewal process. Write
$\tau_0$ for the delay and $\tau_1, \tau_2, \ldots$ for the
subsequent arrival times, so that $\tau_{k+1} - \tau_k$ are i.i.d.
random variables for $k \ge0$, independent of $\tau_0$, and $N(n) = \#
\{k \ge1\dvtx  \tau_k \le n\}$. We will say that a random variable $X$ has
\emph{exponential tails} if there exists $r > 1$ such that $\mathbb
{E} [r^X ] < \infty$.

\begin{lem} \label{lemrenewal}
Suppose that $\tau_0$ and $\tau_1 - \tau_0$ both have exponential
tails. Then for any $s \in(0,1)$,
\[
\limsup_{n \to\infty} \frac{1}{n} \log\mathbb{E}
\bigl[s^{N(n)} \bigr] < 0.
\]
\end{lem}

\begin{pf} The proof is elementary, and so we sketch it. Let $\chi=
\mathbb{E} [\tau_1 - \tau_0 ]$ be the mean of the
standard inter-arrival distribution and take $\varepsilon> 0$. Then
\begin{eqnarray*}
\mathbb{E} \bigl[s^{N(n)} \bigr] & \le&\mathbb{P} \bigl(N(n) < \bigl(
\chi^{-1} - \varepsilon\bigr)n \bigr) + s^{(\chi^{-1} - \varepsilon)n}
\\
& \le&\mathbb{P} (\tau_{k_n} \ge n ) + s^{(\chi^{-1} -
\varepsilon)n},
\end{eqnarray*}
where $k_n = \lfloor(\chi^{-1} - \varepsilon)n \rfloor$. But a simple
application of the G\"artner--Ellis theorem then implies that
\[
\mathbb{P} (\tau_{k_n} \ge n ) \le\mathbb{P} \bigl(
\tau_{k_n} \ge k_n \chi/(1 - \chi\varepsilon) \bigr)
\]
is exponentially small in $n$. The result follows.
\end{pf}

Suppose now that we mark the $k$th inter-arrival interval with some
probability which depends, in general, on its length $\tau_k - \tau
_{k-1}$, but independently for different inter-arrival intervals. Let
$I_k$ be the indicator that the $k$th inter-arrival interval is marked,
so that $I_1, I_2, \ldots$ are independent Bernoulli random variables
such that $I_k$ depends on $\tau_i, i \geq0$ only through $\tau_k -
\tau_{k-1}$. Let
%
\begin{equation}
\label{eqnmarks} M(n) = \#\{k \ge1\dvtx  \tau_k \le n, I_k = 1
\}.
\end{equation}
$(M(n))_{n \ge0}$ is again a delayed renewal process.

\begin{lem} \label{lemmarkedrenewal}
Suppose that $\tau_0$ and $\tau_1 - \tau_0$ have exponential tails
and that $q:= \mathbb{P} (I_1 = 1 ) > 0$. Then the delay
and inter-arrival distributions of $(M(n))_{n \ge0}$ have exponential
tails. Hence, for any $s \in(0,1)$,
\[
\limsup_{n \to\infty} \frac{1}{n} \log\mathbb{E}
\bigl[s^{M(n)} \bigr] < 0.
\]
\end{lem}

\begin{pf} The case $q = 1$ follows immediately from Lemma~\ref
{lemrenewal}, and so we henceforth assume that $q < 1$.
Let $\sigma_1, \sigma_2, \ldots, \tilde{\sigma}$ and $G$ be
mutually independent random variables, independent of $\tau_0$. Let
$\sigma_1, \sigma_2, \ldots$ have common \mbox{distribution} given by
$\mathbb{P} (\sigma_1 = i ) = \mathbb{P} (\tau_1
- \tau_0 = i \mid I_1 = 0 )$, $i \ge1$. Let $\tilde{\sigma}$
have distribution $\mathbb{P} (\tilde{\sigma} = i ) =
\mathbb{P} (\tau_1 - \tau_0 = i \mid I_1 = 1 )$, $i \ge
1$. Finally, let $G$ be such that $\mathbb{P} (G = i ) =
q (1 - q)^{i}$ for $i \ge0$. Then the delay has the same distribution as
\[
\tau_0 + \sum_{i=1}^G
\sigma_i + \tilde{\sigma}
\]
and the inter-arrival intervals have the same distribution as
\[
\sum_{i=1}^G \sigma_i +
\tilde{\sigma}.
\]
By Lemma~\ref{lemrenewal}, it will be sufficient to prove that $\sum
_{i=1}^G \sigma_i$ and $\tilde{\sigma}$ are random variables with
exponential tails. For $r \ge0$,
\[
\mathbb{E} \bigl[r^{\sigma_1} \bigr] = \mathbb{E} \bigl[r^{\tau
_1 - \tau_0}
\mid I_1 = 0 \bigr] \le\frac{\mathbb{E} [r^{\tau
_1 - \tau_0} ]}{1 -q}
\]
and, similarly,
\[
\mathbb{E} \bigl[r^{\tilde{\sigma}} \bigr] = \mathbb{E} \bigl[r^{\tau_1
- \tau_0}
\mid I_1 = 1 \bigr] \le\frac{\mathbb{E}
[r^{\tau_1 - \tau_0} ]}{q}.
\]
By assumption, there exists $r > 1$ such that $\mathbb{E}
[r^{\tau_1 - \tau_0} ] < \infty$. Hence, there exists $r > 1$
such that $\mathbb{E} [r^{\sigma_1} ] < \infty$ and
$\mathbb{E} [r^{\tilde{\sigma}} ] < \infty$. Moreover,
\[
\mathbb{E} \bigl[r^{\sum_{i=1}^G \sigma_i} \bigr] = \frac{r q}{1
- (1 - q) \mathbb{E} [r^{\sigma_1} ]}.
\]
Now $\mathbb{E} [r^{\sigma_1} ] \to1$ as $r \downarrow
1$, and so we can find $r > 1$ sufficiently small that $\mathbb{E}
[r^{\sigma_1} ] < (1-q)^{-1}$. Hence, for such a value of $r$,
\[
\mathbb{E} \bigl[r^{\sum_{i=1}^G \sigma_i} \bigr] < \infty.
\]
The result follows.
\end{pf}

Recall from Lemma~\ref{lemFosterLyap} the Foster--Lyapunov criterion
for the Markov chain $(Z_k)_{k \ge0}$: there exist a function $V\dvtx
(0,\infty) \to[1, \infty)$, a small set $C$ and constants $\beta\in
(0,1)$ and $b > 0$ such that
\[
\mathbb{E} \bigl[V(Z_1) \mid Z_0 = x \bigr] \le(1 -
\beta) V(x) + b \mathbh{1}_{ \{ x \in C \} }.
\]
Since $C$ is small, there exist $p \in(0,1)$ and a probability measure
$\tilde{\mu}_C$ [which is a version of the measure $\mu_C$ given
explicitly at (\ref{eqnboundingmeasure}) normalized to have total
mass~1] such that
\[
P(x, B) = \mathbb{P} (Z_1 \in B \mid Z_0 \in x ) \ge p
\tilde{\mu}_C(B)
\]
for all $x \in C$ and any $B$ any Borel subset of $(0,\infty)$.
Consider now constructing the process $(Z_k)_{k \ge0}$ via the
standard \emph{split chain construction}: whenever $Z_k \in C$, we
flip a coin with probability $p \in(0,1)$. If the coin comes up heads,
we sample $Z_{k+1}$ from the measure $\tilde{\mu}_C$. Otherwise,
sample $Z_{k+1}$ from the probability measure $(P(Z_k, \cdot) - p
\tilde{\mu}_C(\cdot))/(1 - p)$. If $Z_k \notin C$, we simply sample
$Z_{k+1}$ from $P(Z_k, \cdot)$. If $Z_k \in C$ and the coin comes up
heads, we say that there is a \emph{regeneration} at time~$k$. (In
particular, a regeneration can only occur at $k$ if $Z_k \in C$.) Let
\[
\tau_0 = \inf\{i \ge0\dvtx  \mbox{there is a regeneration at $i$}\}
\]
and for $k \ge0$,
\[
\tau_{k+1} = \inf\{i > \tau_k\dvtx  \mbox{there is a
regeneration at $i$}\}.
\]
Then $\tau_0$ and $\{\tau_{k+1} - \tau_k\dvtx  k \ge0\}$ are all
independent, and $\{\tau_{k+1} - \tau_k\dvtx  k \ge0\}$ are identically
distributed. Hence, $N(n):= \#\{k \ge1\dvtx  \tau_k \le n\}$ is a delayed
renewal process.

The following lemma is a standard consequence of geometric ergodicity;
see, for example, equation (22) of Roberts and Rosenthal~\cite
{RobertsRosenthal} for the precise formulation given here.

\begin{lem} \label{lemregen}
There exists $\theta> 1$ such that
\[
\int_0^{\infty} \mathbb{E} \bigl[
\theta^{\tau_0} \mid Z_0 = x \bigr] \pi_{\mathrm{stat}}(\mathrm
d x) < \infty\quad\mbox{and}\quad\mathbb{E} \bigl[\theta^{\tau_1 -
\tau_0} \bigr]
< \infty.
\]
\end{lem}

Hence, if the chain is begun in stationarity, $(N(n))_{n \ge0}$ is a
delayed renewal process such that both delay and inter-arrival
distributions have exponential tails.

\begin{pf*}{Proof of Lemma~\ref{lemexptails}}
Let $f\dvtx  (0,\infty)^2 \to(0,1)$ be defined by
\[
f(x,y)= \mathbb{E} \bigl[Y_1^{\alpha} \mid Z_0 =
x, Z_1 = y \bigr].
\]
Using the fact that $(Z_n)_{n \ge0}$ acts a driving chain for $(Z_n,
Y_n)_{n \ge0}$, we have that $Y_1, Y_2, \ldots, Y_n$ are
conditionally independent given $Z_0, Z_1, \ldots, Z_n$ and, for $1
\le i \le n$, the distribution of $Y_i$ depends only on the values of
$Z_{i-1}$ and $Z_{i}$. Hence, for all $x>0$,
\[
\mathbb{E} \Biggl[\prod_{i=1}^n
Y_i^{\alpha} \bigg| Z_0 = x \Biggr] = \mathbb{E}
\Biggl[\prod_{i=1}^n f(Z_{i-1},
Z_i) \bigg| Z_0 = x \Biggr]
\]
and, therefore,
\[
\mathbb{E} \Biggl[\prod_{i=1}^n
\bigl(Y^{\mathrm{stat}}_i\bigr)^{\alpha} \Biggr] = \mathbb{E}
\Biggl[\prod_{i=1}^n f
\bigl(Z^{\mathrm
{stat}}_{i-1}, Z^{\mathrm{stat}}_i\bigr)
\Biggr]. %
\]
The function $f$ takes values in $(0,1)$ and is continuous, so for any
compact set $K \subseteq(0,\infty)^2$ we can find a constant $\gamma
\in(0,1)$ such that $f(x,y) \le\gamma$ on $K$. Take $K = K_1 \times
K_2$, where $K_1, K_2 \subseteq(0,\infty)$ are compact and have
strictly positive Lebesgue measure. Let $\tilde{N}(n) = \#\{1 \le i
\le n\dvtx  (Z^{\mathrm{stat}}_{i-1}, Z^{\mathrm{stat}}_i) \in K\}$. Then
\[
\mathbb{E} \Biggl[\prod_{i=1}^n
\bigl(Y_i^{\mathrm{stat}}\bigr)^{\alpha} \Biggr] \le\mathbb{E}
\bigl[\gamma^{\tilde{N}(n)} \bigr].
\]
We will bound $\tilde{N}(n)$ below by the number of renewals between
which there is a visit to $K$, that is,
\[
M(n) = \#\bigl\{k \ge1\dvtx  \tau_k \le n, \bigl(Z^{\mathrm{stat}}_{i-1},
Z^{\mathrm{stat}}_i\bigr) \in K\mbox{ for some } \tau_{k-1}
+ 1 < i \le\tau_k\bigr\}.
\]
This clearly has the effect of independently marking the renewal
intervals, as at~(\ref{eqnmarks}). Note that since $P(x, B) > 0$ for
any $x \in(0,\infty)$ and any Borel set $B \subseteq(0,\infty)$ of
positive Lebesgue measure, there is positive probability of visiting
$K$ between any two renewals. The result then follows from Lemmas~\ref
{lemmarkedrenewal} and~\ref{lemregen}.
\end{pf*}
\end{appendix}

\section*{Acknowledgments}
We would like to thank Jon Warren for very stimulating discussions
about invariant measures and fragmentations. We are grateful to the
referees for their careful reading of the paper and for their comments
which led to several improvements.


%

\printaddresses

\begin{thebibliography}{28}
\bibitem{AldousShields}
%
\begin{barticle}[mr]
\bauthor{\bsnm{Aldous},~\bfnm{David}\binits{D.}} \AND
\bauthor{\bsnm{Shields},~\bfnm{Paul}\binits{P.}}
(\byear{1988}).
\btitle{A diffusion limit for a class of randomly-growing binary trees}.
\bjournal{Probab. Theory Related Fields}
\bvolume{79}
\bpages{509--542}.
\bid{doi={10.1007/BF00318784}, issn={0178-8051}, mr={0966174}}
\end{barticle}
%
\bptok{imsref}%
\endbibitem

\bibitem{AldousBandy}
%
\begin{barticle}[mr]
\bauthor{\bsnm{Aldous},~\bfnm{David~J.}\binits{D.~J.}} \AND
\bauthor{\bsnm{Bandyopadhyay},~\bfnm{Antar}\binits{A.}}
(\byear{2005}).
\btitle{A survey of max-type recursive distributional equations}.
\bjournal{Ann. Appl. Probab.}
\bvolume{15}
\bpages{1047--1110}.
\bid{doi={10.1214/105051605000000142}, issn={1050-5164}, mr={2134098}}
\end{barticle}
%
\bptok{imsref}%
\endbibitem

\bibitem{Alsmeyer2}
%
\begin{barticle}[mr]
\bauthor{\bsnm{Alsmeyer},~\bfnm{Gerold}\binits{G.}}
(\byear{1994}).
\btitle{On the {M}arkov renewal theorem}.
\bjournal{Stochastic Process. Appl.}
\bvolume{50}
\bpages{37--56}.
\bid{doi={10.1016/0304-4149(94)90146-5}, issn={0304-4149}, mr={1262329}}
\end{barticle}
%
\bptok{imsref}%
\endbibitem

\bibitem{Alsmeyer}
%
\begin{barticle}[mr]
\bauthor{\bsnm{Alsmeyer},~\bfnm{G.}\binits{G.}}
(\byear{1997}).
\btitle{The {M}arkov renewal theorem and related results}.
\bjournal{Markov Process. Related Fields}
\bvolume{3}
\bpages{103--127}.
\bid{issn={1024-2953}, mr={1446921}}
\end{barticle}
%
\bptok{imsref}%
\endbibitem

\bibitem{Athreya}
%
\begin{barticle}[mr]
\bauthor{\bsnm{Athreya},~\bfnm{K.~B.}\binits{K.~B.}}
(\byear{1985}).
\btitle{Discounted branching random walks}.
\bjournal{Adv. in Appl. Probab.}
\bvolume{17}
\bpages{53--66}.
\bid{doi={10.2307/1427052}, issn={0001-8678}, mr={0778593}}
\end{barticle}
%
\bptok{imsref}%
\endbibitem

\bibitem{AthreyaRen}
%
\begin{barticle}[mr]
\bauthor{\bsnm{Athreya},~\bfnm{K.~B.}\binits{K.~B.}},
\bauthor{\bsnm{McDonald},~\bfnm{D.}\binits{D.}} \AND
\bauthor{\bsnm{Ney},~\bfnm{P.}\binits{P.}}
(\byear{1978}).
\btitle{Limit theorems for semi-{M}arkov processes and renewal theory
for {M}arkov chains}.
\bjournal{Ann. Probab.}
\bvolume{6}
\bpages{788--797}.
\bid{issn={0091-1798}, mr={0503952}}
\end{barticle}
%
\bptok{imsref}%
\endbibitem

\bibitem{BPP}
%
\begin{barticle}[mr]
\bauthor{\bsnm{Barlow},~\bfnm{Martin~T.}\binits{M.~T.}},
\bauthor{\bsnm{Pemantle},~\bfnm{Robin}\binits{R.}} \AND
\bauthor{\bsnm{Perkins},~\bfnm{Edwin~A.}\binits{E.~A.}}
(\byear{1997}).
\btitle{Diffusion-limited aggregation on a tree}.
\bjournal{Probab. Theory Related Fields}
\bvolume{107}
\bpages{1--60}.
\bid{doi={10.1007/s004400050076}, issn={0178-8051}, mr={1427716}}
\end{barticle}
%
\bptok{imsref}%
\endbibitem

\bibitem{BertoinHom}
%
\begin{barticle}[mr]
\bauthor{\bsnm{Bertoin},~\bfnm{Jean}\binits{J.}}
(\byear{2001}).
\btitle{Homogeneous fragmentation processes}.
\bjournal{Probab. Theory Related Fields}
\bvolume{121}
\bpages{301--318}.
\bid{doi={10.1007/s004400100152}, issn={0178-8051}, mr={1867425}}
\end{barticle}
%
\bptok{imsref}%
\endbibitem

\bibitem{BertoinSSF}
%
\begin{barticle}[mr]
\bauthor{\bsnm{Bertoin},~\bfnm{Jean}\binits{J.}}
(\byear{2002}).
\btitle{Self-similar fragmentations}.
\bjournal{Ann. Inst. Henri Poincar\'e Probab. Stat.}
\bvolume{38}
\bpages{319--340}.
\bid{doi={10.1016/S0246-0203(00)01073-6}, issn={0246-0203}, mr={1899456}}
\end{barticle}
%
\bptok{imsref}%
\endbibitem

\bibitem{BertoinAB}
%
\begin{barticle}[mr]
\bauthor{\bsnm{Bertoin},~\bfnm{Jean}\binits{J.}}
(\byear{2003}).
\btitle{The asymptotic behavior of fragmentation processes}.
\bjournal{J. Eur. Math. Soc. (JEMS)}
\bvolume{5}
\bpages{395--416}.
\bid{doi={10.1007/s10097-003-0055-3}, issn={1435-9855}, mr={2017852}}
\end{barticle}
%
\bptok{imsref}%
\endbibitem

\bibitem{BertoinBook}
%
\begin{bbook}[mr]
\bauthor{\bsnm{Bertoin},~\bfnm{Jean}\binits{J.}}
(\byear{2006}).
\btitle{Random Fragmentation and Coagulation Processes}.
\bseries{Cambridge Studies in Advanced Mathematics}
\bvolume{102}.
\bpublisher{Cambridge Univ. Press},
\blocation{Cambridge}.
\bid{doi={10.1017/CBO9780511617768}, mr={2253162}}
\end{bbook}
%
\bptok{imsref}%
\endbibitem

\bibitem{DeanMajumdar}
%
\begin{barticle}[mr]
\bauthor{\bsnm{Dean},~\bfnm{David~S.}\binits{D.~S.}} \AND
\bauthor{\bsnm{Majumdar},~\bfnm{Satya~N.}\binits{S.~N.}}
(\byear{2006}).
\btitle{Phase transition in a generalized {E}den growth model on a tree}.
\bjournal{J. Stat. Phys.}
\bvolume{124}
\bpages{1351--1376}.
\bid{doi={10.1007/s10955-006-9193-9}, issn={0022-4715}, mr={2266447}}
\end{barticle}
%
\bptok{imsref}%
\endbibitem

\bibitem{Devroye}
%
\begin{barticle}[mr]
\bauthor{\bsnm{Devroye},~\bfnm{Luc}\binits{L.}}
(\byear{1986}).
\btitle{A note on the height of binary search trees}.
\bjournal{J. Assoc. Comput. Mach.}
\bvolume{33}
\bpages{489--498}.
\bid{doi={10.1145/5925.5930}, issn={0004-5411}, mr={0849025}}
\end{barticle}
%
\bptok{imsref}%
\endbibitem

\bibitem{DLG02}
%
\begin{barticle}[mr]
\bauthor{\bsnm{Duquesne},~\bfnm{Thomas}\binits{T.}} \AND
\bauthor{\bsnm{Le Gall},~\bfnm{Jean-Fran{\c{c}}ois}\binits{J.-F.}}
(\byear{2002}).
\btitle{Random trees, L\'evy processes and spatial branching processes}.
\bjournal{Ast\'erisque}
\bvolume{281}
\bpages{vi+147}.
\bid{issn={0303-1179}, mr={1954248}}
\end{barticle}
%
\bptok{imsref}%
\endbibitem

\bibitem{DLG05}
%
\begin{barticle}[mr]
\bauthor{\bsnm{Duquesne},~\bfnm{Thomas}\binits{T.}} \AND
\bauthor{\bsnm{Le Gall},~\bfnm{Jean-Fran{\c{c}}ois}\binits{J.-F.}}
(\byear{2005}).
\btitle{Probabilistic and fractal aspects of L\'evy trees}.
\bjournal{Probab. Theory Related Fields}
\bvolume{131}
\bpages{553--603}.
\bid{doi={10.1007/s00440-004-0385-4}, issn={0178-8051}, mr={2147221}}
\end{barticle}
%
\bptok{imsref}%
\endbibitem

\bibitem{EK}
%
\begin{bbook}[mr]
\bauthor{\bsnm{Ethier},~\bfnm{Stewart~N.}\binits{S.~N.}} \AND
\bauthor{\bsnm{Kurtz},~\bfnm{Thomas~G.}\binits{T.~G.}}
(\byear{1986}).
\btitle{Markov Processes: Characterization and Convergence}.
\bpublisher{Wiley},
\blocation{New York}.
\bid{doi={10.1002/9780470316658}, mr={0838085}}
\end{bbook}
%
\bptok{imsref}%
\endbibitem

\bibitem{Filippov}
%
\begin{barticle}[author]
\bauthor{\bsnm{Filippov},~\bfnm{A.~F.}\binits{A.~F.}}
(\byear{1961}).
\btitle{On the distribution of the sizes of particles which undergo splitting}.
\bjournal{Theory Probab. Appl.}
\bvolume{6}
\bpages{275--294}.
\end{barticle}
%
\bptok{imsref}%
\endbibitem

\bibitem{GoldschmidtHaas}
%
\begin{barticle}[mr]
\bauthor{\bsnm{Goldschmidt},~\bfnm{Christina}\binits{C.}} \AND
\bauthor{\bsnm{Haas},~\bfnm{B{\'e}n{\'e}dicte}\binits{B.}}
(\byear{2010}).
\btitle{Behavior near the extinction time in self-similar
fragmentations. I. {T}he stable case}.
\bjournal{Ann. Inst. Henri Poincar\'e Probab. Stat.}
\bvolume{46}
\bpages{338--368}.
\bid{doi={10.1214/09-AIHP317}, issn={0246-0203}, mr={2667702}}
\end{barticle}
%
\bptok{imsref}%
\endbibitem

\bibitem{HaasLossMass}
%
\begin{barticle}[mr]
\bauthor{\bsnm{Haas},~\bfnm{B{\'e}n{\'e}dicte}\binits{B.}}
(\byear{2003}).
\btitle{Loss of mass in deterministic and random fragmentations}.
\bjournal{Stochastic Process. Appl.}
\bvolume{106}
\bpages{245--277}.
\bid{doi={10.1016/S0304-4149(03)00045-0}, issn={0304-4149}, mr={1989629}}
\end{barticle}
%
\bptok{imsref}%
\endbibitem

\bibitem{HaasRegularity}
%
\begin{barticle}[mr]
\bauthor{\bsnm{Haas},~\bfnm{B{\'e}n{\'e}dicte}\binits{B.}}
(\byear{2004}).
\btitle{Regularity of formation of dust in self-similar fragmentations}.
\bjournal{Ann. Inst. Henri Poincar\'e Probab. Stat.}
\bvolume{40}
\bpages{411--438}.
\bid{doi={10.1016/j.anihpb.2003.11.002}, issn={0246-0203}, mr={2070333}}
\end{barticle}
%
\bptok{imsref}%
\endbibitem

\bibitem{JacodRen}
%
\begin{barticle}[mr]
\bauthor{\bsnm{Jacod},~\bfnm{J.}\binits{J.}}
(\byear{1971}).
\btitle{Th\'eor\`eme de renouvellement et classification pour les cha\^
{\i}nes semi-markoviennes}.
\bjournal{Ann. Inst. H. Poincar\'e Sect. B (N.S.)}
\bvolume{7}
\bpages{83--129}.
\bid{mr={0305496}}
\end{barticle}
%
\bptok{imsref}%
\endbibitem

\bibitem{KestenRen}
%
\begin{barticle}[mr]
\bauthor{\bsnm{Kesten},~\bfnm{Harry}\binits{H.}}
(\byear{1974}).
\btitle{Renewal theory for functionals of a {M}arkov chain with
general state space}.
\bjournal{Ann. Probab.}
\bvolume{2}
\bpages{355--386}.
\bid{mr={0365740}}
\end{barticle}
%
\bptok{imsref}%
\endbibitem

\bibitem{MeynTweedie}
%
\begin{bbook}[mr]
\bauthor{\bsnm{Meyn},~\bfnm{S.~P.}\binits{S.~P.}} \AND
\bauthor{\bsnm{Tweedie},~\bfnm{R.~L.}\binits{R.~L.}}
(\byear{1993}).
\btitle{Markov Chains and Stochastic Stability}.
\bpublisher{Springer London},
\blocation{London}.
\bid{doi={10.1007/978-1-4471-3267-7}, mr={1287609}}
\end{bbook}
%
\bptok{imsref}%
\endbibitem

\bibitem{Miermont}
%
\begin{barticle}[mr]
\bauthor{\bsnm{Miermont},~\bfnm{Gr{\'e}gory}\binits{G.}}
(\byear{2003}).
\btitle{Self-similar fragmentations derived from the stable tree. I.
{S}plitting at heights}.
\bjournal{Probab. Theory Related Fields}
\bvolume{127}
\bpages{423--454}.
\bid{doi={10.1007/s00440-003-0295-x}, issn={0178-8051}, mr={2018924}}
\end{barticle}
%
\bptok{imsref}%
\endbibitem

\bibitem{OreyRen}
%
\begin{barticle}[mr]
\bauthor{\bsnm{Orey},~\bfnm{Steven}\binits{S.}}
(\byear{1961}).
\btitle{Change of time scale for {M}arkov processes}.
\bjournal{Trans. Amer. Math. Soc.}
\bvolume{99}
\bpages{384--397}.
\bid{issn={0002-9947}, mr={0145586}}
\end{barticle}
%
\bptok{imsref}%
\endbibitem

\bibitem{Pittel}
%
\begin{barticle}[mr]
\bauthor{\bsnm{Pittel},~\bfnm{Boris}\binits{B.}}
(\byear{1984}).
\btitle{On growing random binary trees}.
\bjournal{J. Math. Anal. Appl.}
\bvolume{103}
\bpages{461--480}.
\bid{doi={10.1016/0022-247X(84)90141-0}, issn={0022-247X}, mr={0762569}}
\end{barticle}
%
\bptok{imsref}%
\endbibitem

\bibitem{RobertsRosenthal}
%
\begin{barticle}[mr]
\bauthor{\bsnm{Roberts},~\bfnm{Gareth~O.}\binits{G.~O.}} \AND
\bauthor{\bsnm{Rosenthal},~\bfnm{Jeffrey~S.}\binits{J.~S.}}
(\byear{2004}).
\btitle{General state space {M}arkov chains and MCMC algorithms}.
\bjournal{Probab. Surv.}
\bvolume{1}
\bpages{20--71}.
\bid{doi={10.1214/154957804100000024}, issn={1549-5787}, mr={2095565}}
\end{barticle}
%
\bptok{imsref}%
\endbibitem

\bibitem{ShurenkovRen}
%
\begin{barticle}[author]
\bauthor{\bsnm{Shurenkov},~\bfnm{V.}\binits{V.}}
(\byear{1985}).
\btitle{On the theory of {M}arkov renewal}.
\bjournal{Theory Probab. Appl.}
\bvolume{29}
\bpages{247--265}.
\end{barticle}
%
\bptok{imsref}%
\endbibitem
\end{thebibliography}
\end{document}